
\documentclass[12pt]{amsart}
\usepackage{graphicx}
\usepackage[all]{xy}
\ExecuteOptions{usename}
\hoffset=-1.5cm
\setlength{\textwidth}{15.3cm}
\setlength{\textheight}{23.8cm}
\setlength{\topmargin}{0cm}

\newtheorem{theorem}{Theorem}[section]
\newtheorem{proposition}[theorem]{Proposition}
\newtheorem{lemma}[theorem]{Lemma}

\newtheorem{corollary}[theorem]{Corollary}



\theoremstyle{definition}
\newtheorem{definition}[theorem]{Definition}
\newtheorem{example}[theorem]{Example}

\theoremstyle{remark}
\newtheorem{remark}[theorem]{Remark}

\numberwithin{equation}{section}

\begin{document}


\title[Homology cylinders and sutured manifolds for knots]
{Homology cylinders and sutured manifolds for homologically fibered knots}
\date{\today}

\subjclass[2000]{Primary 57M27, Secondary 57M05, 57M25}

\author{Hiroshi Goda}
\address{Department of Mathematics,
Tokyo University of Agriculture and Technology,
2-24-16 Naka-cho, Koganei,
Tokyo 184-8588, Japan}
\email{goda@cc.tuat.ac.jp}
\author{Takuya Sakasai}
\address{Graduate School of Mathematical Sciences, 
The University of Tokyo, 
3-8-1 Komaba Meguro-ku Tokyo 153-8914, Japan.}
\email{sakasai@ms.u-tokyo.ac.jp}


\dedicatory{Dedicated to Professor Akio Kawauchi on the occasion 
of his 60th birthday}

\keywords{Homology cylinder, homologically fibered knot, sutured manifold,  
Magnus representation, Alexander polynomial, torsion, Nakanishi index}

\begin{abstract}
Sutured manifolds defined by 
Gabai are useful in the geometrical study of 
knots and 3-dimensional manifolds. 
On the other hand, homology cylinders are in an important 
position in the recent theory of 
homology cobordisms of surfaces and finite-type invariants. 
We study a relationship between them by focusing on 
sutured manifolds associated with a special class of knots 
which we call {\it homologically fibered knots}. 
Then we use invariants of homology cylinders 
to give applications to knot theory such as fibering obstructions, 
Reidemeister torsions and handle numbers of 
homologically fibered knots. 
\end{abstract}

\maketitle

\section{Introduction}\label{sec:intro}

In the theory of knots and 3-manifolds, sutured manifolds play an important 
role. They were defined by Gabai \cite{gabai1} and are used to construct 
taut foliations on 3-manifolds. 
To each knot in the 3-sphere $S^3$ with a Seifert surface $R$, 
a sutured manifold $(M,\gamma)$ called 
the {\it complementary sutured manifold} for $R$ is obtained 
by cutting the knot complement along $R$ 
with the resulting cobordism $M$ between two copies of $R$. 
Using taut foliations on complementary 
sutured manifolds, Gabai settled, for example, 
Property R conjecture \cite{gabai3}. 

On the other hand, {\it homology cylinders}, each of which 
consists of a homology cobordism $M$ between two copies of a compact surface 
and markings of both sides of the boundary of $M$ 
(see Section \ref{section:cylinder} for details), 
appeared in the context of the theory of finite type 
invariants for 3-manifolds. 
The set of homology cylinders over a surface 
has a natural monoid structure. 
Goussarov \cite{gou}, 
Habiro \cite{habiro}, Garoufalidis-Levine \cite{gl} and 
Levine \cite{levin} studied it systematically 
by using the clasper (or clover) surgery theory. 

Since both sutured manifolds and homology cylinders deal with 
cobordisms between surfaces, 
it is natural to observe their precise relationship. 
In this paper, we first give a specific answer 
by restricting sutured manifolds to 
those obtained from knots. That is, we 
discuss which knot and its Seifert surface 
define a homology cylinder as a complementary sutured manifold 
and conclude in Section \ref{section:fibre} that 
such a case occurs exactly when we 
take a knot with a minimal genus Seifert surface 
whose Alexander polynomial is monic and has degree 
twice the genus of the knot 
(see Proposition \ref{thm:homologicalfibre}, 
where the cases of links are also discussed).  
We call such a knot a {\it homologically fibered knot}. 
Several examples of homologically fibered knots are 
presented in the same section. 

It is well known that fibered knots satisfy the above conditions for 
homologically fibered knots. In fact, 
they define homology cylinders with the 
product cobordism on a surface with markings 
(called {\it monodromies} in the theory of fibered knots). 
On the other hand, 
interesting examples of homologically fibered knots come from 
non-fibered knots. 
They give homology cylinders whose underlying cobordisms 
are not product. 
To construct such homology cylinders, it has been known the following 
methods: 
\begin{itemize}
\item connected sums of the trivial cobordism with homology 3-spheres; 
\item Levine's method \cite[Section 3]{levin} 
using string links in the 3-ball;
\item Habegger's method \cite{habegger} giving homology cylinders as 
results of surgeries along string links in homology 3-balls; and 
\item clasper surgeries (see \cite{gou} and \cite{habiro}).
\end{itemize}
It was shown that each of the latter two methods 
(together with changes of markings) 
give all homology cylinders. However those methods need 
surgeries along links with multiple components, 
so that it seems slightly difficult to 
imagine the resulting manifolds. 
Our result in Section \ref{section:fibre} 
shall provide an {\it explicit} 
construction of homology cylinders. 

The above mentioned relationship between sutured manifolds 
and homology cylinders will be studied further 
in the latter half of this paper. 
We apply invariants of homology cylinders 
defined in \cite{sakasai} to homologically 
fibered knots. In particular, 
we focus on Magnus representations and 
Reidemeister torsions of homology cylinders, whose 
definitions are recalled in Section \ref{sec:magnus}. 
The definitions will be given in such a general form that we can 
apply frameworks of Cochran-Orr-Teichner's theory \cite{cot} of 
higher-order Alexander modules and Friedl's theory \cite{fri} 
of noncommutative Reidemeister torsions. 
As an immediate application, 
it turns out that they give fibering obstructions of homologically 
fibered knots. We also use them to derive 
factorization formulas of Reidemeister torsions of the exterior of 
a homologically fibered knot in Section \ref{sec:factorization}. 

More applications are given in Sections \ref{section:handle} 
and \ref{sec:nakanishi}. 
We consider {\it handle numbers} of 
sutured manifolds, which 
may be regarded as 
an analogue of the Heegaard genus of a closed 3-manifold 
for a sutured manifold. See \cite{goda1,goda2} for details. 
We discuss lower estimates 
of handle numbers by using the above mentioned invariants of 
homology cylinders. 
In particular, 
we consider doubled knots with certain Seifert surfaces and 
give a lower bound of their handle numbers by using 
Nakanishi index \cite{kawauchi2}. 

Conversely, an application of homologically fibered knots 
to homology cylinders is given in \cite{gs09}, where 
we discuss abelian quotients of monoids of homology cylinders.

The authors would like to thank Professor Yasutaka Nakanishi 
for his helpful comments. They also would like to thank 
Professor Gw\'ena\"el Massuyeau for his careful reading of the 
previous version of this paper and useful comments. 
The authors are partially supported
by Grant-in-Aid for Scientific Research,
(No.~18540072 and No.~19840009),
Ministry of Education, Science, 
Sports and Technology, Japan. 
The second author is also supported by 
21st century COE program at Graduate School of Mathematical
Sciences, The University of Tokyo.


\section{Homology cylinders and sutured manifolds}\label{section:cylinder}

In this section, we introduce two main objects in this paper: 
homology cylinders and sutured manifolds. 
First, we define homology cylinders over surfaces, 
which have their origin in 
following Goussarov \cite{gou}, Habiro \cite{habiro}, 
Garoufalidis-Levine \cite{gl} and Levine \cite{levin}. 
Let $\Sigma_{g,n}$ be a compact connected 
oriented surface of genus $g\ge 0$ with $n \ge 1$ 
boundary components.

\begin{definition}
A {\it homology cylinder\/}  $(M,i_{+},i_{-})$ over $\Sigma_{g,n}$ 
consists of a compact oriented 3-manifold $M$ 
with two embeddings $i_{+}, i_{-}: \Sigma_{g,n} \hookrightarrow \partial M$ 
such that:
\begin{enumerate}
\renewcommand{\labelenumi}{(\roman{enumi})}
\item
$i_{+}$ is orientation-preserving and $i_{-}$ is orientation-reversing; 
\item 
$\partial M=i_{+}(\Sigma_{g,n})\cup i_{-}(\Sigma_{g,n})$ and 
$i_{+}(\Sigma_{g,n})\cap i_{-}(\Sigma_{g,n})
=i_{+}(\partial\Sigma_{g,n})=i_{-}(\partial\Sigma_{g,n})$;
\item
$i_{+}|_{\partial \Sigma_{g,n}}=i_{-}|_{\partial \Sigma_{g,n}}$; and 
\item
$i_{+},i_{-} : H_{*}(\Sigma_{g,n})\to H_{*}(M)$ 
are isomorphisms. 
\end{enumerate}
If we replace (iv) with the condition that
$i_{+},i_{-} : H_{*}(\Sigma_{g,n};\mathbb Q)\to H_{*}(M;\mathbb Q)$ 
are isomorphisms, 
then $(M,i_{+},i_{-})$ is called a {\it rational homology cylinder\/}. 
\end{definition}
\noindent
We often write a (rational) homology cylinder $(M,i_+,i_-)$ briefly by $M$. 
Precisely speaking, our definition is the same as that 
in \cite{gl} and \cite{levin} except that 
we may consider homology cylinders over surfaces 
with multiple boundaries. 

Two (rational) homology cylinders $(M,i_+,i_-)$ and $(N,j_+,j_-)$ 
over $\Sigma_{g,n}$ are said to be {\it isomorphic} if there exists 
an orientation-preserving diffeomorphism $f:M \xrightarrow{\cong} N$ 
satisfying $j_+ = f \circ i_+$ and $j_- = f \circ i_-$. 
We denote the set of isomorphism classes of homology cylinders 
(resp.~rational homology cylinders) over 
$\Sigma_{g,n}$ by $\mathcal{C}_{g,n}$ 
(resp.~$\mathcal{C}_{g,n}^\mathbb{Q}$). 

\begin{example}\label{ex:mgtocg}
For each diffeomorphism $\varphi$ of 
$\Sigma_{g,n}$ which fixes $\partial \Sigma_{g,n}$ pointwise 
(hence, $\varphi$ preserves the orientation of $\Sigma_{g,n}$), 
we can construct a homology cylinder 
by setting 
\[(\Sigma_{g,n} \times [0,1], \mathrm{id} \times 1, 
\varphi \times 0),\]
where collars of $i_+ (\Sigma_{g,n})$ and $i_- (\Sigma_{g,n})$ 
are stretched half-way along $(\partial \Sigma_{g,n}) \times [0,1]$. 
It is easily checked that the isomorphism class of 
$(\Sigma_{g,n} \times [0,1], \mathrm{id} \times 1, \varphi \times 0)$ 
depends only on the (boundary fixing) 
isotopy class of $\varphi$. Therefore, 
this construction gives a map from the mapping class group 
$\mathcal{M}_{g,n}$ of $\Sigma_{g,n}$ to $\mathcal{C}_{g,n}$. 
\end{example}

Given two (rational) homology cylinders $M=(M,i_+,i_-)$ and 
$N=(N,j_+,j_-)$ over $\Sigma_{g,n}$, we can construct a new one 
defined by 
\[M \cdot N :=(M \cup_{i_- \circ (j_+)^{-1}} N, i_+,j_-).\]
By this operation, $\mathcal{C}_{g,n}$ and $\mathcal{C}_{g,n}^\mathbb{Q}$ 
become monoids with the unit 
$(\Sigma_{g,n} \times [0,1], \mathrm{id} \times 1, \mathrm{id} \times 0)$. 
The map $\mathcal{M}_{g,n} \to \mathcal{C}_{g,n}$ 
in Example \ref{ex:mgtocg} is seen to be a monoid homomorphism. 

By definition, we can define a homomorphism 
$\sigma:\mathcal{C}_{g,n} \to \mathrm{Aut} (H_1 (\Sigma_{g,n}))$ by 
\[\sigma(M,i_+,i_-) := i_+^{-1} \circ i_- \in 
\mathrm{Aut} (H_1 (\Sigma_{g,n})),\]
where $i_+$ and $i_-$ in the right hand side are the induced maps 
on the first homology. 
Note that the composition 
\[\mathcal{M}_{g,n} \xrightarrow{\mathrm{Example}\ \ref{ex:mgtocg}} 
\mathcal{C}_{g,n} \xrightarrow{\ \sigma\ } 
\mathrm{Aut} (H_1 (\Sigma_{g,n}))\]
is just the map obtained as the natural action of 
$\mathcal{M}_{g,n}$ on $H_1 (\Sigma_{g,n})$. 
For rational homology cylinders, 
we have a similar homomorphism 
\[\sigma :\mathcal{C}_{g,n}^\mathbb{Q} \to \mathrm{Aut} 
(H_1 (\Sigma_{g,n};\mathbb{Q})).\] 

The following facts seem to be well known at least for $n=1$ 
(see \cite[Section 2.4]{gl} and \cite[Section 2.1]{levin}). 
However, here we give a direct and topological proof of them. 

\begin{proposition}\label{prop:monodromy}
$(1)$ The homomorphism $\mathcal{M}_{g,n} \to \mathcal{C}_{g,n}$ 
in Example $\ref{ex:mgtocg}$ is injective. 

\begin{itemize}
\item[$(2)$] For each $(M,i_+,i_-) \in \mathcal{C}_{g,n}$, 
the automorphism $\sigma(M) := \sigma(M,$ $i_+, i_-)$ preserves 
the intersection pairing on $H_1 (\Sigma_{g,n})$. 
$($A similar statement obtained by 
replacing $H_1 (\Sigma_{g,n})$ with 
$H_1 (\Sigma_{g,n};\mathbb{Q})$ holds for rational homology cylinders.$)$
\end{itemize}
\end{proposition}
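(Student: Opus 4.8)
The plan is to prove both statements by a single homological argument exploiting the fact that $M$ is a homology cobordism between two copies of $\Sigma_{g,n}$. For part $(1)$, suppose $\varphi$ is a boundary-fixing diffeomorphism of $\Sigma_{g,n}$ such that the homology cylinder $(\Sigma_{g,n}\times[0,1],\mathrm{id}\times 1,\varphi\times 0)$ is isomorphic to the unit $(\Sigma_{g,n}\times[0,1],\mathrm{id}\times 1,\mathrm{id}\times 0)$ via an orientation-preserving diffeomorphism $f$ of $\Sigma_{g,n}\times[0,1]$. The two conditions $j_+=f\circ i_+$ and $j_-=f\circ i_-$ say that $f$ restricts to the identity on $\Sigma_{g,n}\times 1$ and to $\varphi\times 0\mapsto \mathrm{id}\times 0$, i.e. $f|_{\Sigma_{g,n}\times 0}=\varphi^{-1}\times 0$ after the obvious identifications. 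Thus $f$ gives an isotopy (through the product structure, sliding along the $[0,1]$ direction) realizing $\varphi$ as isotopic to $\mathrm{id}$ relative to $\partial\Sigma_{g,n}$: concretely, $t\mapsto f|_{\Sigma_{g,n}\times t}$ is a path of diffeomorphisms of $\Sigma_{g,n}$ from $\mathrm{id}$ to $\varphi^{-1}$ fixing the boundary, so $[\varphi]=1$ in $\mathcal{M}_{g,n}$. The only subtlety is that $f$ need not preserve the product structure, so one should first isotope $f$ so that it sends each slice $\Sigma_{g,n}\times t$ to a slice; this is a standard application of the uniqueness of collar neighborhoods together with the fact that $\Sigma_{g,n}\times[0,1]$ fibers over $[0,1]$, and I would invoke it rather than belabor it.

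For part $(2)$, the key input is Poincar\'e–Lefschetz duality for the compact oriented $3$-manifold $M$ together with condition (iv) in the definition of homology cylinder. Write $\Sigma=\Sigma_{g,n}$ and let $\langle\,,\,\rangle_\Sigma$ denote the intersection pairing on $H_1(\Sigma)$, which is the composite $H_1(\Sigma)\otimes H_1(\Sigma)\to H_1(\Sigma)\otimes H_1(\Sigma,\partial\Sigma)\to \mathbb{Z}$ induced by Poincar\'e–Lefschetz duality on the surface. The intersection pairing on $\Sigma$ factors through the cobordism: for $x,y\in H_1(\Sigma)$, the classes $i_+{}_*x$ and $i_-{}_*y$ in $H_1(M)$ can be paired using the duality isomorphism $H_1(M)\cong H^2(M,\partial M)$ and evaluation, and this three-dimensional intersection number equals the two-dimensional one computed on the boundary surface $i_+(\Sigma)$. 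The precise statement I would set up is a lemma: for any $x,y\in H_1(\Sigma)$, one has $\langle x,y\rangle_\Sigma=\langle i_+{}_*x,\ i_-{}_*y\rangle_M$ where the right-hand side denotes the appropriate Poincar\'e–Lefschetz pairing $H_1(M)\otimes H_1(M,\partial M)\to\mathbb{Z}$ composed with the map $H_1(M)\to H_1(M,\partial M)$ induced by, say, pushing a class off $i_-(\Sigma)$; symmetrically it also equals $\langle i_-{}_*x,\ i_+{}_*y\rangle_M$. Granting this, $\langle \sigma(M)x,\sigma(M)y\rangle_\Sigma = \langle i_+{}_*\sigma(M)x,\ i_-{}_*\sigma(M)y\rangle_M = \langle i_-{}_*x,\ i_-{}_*y\rangle_M$, and a parallel computation identifies $\langle x,y\rangle_\Sigma$ with the same quantity (using the other factorization and the symmetry of the intersection pairing), giving $\sigma(M)\in\mathrm{Sp}$, i.e. $\sigma(M)$ preserves $\langle\,,\,\rangle_\Sigma$.

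The main obstacle is the bookkeeping in that lemma: one must be careful about the roles of absolute versus relative homology, which boundary component ($i_+(\Sigma)$ versus $i_-(\Sigma)$) a class is pushed into, and the sign conventions coming from the orientation-preserving/reversing hypotheses (i). The cleanest route is probably to work with the long exact sequence of the pair $(M,\partial M)$ and the commuting square relating Poincar\'e–Lefschetz duality of $M$ to that of the boundary surface; since $i_\pm$ are homology isomorphisms, $H_*(\partial M)$, $H_*(M)$ and $H_*(M,\partial M)$ are all completely determined, and the pairing on $M$ is forced to agree with the one on $\Sigma$ up to the sign dictated by orientations. For rational homology cylinders the identical argument applies verbatim with $\mathbb{Q}$-coefficients, since Poincar\'e–Lefschetz duality and condition (iv${}_\mathbb{Q}$) hold rationally; I would state this at the end in one sentence rather than repeat the argument.
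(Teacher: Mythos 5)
Your reduction in part (1) is fine up to the point where you claim that $f$ can be made level-preserving ``by uniqueness of collar neighborhoods together with the fact that $\Sigma_{g,n}\times[0,1]$ fibers over $[0,1]$.'' That step is a genuine gap: collar uniqueness only normalizes $f$ near the boundary, and straightening a diffeomorphism of $\Sigma_{g,n}\times[0,1]$ rel its prescribed boundary values so that it carries slices to slices is precisely the pseudo-isotopy-versus-isotopy problem for surface diffeomorphisms --- it is the whole content of injectivity, not a routine preliminary, and it requires deep 3-dimensional input (Waldhausen/Hatcher-type theorems on $\mathrm{Diff}$ of Haken manifolds) if done your way. The paper avoids this entirely: from $f$ it only extracts the family $\varphi_t=\mathrm{pr}_{\Sigma_{g,n}}\circ f\circ(\mathrm{id}\times\{t\})$, which is a boundary-fixing \emph{homotopy} through maps (not diffeomorphisms) from $\varphi$ to $\mathrm{id}$, and then quotes the two-dimensional theorem (cited from Ivanov) that diffeomorphisms of $\Sigma_{g,n}$ homotopic rel boundary are isotopic rel boundary. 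You either need to argue as the paper does, or explicitly invoke the nontrivial 3-manifold theorem; the collar argument does not suffice.

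Part (2) also has a genuine gap. The key lemma you propose does not parse dimensionally: in a $3$-manifold the intersection pairing couples $H_1$ with $H_2$ (absolute against relative), and there is no evaluation of $H^2(M,\partial M)$ against classes in $H_1(M)$; a pairing $H_1(M)\otimes H_1(M,\partial M)\to\mathbb{Z}$ is not the Poincar\'e--Lefschetz pairing. Worse, for a homology cylinder over the bounded surface $\Sigma_{g,n}$ one has $H_1(M,\partial M)=0$ (since $i_+$ already surjects onto $H_1(M)$), so the pairing you describe is identically zero while $\langle\ ,\ \rangle_{\Sigma_{g,n}}$ is not; the lemma is false as stated. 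In addition, your final chain of equalities silently replaces $i_{-\ast}\sigma(M)y$ by $i_{-\ast}y$, which is unjustified: $\sigma(M)$ relates $i_-$ to $i_+$, not $i_-$ to itself. The mechanism that actually works (and is the paper's proof) is to note that $i_-(y)$ and $i_+(\sigma(M)y)$ agree in $H_1(M)$, hence their difference is $\partial Y$ for some $Y\in H_2(M,\partial M)$, and then to use the identity $\langle i(x),Y\rangle_M=-\langle x,\partial Y\rangle_{\partial M}$ twice --- once for $i_-(x)$ and once for $i_+(\sigma(M)x)$, which are equal in $H_1(M)$ --- together with $\langle i_\pm x,i_\pm y\rangle_{\partial M}=\pm\langle x,y\rangle_{\Sigma_{g,n}}$ and the vanishing of the cross terms between $i_+$- and $i_-$-classes on $\partial M$. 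Your sketch is missing exactly this bounding class in $H_2(M,\partial M)$, which is the only place where condition (iv) of the definition enters.
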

\begin{proof} 
(1) Suppose $[\varphi] \in \mathrm{Ker} (\mathcal{M}_{g,n} \to 
\mathcal{C}_{g,n})$. We may assume that the diffeomorphism 
$\varphi$ is the identity map near $\partial \Sigma_{g,n}$. 
By assumption, 
there exists a diffeomorphism $\Phi: \Sigma_{g,n} \times [0,1] 
\xrightarrow{\cong} \Sigma_{g,n} \times [0,1]$ satisfying 
\[\Phi \big|_{\Sigma_{g,n} \times \{1\}} = 
\mathrm{id}_{\Sigma_{g,n}} \times \{1\}, \quad 
\Phi \big|_{(\partial \Sigma_{g,n}) \times [0,1]} = 
\mathrm{id}_{(\partial \Sigma_{g,n}) \times [0,1]} \ \ \mathrm{and}\ \ 
\Phi \big|_{\Sigma_{g,n} \times \{0\}} = 
\varphi \times \{0\}.\]
Let $\varphi_t$ $(0 \le t \le 1)$ be the map defined as the composite
\[\Sigma_{g,n} \xrightarrow{\mathrm{id} \times \{t\}} 
\Sigma_{g,n} \times [0,1] 
\xrightarrow{\ \Phi\ } \Sigma_{g,n} \times [0,1] 
\xrightarrow{\mathrm{projection}} \Sigma_{g,n}.\]
Then $\{\varphi_t\}_{0 \le t \le 1}$ 
gives a homotopy between 
$\varphi_0=\mathrm{id}_{\Sigma_{g,n}}$ and $\varphi_1=\varphi$. 
It is well known (see \cite[Section 2]{ivanov} and references 
given there) that for the surface $\Sigma_{g,n}$ we are now considering, 
two diffeomorphisms connected by a boundary fixing 
homotopy are isotopic. Hence $\varphi$ is isotopic to 
the identity and so $[\varphi]=1 \in \mathcal{M}_{g,n}$. 

(2) Recall that the intersection pairing $\langle \ , \, 
\rangle_{\Sigma_{g,n}} :H_1(\Sigma_{g,n}) 
\otimes H_1(\Sigma_{g,n}) \to \mathbb{Z}$
on $H_1 (\Sigma_{g,n})$ is defined as 
the composition
\[H_1(\Sigma_{g,n}) \otimes H_1(\Sigma_{g,n}) \to 
H_1(\Sigma_{g,n}) \otimes 
H_1(\Sigma_{g,n}, \partial \Sigma_{g,n}) 
\xrightarrow{\,\cong\,} 
H_1(\Sigma_{g,n}) \otimes H^1(\Sigma_{g,n}) \to 
\mathbb{Z},\]
where the first (resp.~second) map is applying the natural map 
$H_1(\Sigma_{g,n}) \to H_1(\Sigma_{g,n}, \partial \Sigma_{g,n})$ 
(resp.~the Poincar\'e duality) to the second factor and 
the last map is the Kronecker product. 

The boundary $\partial M$ of $M$ is the double of $\Sigma_{g,n}$ 
so that it is a closed oriented 
surface of genus $2g+n-1$. It is easy to see that the intersection pairing 
$\langle \ , \, \rangle_{\partial M}$ on $H_1 (\partial M)$ 
satisfies
\[\langle x , y \rangle_{\Sigma_{g,n}} = 
\langle i_+(x) , i_+(y) \rangle_{\partial M} = 
-\langle i_-(x) , i_-(y) \rangle_{\partial M}
\]
for any $x, y \in H_1(\Sigma_{g,n})$. 
Also, the intersection pairing 
$\langle \ , \, \rangle_M : H_1 (M) \otimes H_2(M, \partial M) \to \mathbb{Z}$ 
on $M$ satisfies 
\[\langle i(x), Y \rangle_M = 
- \langle x , \partial Y \rangle_{\partial M}\]
for any $x \in H_1 (\partial M)$ and $Y \in H_2 (M, \partial M)$, 
where $i:\partial M \hookrightarrow M$ denotes the inclusion. 
Then our claim follows from 
\begin{align*}
\langle x , y \rangle_{\Sigma_{g,n}} 
&= -\langle i_-(x) , i_-(y) \rangle_{\partial M} 
= -\langle i_-(x) , i_-(y) - i_+ (\sigma(M) (y)) \rangle_{\partial M} \\
&= \langle i_- (x), Y \rangle_M 
= \langle i_+ (\sigma(M) (x)), Y \rangle_M \\
&= -\langle i_+(\sigma(M)(x)), i_-(y) - i_+ (\sigma(M) (y)) 
\rangle_{\partial M} 
= \langle i_+(\sigma(M) (x)), i_+(\sigma(M) (y)) \rangle_{\partial M} \\
&= \langle \sigma(M)(x), \sigma(M) (y) \rangle_{\Sigma_{g,n}}, 
\end{align*}
where $Y \in H_2 (M,\partial M)$ is a homology class satisfying 
$\partial Y = i_-(y) - i_+ (\sigma(M) (y))$. 
\end{proof}

To represent $\sigma(M,i_+,i_-)$ by a matrix, we here and hereafter 
fix a spine $S$ of $\Sigma_{g,n}$ as in Figure \ref{fig:spine0}. 
That is, $S$ is a bouquet of oriented $2g+n-1$ circles 
$\gamma_1, \ldots , \gamma_{2g+n-1}$ 
tied at a base point $p \in \partial \Sigma_{g,n}$ 
such that it is deformation retract of 
$\Sigma_{g,n}$ relative to $p$. 
The fundamental group $\pi_1 (\Sigma_{g,n})$ of $\Sigma_{g,n}$ is 
the free group $F_{2g+n-1}$ of rank $2g+n-1$ generated by 
$\gamma_1,\ldots,\gamma_{2g+n-1}$. 
These loops form an ordered basis of 
$H_1 (\Sigma_{g,n}) \cong \mathbb{Z}^{2g+n-1}$. 

\begin{remark}\label{rmk:symplectic}
Let $(M,i_+,i_-) \in \mathcal{C}_{g,n}$. 
Proposition \ref{prop:monodromy} (2) and its proof show that 
$\sigma(M,i_+,i_-) \in 
\mathrm{Aut}(H_1 (\Sigma_{g,n})) \cong GL(2g+n-1,\mathbb{Z})$ 
is represented by a matrix of the form
\[\left(\begin{array}{cc}
X & 0_{(2g,n-1)} \\ \ast & I_{n-1}
\end{array}\right)\]
with $X \in \mathrm{Sp}(2g,\mathbb{Z})$. (A similar result using 
$\mathrm{Sp}(2g,\mathbb{Q})$ holds for $\mathcal{C}_{g,n}^\mathbb{Q}$.) 

\begin{figure}[htbp]
\begin{center}
\includegraphics[width=0.6\textwidth]{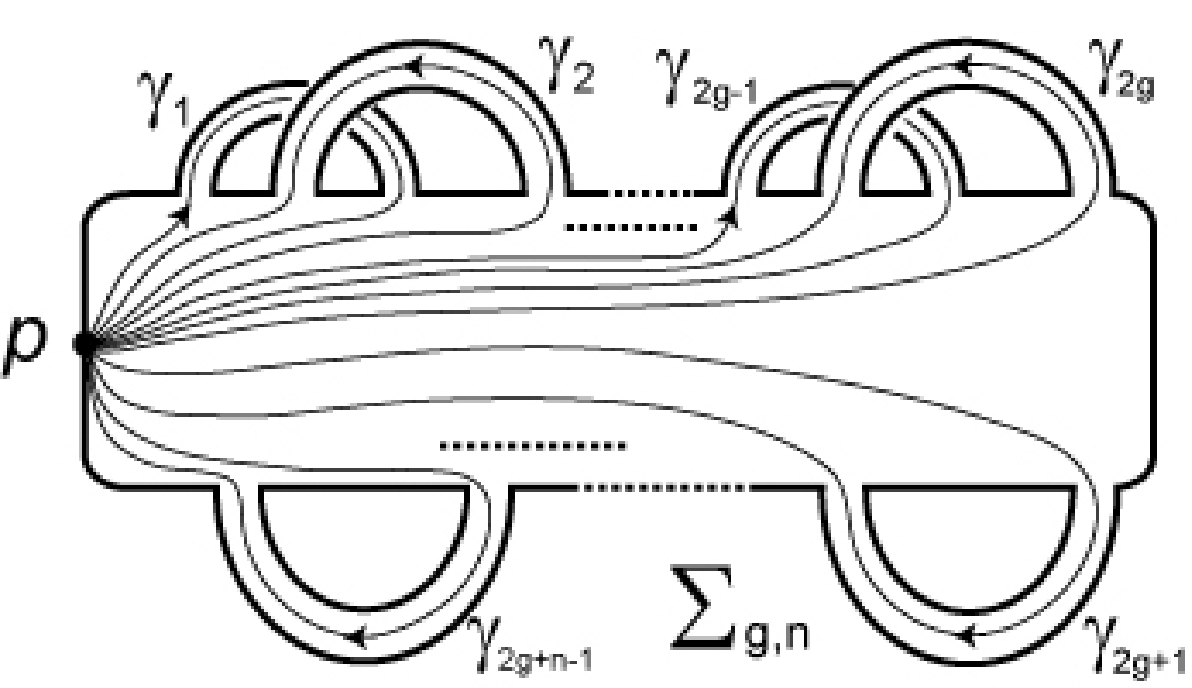}
\end{center}
\caption{A spine $S$ of $\Sigma_{g,n}$}
\label{fig:spine0}
\end{figure}

\end{remark}

Next we recall the definition of sutured manifolds given by Gabai 
\cite{gabai1}. 
We use here a special class of sutured manifolds.

\begin{definition}\label{def:suture}
A {\it sutured manifold\/} $(M,\gamma)$ is a compact 
oriented 3-manifold $M$ together with a subset $\gamma\subset \partial M$ 
which is a union of finitely many mutually disjoint annuli. 
For each component of $\gamma$, 
an oriented core circle called a {\it suture\/} is fixed, 
and we denote the set 
of sutures by $s(\gamma)$. 
Every component of $R(\gamma)=\partial M-{\rm Int\,}\gamma$
is oriented so that the orientations on $R(\gamma)$ are coherent 
with respect to $s(\gamma)$, i.e., the orientation of each component 
of $\partial R(\gamma)$, which is induced by that of $R(\gamma)$, 
is parallel to the orientation of the corresponding component 
of $s(\gamma)$. 
We denote by $R_{+}(\gamma)$ (resp. $R_{-}(\gamma)$) 
the union of those 
components of $R(\gamma)$ whose normal vectors point out of 
(resp. into) $M$. 
In this paper, 
we sometimes abbreviate $R_+(\gamma)$ (resp. $R_{-}(\gamma)$) 
to $R_{+}$ (resp. $R_{-}$). 
In the case that $(M,\gamma)$ is diffeomorphic 
to $(\Sigma \times [0,1], \partial \Sigma \times [0,1])$ 
where $\Sigma$ is a compact oriented surface, 
$(M, \gamma)$ is called a {\it product sutured manifold\/}.   
\end{definition}

Let $(M, i_{+}, i_{-}) \in \mathcal{C}_{g,n}$.
If we consider a small regular neighborhood of 
$i_{+} (\partial \Sigma_{g,n})=i_{-}(\partial \Sigma_{g,n})$ 
in $\partial M$ to be $\gamma$, 
we can regard $(M, i_{+}, i_{-})$ as a sutured manifold. 
However the converse is clearly not true in general. 
In the next section, we will determine which kinds of links give 
homology cylinders by considering their complementary 
sutured manifolds, which are defined as follows. 

\begin{definition}
Let $L$ be an oriented link in the 3-sphere $S^3$, 
and $\bar{R}$ a Seifert surface of $L$. 
Set $R:=\bar{R}\cap E(L)$, where $E(L)= {\rm cl }(S^3-N(L))$ is the complement 
of a regular neighborhood of $L$, and 
$(P,\delta):=(N(R,E(L)), N(\partial R,\partial E(L)))$. 
We call $(P, \delta)$ the {\it product sutured manifold for\/} $R$. 
Let $(M, \gamma)=({\rm cl }(E(L)-P), {\rm cl }(\partial E(L)-\delta))$ 
with $R_{\pm}(\gamma)=R_{\mp}(\delta)$. 
We call $(M, \gamma)$ the {\it complementary sutured manifold for\/} $R$. 
\end{definition}


\section{Homologically fibered links}\label{section:fibre}

Let $L$ be an oriented link in the 3-sphere $S^3$, and 
$\Delta_{L}(t)$ the normalized (one variable) 
Alexander polynomial of $L$, i.e., 
the lowest degree of $\Delta_{L}(t)$ is 0. 

\begin{definition}\label{def:HFKnot}
An $n$-component oriented link $L$ in $S^3$ is said to be 
{\it homologically fibered\/} if $L$ satisfies 
the following two conditions: 
\begin{enumerate}
\renewcommand{\labelenumi}{(\roman{enumi})}
\item The degree of $\Delta_{L}(t)$ is $2g+n-1$, 
where $g$ is the genus of a connected Seifert surface of $L$; and 
\item $\Delta_{L}(0)=\pm 1$. 
\end{enumerate}
An $n$-component oriented link $L$ satisfying (i) is 
said to be {\it rationally homologically fibered\/}. 
\end{definition}

Hereafter links are always assumed to be oriented. 
We also assume $2g+n-1 \ge 1$. Indeed 
the trivial knot is the only rationally homologically 
fibered link with $2g+n-1=0$. 

A link $L$ is said to be {\it fibered} if $E(L)$ is the total space of a 
fiber bundle over $S^1$ whose fiber is given by a Seifert surface. 
It is well known that fibered links satisfy the conditions 
in Definition \ref{def:HFKnot}. Hence they are homologically fibered. 

Let $L$ be an $n$-component link and $\Sigma_{g,n}$ 
the compact oriented surface 
that is diffeomorphic to a Seifert surface $R$ of $L$. 
We fix a diffeomorphism $\vartheta: 
\Sigma_{g,n} \stackrel{\cong}{\rightarrow} R$ 
and denote by $(M, \gamma)$ the complementary sutured manifold for $R$.
Then we may see that there are an orientation-preserving embedding 
$i_{+}: \Sigma_{g,n}\to \partial M$ 
and an orientation-reversing embedding 
$i_{-}: \Sigma_{g,n}\to \partial M$ with 
$i_{+}(\Sigma_{g,n})=R_{+}(\gamma)$ 
and $i_{-}(\Sigma_{g,n})=R_{-}(\gamma)$, where 
two embeddings $i_{\pm}$ are the composite maps of $\vartheta$ and 
the natural embeddings $\iota_{\pm}:R \hookrightarrow \partial M$: 
\[\SelectTips{cm}{}\xymatrix{
\Sigma_{g,n}  \ar[r]^{\vartheta}  \ar[dr]_{i_{\pm}} 
& R \ar[d]^{\iota_{\pm}} \\ & M }\]

If $i_{+},\,i_{-}: H_{1}(\Sigma_{g,n})\to H_{1}(M)$ are isomorphisms, 
we may regard $(M, \gamma)$ as a homology cylinder.  
The purpose of this section is to prove the next proposition.

\begin{proposition}\label{thm:homologicalfibre}
Let $R$ be a Seifert surface of a link $L$ with a diffeomorphism 
$\vartheta: \Sigma_{g,n} \stackrel{\cong}{\rightarrow} R$. 
If the complementary sutured manifold 
for $R$ is a $($rational$)$ homology cylinder,  
then $L$ is $($rationally$)$ homologically fibered. Conversely, 
if $L$ is $($rational$)$ homologically fibered, 
then the complementary sutured manifold 
for any minimal genus connected 
Seifert surface of $L$ gives a $($rational$)$ 
homology cylinder.
\end{proposition}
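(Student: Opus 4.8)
The plan is to translate the homological condition on the sutured manifold $(M,\gamma)$ directly into properties of the Alexander polynomial $\Delta_L(t)$, using the fact that $(M,\gamma)$ is obtained by cutting the link exterior $E(L)$ along the Seifert surface $R$. The key observation is that $E(L)$ can be reconstructed from $(M,\gamma)$ by regluing along $R_+(\gamma)$ and $R_-(\gamma)$ via the two embeddings $i_\pm$, so that the infinite cyclic cover $\widetilde{E(L)}$ (corresponding to the total linking number homomorphism) is built from countably many copies $\{M_k\}_{k\in\mathbb{Z}}$ of $M$, glued according to the pattern $i_+(M_k)\leftrightarrow i_-(M_{k+1})$. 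A Mayer--Vietoris argument then expresses $H_1$ of the cyclic cover, as a module over $\Lambda=\mathbb{Z}[t^{\pm 1}]$ (or $\mathbb{Q}[t^{\pm 1}]$ in the rational case), as the cokernel of the map $t\cdot (i_+)_* - (i_-)_* : \Lambda \otimes H_1(\Sigma_{g,n}) \to \Lambda \otimes H_1(M)$.

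First I would set up this Mayer--Vietoris / presentation argument carefully, checking that with $R$ connected of genus $g$ on an $n$-component link, $H_1(\Sigma_{g,n})\cong\mathbb{Z}^{2g+n-1}$, so that when $(M,\gamma)$ is a homology cylinder the presentation matrix of the Alexander module is a square $(2g+n-1)\times(2g+n-1)$ matrix of the form $tA - B$ where $A$ represents $(i_+)_*$ and $B$ represents $(i_-)_*$ in the fixed basis coming from the spine $S$. By definition of a homology cylinder, $(i_+)_*$ and $(i_-)_*$ are isomorphisms over $\mathbb{Z}$ (resp.\ over $\mathbb{Q}$), so $\det A=\pm 1$ and $\det B = \pm 1$ (resp.\ both nonzero). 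Hence $\Delta_L(t) \doteq \det(tA-B)$ is a polynomial of degree exactly $2g+n-1$ with $\Delta_L(0) \doteq \det(-B) = \pm 1$ — which is precisely conditions (i) and (ii), giving the first implication. In the rational case only (i) survives, as required. One has to be mildly careful that $\det(tA-B)$ really is the normalized Alexander polynomial up to units and not off by a factor coming from $H_0$ or from the peripheral torus; this is the kind of bookkeeping (the exact sequence of the pair, the role of the link group abelianization) that I would check but not belabor.

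For the converse, suppose $L$ is (rationally) homologically fibered and take a minimal genus connected Seifert surface $R$, with complementary sutured manifold $(M,\gamma)$ and the embeddings $i_\pm:\Sigma_{g,n}\to\partial M$ described before the proposition. I would again use the cyclic-cover presentation: $H_1$ of the exterior's Alexander module is presented by $tA-B$ with $A,B$ the matrices of $(i_\pm)_*$. Since $R$ has minimal genus, a result going back to the relation between Seifert surfaces and Alexander polynomials guarantees the presentation matrix is square of size $2g+n-1$ (no "extra" generators survive — this is where minimal genus is essential). Now conditions (i) and (ii) say $\det(tA-B)$ has degree $2g+n-1$ and constant term $\pm 1$: the degree being maximal forces $\det A\neq 0$ (so $(i_+)_*$ is a rational isomorphism), and $\det(-B)=\pm 1$ forces $(i_-)_*$ to be an isomorphism over $\mathbb{Z}$; symmetry of the Alexander polynomial, $\Delta_L(t)\doteq\Delta_L(1/t)$ up to units, then upgrades the leading coefficient to $\pm 1$ as well, so $(i_+)_*$ is an isomorphism over $\mathbb{Z}$. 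Thus conditions (iv) of the homology cylinder definition hold and $(M,\gamma)$ is a homology cylinder. In the rational case we only get nonvanishing determinants, hence rational isomorphisms, hence a rational homology cylinder.

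The main obstacle, I expect, is the bookkeeping in the Mayer--Vietoris step: one must verify that cutting $E(L)$ along $R$ and reassembling the infinite cyclic cover really yields the clean presentation $tA-B$ with $A=(i_+)_*$, $B=(i_-)_*$ — in particular that $H_*(M)$ has no higher torsion or rank contributions that would spoil the identification of $\det(tA-B)$ with $\Delta_L(t)$, and that the peripheral annuli $\gamma$ (the sutures) do not contribute. For a knot ($n=1$) this is the classical Seifert-matrix computation of the Alexander polynomial, essentially $\det(tV-V^T)$ with $V$ a Seifert matrix; the content here is checking it goes through verbatim for links with connected Seifert surface and that "minimal genus" is exactly the hypothesis that makes the presentation square. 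I would also need to invoke symmetry of $\Delta_L$ to pass from "$\det(-B)=\pm 1$" to "leading coefficient $\pm 1$" in the converse; alternatively one notes $A$ and $B$ are related by $B = A^T$-type duality from the two sides of $R$, which directly gives $|\det A| = |\det B|$.
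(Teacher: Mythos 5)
Your strategy is essentially the paper's: both proofs reduce the statement to the observation that, in suitable bases, the one-variable Alexander polynomial equals $\det\bigl(t\,(i_-)_\ast-(i_+)_\ast\bigr)$ up to multiplication by $\pm t^k$, and then read the rational or integral invertibility of $(i_\pm)_\ast$ off the extreme coefficients. The paper obtains this presentation without passing to the infinite cyclic cover: it identifies $(i_-)_\ast$ and $(i_+)_\ast$ with the Seifert matrix $\mathcal{S}$ and its transpose in the basis of $H_1(M)$ dual to the bands (Lemmas \ref{lem:basis}, \ref{lem:inclusion}, \ref{lem:seifertmatrix1}) and then uses $\Delta_L(t)=\det(t\mathcal{S}-\mathcal{S}^T)$; your Mayer--Vietoris derivation of the matrix $tA-B$ is the classical proof of that formula, so this is a presentational rather than a substantive difference.

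There is, however, a genuine gap in your converse. You assert that the presentation matrix is square of size $2g+n-1$ ``because $R$ has minimal genus''; that is not where minimality enters, and the structural facts about $M$ that your argument actually needs are never established. For every connected Seifert surface, minimal or not, the cut-open exterior $M$ satisfies: $H_1(M)$ is free abelian of rank $2g+n-1$, generated by the loops $\beta_i$ dual to the bands, and $H_\ast(M)=0$ for $\ast\ge 2$ --- this is the paper's Lemma \ref{lem:basis}, proved via the Mayer--Vietoris sequence of $S^3=P\cup M$ and Poincar\'e duality. Without the freeness of $H_1(M)$ you cannot regard $A$ and $B$ as square integer matrices, and $\det B=\pm1$ would not yield an isomorphism onto $H_1(M;\mathbb{Z})$ if torsion were present; moreover, condition (iv) of the homology-cylinder definition demands isomorphisms on all of $H_\ast$, so you must also know $H_2(M)=H_3(M)=0$, which your sketch does not address. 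Minimal genus is needed at a different point: it makes $\deg\Delta_L(t)=2g+n-1$ equal to the size of the (always square) matrix, so that both the leading and the constant coefficients of $\det(tA-B)$ are nonzero; for a non-minimal surface the same square presentation exists, but its determinant cannot have full span, the Seifert matrix is singular, and the complementary sutured manifold is not even a rational homology cylinder. Once Lemma \ref{lem:basis} is supplied, your remaining bookkeeping (the degree condition forces $\det A\neq 0$; $\Delta_L(0)=\pm1$ forces $\det B=\pm1$; symmetry of $\Delta_L$, or more directly the fact that $(i_+)_\ast$ and $(i_-)_\ast$ are given by $\mathcal{S}^T$ and $\mathcal{S}$, gives $\det A=\pm1$) is correct and agrees with the paper's proof.
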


\begin{remark}
\begin{itemize}
\item[$(1)$] Aside from the name of homologically fibered links, 
the above fact was essentially mentioned in Crowell-Trotter \cite{ct}. 
\item[$(2)$] Suppose $L$ is a homologically fibered link and 
$M$ is the homology cylinder obtained from $L$ by the above procedure. 
If we change the diffeomorphism $\vartheta: \Sigma_{g,n} 
\xrightarrow{\cong} R$ into another 
one $\vartheta'$, then the resulting homology cylinder is 
$(\vartheta^{-1} \circ \vartheta')^{-1} \cdot M \cdot 
(\vartheta^{-1} \circ \vartheta') \in \mathcal{C}_{g,n}$, where 
$\vartheta^{-1} \circ \vartheta' \in \mathcal{M}_{g,n}$ is considered to 
be a homology cylinder as seen in Example \ref{ex:mgtocg}.
\end{itemize}
\end{remark}

For the proof of Proposition \ref{thm:homologicalfibre}, 
we first set up our notation, following \cite{bz} and \cite{lickorish}. 
Consider the basis 
$\{\alpha_{i}:=[\gamma_i]\}$ $(1\le i\le 2g+n-1)$ 
of $H_{1}(\Sigma_{g,n} ; \mathbb Z)\cong \mathbb Z^{2g+n-1}$ 
as shown in Figure \ref{fig:spine0}. 
We may see that $R$ consists of a disk $D^2$ and bands $B_{i}$
$(1\le i\le 2g+n-1)$, 
where the cores of $B_{i}$ correspond to $\vartheta(\alpha_{i})$. 
For simplicity, 
we use $\alpha_{i}$ again 
instead of $\vartheta(\alpha_{i})$. 
See Figure \ref{fig:trefoil} for the case of the trefoil.

\begin{figure}[h]
\centering
\includegraphics[width=.5\textwidth]{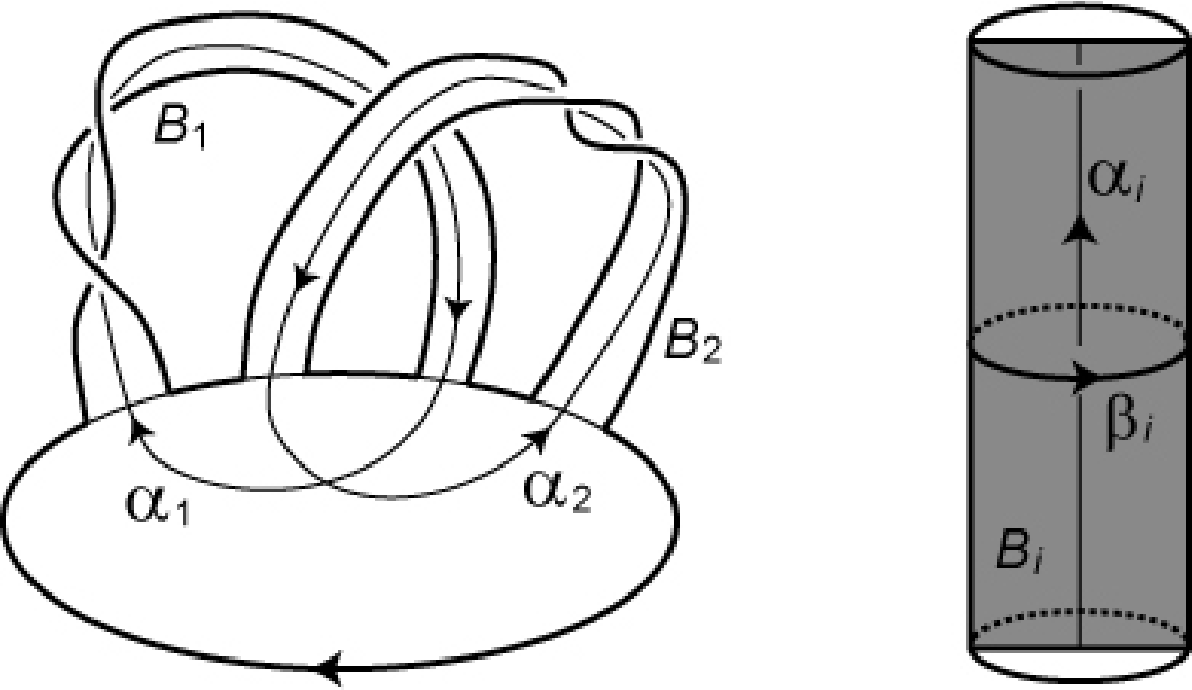}
\caption{Trefoil with the genus $1$ Seifert surface}\label{fig:trefoil}
\end{figure}

Let $(P,\delta)$ be the product sutured manifold for $R$.
The curves $\alpha_{1}, \ldots , \alpha_{2g+n-1}$ of $R$ are 
projected onto curves $\alpha_{1}^{+}, \ldots , \alpha_{2g+n-1}^{+}$ 
on $R_{+}(\delta)$ by $\iota_{+}$, 
and  $\alpha_{1}^{-}, \ldots , \alpha_{2g+n-1}^{-}$ on $R_{-}(\delta)$ 
by $\iota_{-}$. 
Choose a curve $\beta_{i}$ on the boundary of the regular neighborhood 
of the band $B_{i}$
so that each $\beta_{i}$ bounds a disk in $P$ 
that meets $\alpha_{i}$ at one point.
The orientation of the disk and of $\beta_{i}$ 
are chosen such that the intersection number is +1. 
(See Figure \ref{fig:trefoil}, or \cite[Figure 8.3]{bz}.) 

\begin{lemma}\label{lem:basis}
\begin{itemize}
\item[$(1)$] 
The set
$\{\alpha_{1}^{\varepsilon}, \ldots , \alpha_{2g+n-1}^{\varepsilon}, 
\beta_1,\ldots , \beta_{2g+n-1} \}$ 
with $\varepsilon = +1  \text{ or } -$ is a basis of 
$H_1 (\partial M)=H_{1}(\partial P)\cong \mathbb Z^{4g+2n-2}$. 
\item[$(2)$] 
$\{\alpha_{1}^{\varepsilon}, \ldots , \alpha_{2g+n-1}^{\varepsilon}\}$ 
with $\varepsilon = +1  \text{ or } -$ 
is a basis of $H_{1}(P)$ and 
$\{\beta_{1}, \ldots , \beta_{2g+n-1}\}$ is a basis of 
$H_{1}(M)\cong \mathbb Z^{2g+n-1}$. 
\item[$(3)$] 
$H_\ast (M)=0$ for $\ast \ge 2$.
\end{itemize}
\end{lemma}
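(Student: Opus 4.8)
The plan is to identify a convenient handle/cell structure for the pieces $P$ and $M$ and run the relevant Mayer--Vietoris and long exact sequences. First I would treat $(1)$ topologically: $P = N(R, E(L))$ deformation retracts onto $R$, so $\partial P$ is the double of $R$ along $\partial R$, hence a closed orientable surface of genus $2g+n-1$; the $2(2g+n-1)$ curves $\alpha_i^\varepsilon, \beta_i$ are visibly a ``standard'' symplectic-type spanning set (each $\beta_i$ meets exactly $\alpha_i^\varepsilon$ once and misses the others, and the $\alpha_i^\varepsilon$ are disjoint), so they form a basis of $H_1(\partial P;\mathbb{Z})\cong\mathbb{Z}^{4g+2n-2}$. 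To be careful about the ``$\varepsilon=+$ or $-$'' part I would note that $\iota_+$ and $\iota_-$ differ by a self-homeomorphism of $\partial P$, or simply observe that the intersection matrix of $\{\alpha_i^\varepsilon\}\cup\{\beta_j\}$ is unimodular in either case.

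For $(2)$, the claim that $\{\alpha_i^\varepsilon\}$ is a basis of $H_1(P)$ is immediate from the deformation retraction $P\simeq R\simeq \Sigma_{g,n}$, since the $\alpha_i$ are by construction the spine curves $\gamma_i$ and hence a basis of $H_1(\Sigma_{g,n})\cong\mathbb{Z}^{2g+n-1}$. The substantive point is $H_1(M)\cong\mathbb{Z}^{2g+n-1}$ with basis $\{\beta_i\}$. Here I would write $E(L) = P \cup_{R_+(\delta)\sqcup R_-(\delta)} M$ (the gluing is along $R_\pm$, not all of $\partial P$) and feed this into Mayer--Vietoris. Alternatively, and more cleanly, I would use that $E(L)$ is obtained from $M$ by attaching $R\times[0,1]$ along $R_+\sqcup R_-$, i.e. $E(L)$ is the mapping torus-like gluing; but the cleanest route is Alexander duality together with the classical fact (Crowell--Trotter, as the remark indicates) that cutting $E(L)$ along $R$ gives a manifold $M$ whose homology is computed by the Seifert form. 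Concretely: $H_1(E(L);\mathbb Z)\cong\mathbb Z^n$ generated by meridians, and the infinite cyclic cover $\widetilde{E(L)}$ is built from $\mathbb Z$-many copies of $M$ glued along copies of $R$; the Wang sequence then shows $H_1(M)$ is free of the stated rank and that the curves $\beta_i$ --- which are the ``dual'' curves pushed off the bands, hence exactly the cocores of the $1$-handles of $M$ dual to the core disks the $\alpha_i$ bound in $P$ --- give a basis. I would make the handle picture explicit: $M$ deformation retracts onto a $2$-complex, or better, $M$ has a handle decomposition with one $0$-handle and $2g+n-1$ one-handles whose cocores are the $\beta_i$, because $M = E(L)\setminus N(R)$ and $R$ together with its product neighborhood accounts for a disk plus $2g+n-1$ bands.

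For $(3)$, $H_*(M)=0$ for $*\ge 2$: since $M$ is a compact orientable $3$-manifold with nonempty boundary, $H_3(M)=0$ automatically, and $H_2(M)$ is free abelian; I would kill it either by exhibiting the handle decomposition above (no $2$- or $3$-handles needed, since $M$ collapses to a wedge of circles once we see it is a handlebody-like piece) or by a duality/Euler characteristic count: $H_2(M)\cong H^1(M,\partial M)$ by Lefschetz duality, and a Mayer--Vietoris comparison with the known $H_*(E(L))$ and $H_*(P)$ forces $H_2(M,\partial M)$, hence $H^2(M)\cong H_1(M,\partial M)$, to have the right rank; chasing this shows $H_2(M)=0$.

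The main obstacle I anticipate is item $(2)$, specifically pinning down that the $\beta_i$ (rather than some other spanning set) form an \emph{integral} basis of $H_1(M)$ and not merely a $\mathbb Q$-basis or a finite-index subgroup. The geometric heart of this is the assertion that $M$ is obtained from a $0$-handle by attaching $2g+n-1$ one-handles dual to the bands $B_i$ of $R$, so that its $1$-skeleton is already its homotopy type in degree $\le 1$ and the $\beta_i$ are the cocore circles. Establishing this handle picture carefully --- i.e. that cutting $E(L)$ along $R=D^2\cup B_1\cup\cdots\cup B_{2g+n-1}$ turns the product neighborhood $P$ of $R$ ``inside out'' and leaves $M$ with exactly one $1$-handle per band --- is the step requiring genuine care; once it is in place, parts $(1)$ and $(3)$ follow by routine duality and Mayer--Vietoris bookkeeping.
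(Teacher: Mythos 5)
Your proposal has workable ingredients in it (the Mayer--Vietoris decomposition, Alexander/Lefschetz duality), but the step you yourself single out as ``the geometric heart'' --- that $M=\mathrm{cl}(E(L)-P)$ is obtained from one $0$-handle by attaching $2g+n-1$ one-handles dual to the bands $B_i$, with the $\beta_i$ as cocore circles --- is false in general. That claim would make $M$ a handlebody and $\pi_1(M)$ free, which happens only for very special (``free'') Seifert surfaces whose disk-with-bands form is standardly embedded; a Seifert surface is always a disk with bands, but the bands may be knotted and linked, and then the complement is not a handlebody. Concretely: for the genus-one surfaces of doubled knots in Section \ref{sec:nakanishi} (or the surfaces with knotted bands of Example \ref{ex:gkm}), $\pi_1(M)$ contains the group of the companion knot and is not free; the simplest illustration is $\bar R$ an annulus neighborhood of a nontrivial knot $K$ (so $g=0$, $n=2$, $2g+n-1=1$), where $M$ is the exterior of $K$, with $H_1(M)\cong\mathbb{Z}$ as the lemma asserts but certainly not a solid torus. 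Indeed, if your handle picture were correct, the handle-number estimates of Propositions \ref{prop:estimate} and \ref{prop:nakanishi} would be pointless. Your fallback via the infinite cyclic cover is also stated backwards: the description of the cover as $\mathbb{Z}$ copies of $M$ glued along $R$ is exploited \emph{after} one knows $H_*(M)$, and the Wang sequence computes $H_*(E(L))$ from the cover; it does not show that $H_1(M)$ is free with basis $\{\beta_i\}$.

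The correct argument is softer than any handle structure, and it is the one the paper uses: decompose $S^3$ into $M$ and a thickening of $\bar R$ (namely $P\cup N(L)$), glued along $\partial M=\partial P$. Since $H_2(S^3)=H_1(S^3)=0$, Mayer--Vietoris gives an isomorphism $\phi\colon H_1(\partial M)\xrightarrow{\cong} H_1(P)\oplus H_1(M)$. Each $\beta_i$ bounds a disk in $P$, so $\phi(\beta_i)=(0,\beta_i)$, while $\phi(\alpha_i^{\varepsilon})=(\alpha_i^{\varepsilon},\ast)$; combining this block-triangular shape with part (1) and the fact that $\{\alpha_i^{\varepsilon}\}$ is a basis of $H_1(P)$ forces $\{\beta_j\}$ to be an \emph{integral} basis of $H_1(M)$ --- precisely the point you were worried about, with no geometric control on $M$ needed. (Alexander duality $H_1(S^3\setminus\bar R)\cong H^1(\bar R)$ plus the linking pairing, which you mention in passing, is a legitimate alternative, but you would have to carry it out rather than lean on the handlebody picture.) For (3), the clean route is: $H_1(\partial M)\to H_1(M)$ is onto by (1) and (2), hence $H_1(M,\partial M)=0$, hence $H_2(M)\cong H^1(M,\partial M)=0$ by Poincar\'e--Lefschetz duality, and $H_*(M)=0$ for $*\ge 3$ since $\partial M\neq\emptyset$; your duality/Euler-characteristic sketch gestures at this but conflates $H^1(M,\partial M)$ with $H^2(M)$ and never pins down the vanishing.
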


\begin{proof}
It is not difficult to show (1) and the first statement in (2). 
For the second one in (2), one may apply the Mayer-Vietoris 
sequence: 
\[0=H_{2}(S^3)\rightarrow H_{1}(\partial M)\xrightarrow{\phi}
H_{1}(P)\oplus H_{1}(M)\rightarrow H_{1}(S^3)=0.\]
Note that $\partial M = \partial P$ and 
$\phi(\beta_{i})=(0,\beta_{i})$. 
Then, the conclusion follows from (1).

In the exact sequence 
$H_{1}(\partial M) \to H_{1}(M) \to H_{1}(M,\partial M) 
\to 0$, 
the first map is surjective from (1) and (2). 
Thus $H_{1}(M,\partial M)=0$. 
By the Poincar\'e duality, 
we have $H_{2}(M) \cong H^1 (M,\partial M)=0$. 
Clearly $H_\ast (M)=0$ for $\ast \ge 3$, and (3) holds. 
\end{proof}

Let $\mathcal{S}$ be the Seifert matrix 
corresponding to the above basis of $H_1 (R)$, 
namely 
$\mathcal{S}=(a_{jk})=(\text{lk}(\alpha^{-}_{j}, \alpha_{k}))\,
(1\le j,\,k\le 2g+n-1)$.

\begin{lemma}\label{lem:inclusion}
Let $\iota_{\pm} : R_{\pm}(\delta)\to M$ 
denote the inclusions. Then, 
\[\iota_{+}(\alpha^{+}_{j})=\sum_{k=1}^{2g+n-1}a_{kj}\beta_{k}
\hspace{0.5cm}\text{ and }\hspace{0.5cm} 
\iota_{-}(\alpha^{-}_{j})=\sum_{k=1}^{2g+n-1}a_{jk}\beta_{k}.\]
\end{lemma}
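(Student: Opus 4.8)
The plan is to compute the images of the homology classes $\iota_\pm(\alpha_j^\pm)$ in $H_1(M)$ by expressing them in the basis $\{\beta_1,\dots,\beta_{2g+n-1}\}$ provided by Lemma \ref{lem:basis}(2). Since that basis is dual, in a suitable sense, to the cycles $\alpha_k$ (each $\beta_k$ bounds a disk in $P$ meeting $\alpha_k$ once and no other $\alpha_i$), the coefficient of $\beta_k$ in the expansion of a class $z\in H_1(M)$ ought to be detected by an intersection/linking pairing with $\alpha_k$. So the first step is to make precise a pairing $H_1(M)\times H_1(R)\to\mathbb Z$ — concretely, given $z\in H_1(M)$ and a cycle $c$ in $R$, push $c$ off $R$ in $S^3$ (or equivalently use the linking form in $S^3$, or the intersection of $z$ with a Seifert-type surface for $\alpha_k$ in $P$) and read off the result. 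The point is that under this pairing $\langle \beta_k, \alpha_i\rangle = \delta_{ki}$, which is exactly the content of the choice of $\beta_i$ together with Lemma \ref{lem:basis}(1).

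Next I would evaluate $\langle \iota_+(\alpha_j^+), \alpha_k\rangle$ and $\langle \iota_-(\alpha_j^-), \alpha_k\rangle$ directly. The curve $\alpha_j^+ = \iota_+(\alpha_j)$ is the push-off of $\alpha_j$ to the positive side of $R$, and the curve $\alpha_j^-$ is the push-off to the negative side. Pairing $\alpha_j^+$ (viewed in $M$) with $\alpha_k$ amounts, after translating through the definition of the pairing, to computing a linking number in $S^3$ of a positive push-off of $\alpha_j$ with $\alpha_k$, which is by definition the Seifert matrix entry — with the appropriate transpose convention this is $\mathrm{lk}(\alpha_k^-,\alpha_j)=a_{kj}$ for the $+$ case and $\mathrm{lk}(\alpha_j^-,\alpha_k)=a_{jk}$ for the $-$ case. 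Here the two different orderings of the indices in the statement are precisely the manifestation of the two push-off directions, i.e. of the relation $\mathcal S - \mathcal S^T$ being the intersection form on $R$. Combining with the duality $\langle\beta_k,\alpha_i\rangle=\delta_{ki}$ from the previous paragraph then yields the two displayed formulas, since a class in $H_1(M)$ is determined by its pairings with all the $\alpha_k$.

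I expect the main obstacle to be bookkeeping rather than conceptual: pinning down the correct sign and transpose conventions so that the push-off to the $+$ side produces $a_{kj}$ and the push-off to the $-$ side produces $a_{jk}$, consistently with the orientation conventions fixed for $i_+$ (orientation-preserving), $i_-$ (orientation-reversing), the Seifert matrix $\mathcal S=(\mathrm{lk}(\alpha_j^-,\alpha_k))$, and the orientation of the disks bounded by the $\beta_i$ (chosen so the intersection number with $\alpha_i$ is $+1$). One clean way to organize this is to work inside the product sutured manifold $P=N(R,E(L))\cong R\times[-1,1]$, where $R_+(\delta)=R\times\{1\}$, $R_-(\delta)=R\times\{-1\}$, and the disk $\Delta_k$ bounded by $\beta_k$ can be taken as (a slightly pushed-in copy of) $\{*\}\times[-1,1]$ fibered over a point of $\alpha_k$; then the inclusion-induced class $\iota_\pm(\alpha_j^\pm)$ in $H_1(M)=H_1(E(L)\setminus\mathrm{int}\,P)$ is computed by counting, with sign, how $\alpha_j^\pm$ links $\alpha_k$ in $S^3$, which is literally a Seifert matrix entry. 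Alternatively, one may invoke Lemma \ref{lem:basis} together with the Mayer--Vietoris map $\phi$ appearing in its proof and the long exact sequence of the pair, reducing everything to a statement about the map $H_1(\partial M)\to H_1(M)$ written in the bases already produced; but the linking-number computation is more transparent and is, I believe, the intended argument, essentially reproducing the classical fact (see Crowell--Trotter) that the two inclusions $R_\pm\hookrightarrow E(L)\setminus R$ are given on first homology by $\mathcal S$ and $\mathcal S^T$.
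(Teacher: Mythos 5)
Your argument is correct and is essentially the argument the paper relies on: the paper's "proof" is just a citation to Burde--Zieschang (Lemma 8.6) and Lickorish (p.~53), both of which establish the formulas exactly as you do, by noting that $\{\beta_k\}$ is dual to $\{\alpha_k\}$ under the linking pairing ($\mathrm{lk}(\beta_k,\alpha_i)=\delta_{ki}$) and then computing $\mathrm{lk}(\alpha_j^{+},\alpha_k)=\mathrm{lk}(\alpha_k^{-},\alpha_j)=a_{kj}$ and $\mathrm{lk}(\alpha_j^{-},\alpha_k)=a_{jk}$. So there is no gap; your bookkeeping of the push-off/transpose conventions matches the paper's conventions.
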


\begin{proof}
See the proof of \cite[Lemma 8.6]{bz} or \cite[Page 53]{lickorish}. 
\end{proof}

By Lemma \ref{lem:inclusion}, we have: 

\begin{lemma}\label{lem:seifertmatrix1}
The maps $i_{\pm} : H_{1}(\Sigma_{g,n})\to H_{1}(M)$ 
$($resp. $i_{\pm} : H_{1}(\Sigma_{g,n};\mathbb Q)\to 
H_{1}(M;\mathbb Q))$ are isomorphisms 
if and only if $\mathcal{S}$ is invertible over $\mathbb Z$ 
$($resp. over $\mathbb Q)$. 
\end{lemma}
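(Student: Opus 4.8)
The plan is to prove Lemma \ref{lem:seifertmatrix1} by combining Lemma \ref{lem:inclusion} with Lemma \ref{lem:basis}(2) in a purely linear-algebraic way, once the relevant change-of-basis matrices have been identified. By Lemma \ref{lem:basis}(2), $\{\beta_1,\dots,\beta_{2g+n-1}\}$ is a basis of $H_1(M)$, and by construction the $\alpha_j$ (under $\vartheta$) form a basis of $H_1(\Sigma_{g,n})$ corresponding to the basis $\{\alpha_j^\pm\}$ of $H_1(R_\pm(\delta))$ via $\iota_\pm$. So the maps $i_\pm : H_1(\Sigma_{g,n}) \to H_1(M)$ are represented, with respect to these fixed bases, exactly by the matrices whose columns are the coefficient vectors appearing in Lemma \ref{lem:inclusion}.

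First I would make precise how $i_\pm$ factors through $\iota_\pm$. Recall from Section \ref{section:fibre} that $i_\pm = \iota_\pm \circ \vartheta$, where $\vartheta$ carries the basis $\{\alpha_j\}$ of $H_1(\Sigma_{g,n})$ to the basis $\{\alpha_j\}$ of $H_1(R)$, and the projections $\iota_\pm$ send $\alpha_j$ to $\alpha_j^\pm \in H_1(R_\pm(\delta)) \subset H_1(\partial M)$, followed by the inclusion-induced map $H_1(\partial M) = H_1(\partial P) \to H_1(M)$. Thus on the level of homology, $i_+(\alpha_j)$ is precisely $\iota_+(\alpha_j^+)$ and $i_-(\alpha_j)$ is precisely $\iota_-(\alpha_j^-)$ in $H_1(M)$. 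Plugging in Lemma \ref{lem:inclusion}, the matrix of $i_+$ with respect to $\{\alpha_j\}$ and $\{\beta_k\}$ is the transpose ${}^t\mathcal{S} = (a_{kj})$, and the matrix of $i_-$ is $\mathcal{S} = (a_{jk})$.

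From here the conclusion is immediate: a $\mathbb{Z}$-linear (resp.\ $\mathbb{Q}$-linear) map between free $\mathbb{Z}$-modules (resp.\ $\mathbb{Q}$-vector spaces) of the same rank, expressed in chosen bases, is an isomorphism if and only if its matrix is invertible over $\mathbb{Z}$ (resp.\ over $\mathbb{Q}$); and a square matrix is invertible over $\mathbb{Z}$ iff its transpose is, and likewise over $\mathbb{Q}$. Hence $i_+$ is an isomorphism iff ${}^t\mathcal{S}$ is invertible iff $\mathcal{S}$ is invertible, and $i_-$ is an isomorphism iff $\mathcal{S}$ is invertible, giving the claimed equivalence in both the integral and rational cases, and moreover showing $i_+$ is an isomorphism exactly when $i_-$ is.

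I do not expect a genuine obstacle here; the lemma is essentially a bookkeeping consequence of the preceding two lemmas. The one point requiring care is the transpose: $i_+$ and $i_-$ are represented by $\mathcal{S}$ and its transpose rather than by the same matrix, so one must note explicitly that invertibility of $\mathcal{S}$ over a given ground ring is equivalent to invertibility of ${}^t\mathcal{S}$ (both have the same determinant, or more simply $({}^t\mathcal{S})^{-1} = {}^t(\mathcal{S}^{-1})$ when either side exists). The only other thing to verify is that $\vartheta_* : H_1(\Sigma_{g,n}) \to H_1(R)$ is an isomorphism sending basis to basis, which is automatic since $\vartheta$ is a diffeomorphism chosen to carry $\alpha_j$ to $\alpha_j$; once that is observed, the rest is the elementary linear algebra above.
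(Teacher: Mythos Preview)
Your proposal is correct and follows essentially the same approach as the paper, which simply records that the lemma is an immediate consequence of Lemma~\ref{lem:inclusion} (together with the basis statement of Lemma~\ref{lem:basis}(2)). Your write-up just makes explicit the linear-algebra bookkeeping the paper leaves implicit; the minor point about which of $i_\pm$ is represented by $\mathcal{S}$ versus $\mathcal{S}^{T}$ depends on matrix conventions and, as you note, is irrelevant to the conclusion.
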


\medskip
\begin{proof}[Proof of Proposition $\ref{thm:homologicalfibre}$]
Suppose that the complementary sutured manifold $M$ for 
$R$ is a rational homology cylinder. 
Then $\mathcal{S}$ is invertible over $\mathbb{Q}$ 
by Lemma \ref{lem:seifertmatrix1} 
and $(\mathcal{S}^{T})^{-1} \mathcal{S}$ represents 
$\sigma (M)$, where 
$\mathcal{S}^{T}$ denotes the transpose of $\mathcal{S}$. 
By definition, we have 
$\Delta_{L}(t)=\det(t \mathcal{S}-\mathcal{S}^{T})$, 
and now 
\begin{equation}\label{eq:factor}
\Delta_{L}(t)=\det(t \mathcal{S}-\mathcal{S}^{T})
=\det(\mathcal{S}^{T}) 
\det(t (\mathcal{S}^{T})^{-1} \mathcal{S}-I_{2g+n-1})
\end{equation}
\noindent
holds. Since $\det((\mathcal{S}^{T})^{-1} \mathcal{S})=1$, 
the polynomial $\det((t (\mathcal{S}^{T})^{-1} \mathcal{S}-I_{2g+n-1})$ 
is of degree $2g+n-1$ and so is $\Delta_{L}(t)$. 
Therefore $L$ is rationally homologically fibered. 
If moreover $M$ is a homology cylinder, 
then we have $\det(\mathcal{S})=\pm 1$ and 
$\Delta_{L}(0)=\det(-\mathcal{S}^T)=\pm 1$. Hence 
$L$ is homologically fibered.

Conversely, let $L$ be a rationally homologically fibered link and 
$R$ be a minimal genus, say $g$, connected Seifert surface. 
Then, the degree of 
$\Delta_{L}(t)$ is $2g+n-1$. 
Since $\Delta_{L}(t)=\det(t\mathcal{S}-\mathcal{S}^{T})$ 
and $0\neq \Delta_{L}(0)=\det(-\mathcal{S}^{T})$, 
the complementary 
sutured manifold for $R$ is a rational homology cylinder 
by Lemma \ref{lem:seifertmatrix1}. 
Further, if $L$ is homologically fibered, we have 
$\pm 1=\Delta_{L}(0)=\det(-\mathcal{S}^{T})=\det(-\mathcal{S})$. 
This completes the proof. 
\end{proof}

\begin{example}\label{rmk:alternatinglink} 
It is known (\cite{crowell}, \cite{murasugi58}) that 
alternating links satisfy the condition (i) in Definition \ref{def:HFKnot}. 
Moreover it was shown by Murasugi 
\cite{murasugi} (see also 13.26 (c) in \cite{bz}) that 
an alternating link is fibered if and only if 
$\Delta_{L}(0)=\pm 1$. 
Therefore, if a homologically fibered link $L$ is not 
fibered, then it is non-alternating. 
\end{example}

\begin{example}\label{ex:pretzel1}
Let $p,\,q$ and $r$ be odd integers and let $P(p,q,r)$ 
be the pretzel knot  of type $\{p,q,r\}$. See Figure \ref{pretzel1}. 
We assume that one of $p,\,q,\,r$, say $p$, is negative and 
the others are positive since 
our main objects are non-alternating knots 
(Example \ref{rmk:alternatinglink}). 
It is well-known that 
the Alexander polynomial of $P(p,q,r)$ is given by 
\[\frac{1}{4}\Big((pq+qr+rp)(t^2-2t+1)+t^2+2t+1\Big).\]
In the range of values: $-100<p\le -3,\, 3\le q\le r<100$, 
the pretzel knots of the following 22 types are homologically fibered knots. 
\begin{align*}
&\{-3,5,9\},\{-5,7,19\},\{-7,9,33\},\{-9,11,51\},\{-9,15,23\},\{-11,13,73\},\\
&\{-13,15,99\},\{-15,21,53\},\{-19,33,45\},\{-21,27,95\},\{-23,37,61\},\\
&\{-33,59,75\},\{-3,5,5\},\{-5,7,15\},\{-7,9,29\},\{-9,11,47\},\{-11,13,69\},\\
&\{-13,15,95\},\{-15,25,37\},\{-25,35,87\},\{-29,51,67\},\{-37,59,99\}.
\end{align*}
\noindent
The minimal genus (genus 1) Seifert surface for the pretzel knot of this type 
is unique up to isotopy \cite{gi}. 
\end{example}

\begin{figure}[h]
\centering
\includegraphics[width=.7\textwidth]{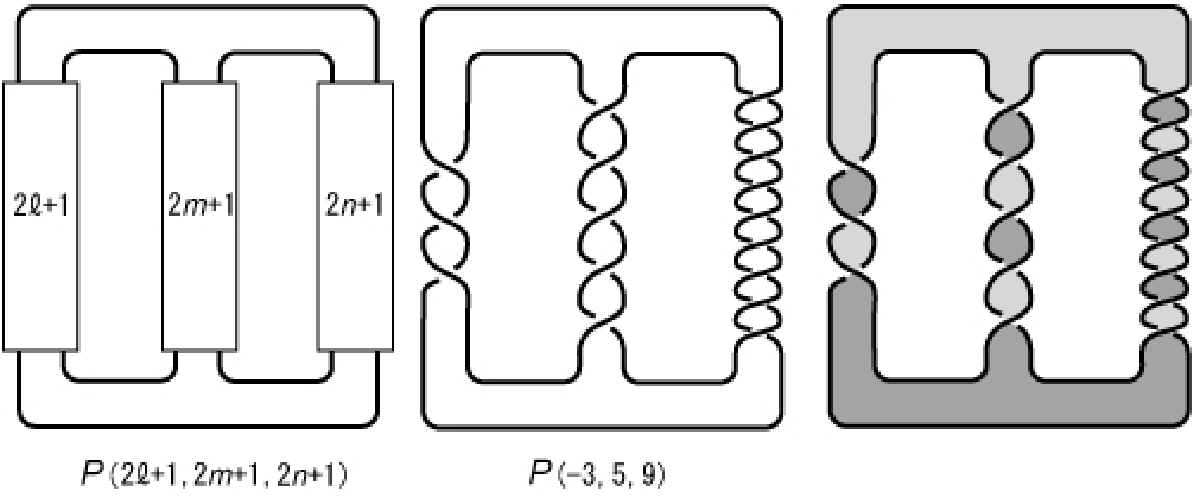}
\caption{Standard diagram of Pretzel knots}\label{pretzel1}
\end{figure}

\begin{example} 
Consider the pretzel knot of type $\{p,q,r,s,u\}$, where 
$p,\,q,\,r,\,s,\,u$ are odd integers. 
The leading coefficient of the Alexander polynomial is
\[\frac{1}{16}(pq+pr+ps+pu+qr+qs+qu+rs+ru+su+pqrs+pqru+pqsu+prsu+qrsu).\]
In the range of values: $-500<p\le -3,\,3\le q\le r\le s\le u<500$, 
the following 8 types give the homologically fibered pretzel knots. 
\begin{align*}
&\{-3,9,9,9,85\},\{-5,15,15,15,411\},\{-7,17,17,45,261\},\\
&\{-9,15,35,71,467\},\{-33,75,127,151,403\},\{-39,113,161,165,221\},\\
&\{-9,23,27,35,411\},\{-37,107,107,179,363\}.
\intertext{In the range of values: 
$-300<p\le q\le -3,\,3\le r\le s\le u<300$, 
the following 15 types give the homologically fibered pretzel knots.}
&\{-15,-3,5,5,125\},\{-5,-5,3,19,159\},\{-69,-5,7,15,151\},\\
&\{-31,-7,9,17,177\},\{-27,-11,9,85,205\},\{-15,-3,5,5,129\},\\
&\{-5,-5,3,19,163\},\{-53,-5,7,15,91\},\{-177,-5,7,31,31\},\\
&\{-257,-5,7,19,99\},\{-235,-7,17,17,33\},\{-15,-11,13,13,265\},\\
&\{-275,-11,13,109,117\}, \{-37,-33,23,111,207\},\{-121,-33,39,107,279\}.
\end{align*}
\end{example}

\begin{example}\label{ex:gkm}
Let $K$ be the trefoil knot, which is fibered. 
We take the basis $\{\alpha_{1}, \alpha_{2}\}$ of $H_1 (R)$ 
for the minimal genus Seifert surface $R$ as in Figure \ref{fig:3-1}. 
We cut the band corresponding to $\alpha_{2}$, make it knotted, 
and paste to the original part again, then we have a new knot 
with a Seifert surface of the same genus. 
Just before pasting, we twist the band so that 
the Seifert matrix (linking number) does not change, then 
we can obtain a knot whose Alexander polynomial is the same as $K$. 
By this method, we can obtain many homologically fibered knots.
\end{example}

\begin{figure}[h]
\centering
\includegraphics[width=.85\textwidth]{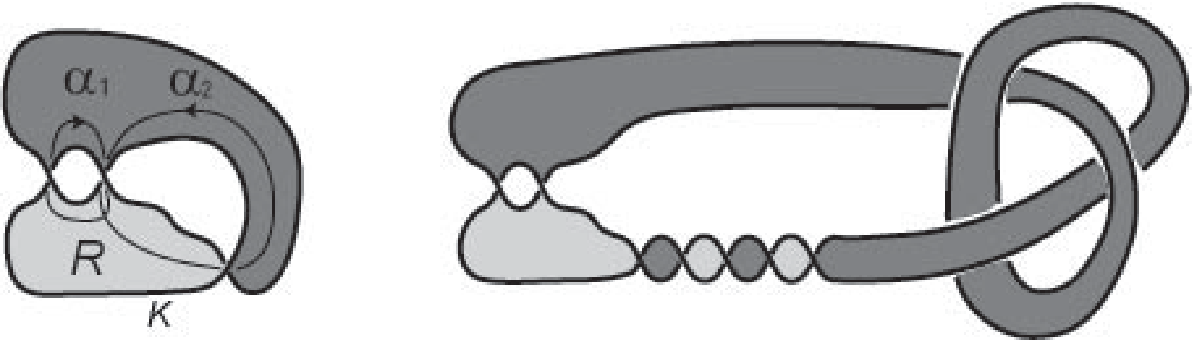}
\caption{Making a new homologically fibered knot}\label{fig:3-1}
\end{figure}

\begin{example}\label{ex:FK}
It is known that a knot $K$ with 11 or fewer crossings is fibered 
if and only if 
$K$ is homologically fibered. 
Among 12 crossing knots there are thirteen knots  
which are not fibered but homologically fibered.  
See Friedl-Kim\cite{fk} for the detail.
\end{example}


\section{Invariants of homology cylinders and 
fibering obstructions of links}\label{sec:magnus}
In this section, 
we review some invariants of homology cylinders from \cite{sakasai}. 
We begin by summarizing our notation. 
For a matrix $A$ with entries in a ring $\mathcal{R}$, 
and a ring homomorphism $\rho:\mathcal{R} \to \mathcal{R}'$, 
we denote by ${}^{\rho} A$ the 
matrix obtained from $A$ by applying $\rho$ to each entry. 
When $\mathcal{R}=\mathbb{Z} G$ (or its fractional field if it exists) 
for a group $G$, 
we denote by $\overline{A}$ 
the matrix obtained from $A$ by applying the involution induced 
from $(x \mapsto x^{-1},\ x \in G)$ to each entry. 
For a module $\mathcal{M}$, we write $\mathcal{M}^n$ 
for the module of column vectors with $n$ entries. 
For a finite cell complex $X$, we denote by $\widetilde{X}$ 
its universal covering. We take a base point $p$ of $X$. 
The group $\pi:=\pi_1 (X,p)$ acts on $\widetilde{X}$ from 
the {\it right} as its deck transformations. 
Then the cellular chain complex $C_{\ast} (\widetilde{X})$ of 
$\widetilde{X}$ 
becomes a right $\mathbb{Z} \pi$-module. 
For each left $\mathbb{Z}\pi$-algebra $\mathcal{R}$, 
the twisted chain complex $C_{\ast} (X;\mathcal{R})$ is given 
by the tensor product of 
the right $\mathbb{Z}\pi$-module $C_{\ast} (\widetilde{X})$ and 
the left $\mathbb{Z}\pi$-module $\mathcal{R}$, 
so that $C_{\ast} (X;\mathcal{R})$ and 
$H_{\ast} (X;\mathcal{R})$ are right $\mathcal{R}$-modules.

Let $M=(M,i_+,i_-) \in \mathcal{C}_{g,n}^\mathbb{Q}$ 
and let $\rho_\Gamma:\pi_1 (M) \to \Gamma$ be a homomorphism 
whose target $\Gamma$ 
is a {\it poly-torsion-free abelian $($PTFA$)$ group}, where 
a group $\Gamma$ is said to be PTFA if 
it has a sequence 
\[\Gamma=\Gamma_0 \triangleright \Gamma_1 \triangleright 
\cdots \triangleright \Gamma_n = \{1\}\]
whose successive quotients $\Gamma_i/\Gamma_{i+1}$ $(i \ge 0)$ 
are all torsion-free abelian. 
Using a PTFA group $\Gamma$ has an advantage that 
its group ring $\mathbb{Z} \Gamma$ (or $\mathbb{Q} \Gamma$) 
is an {\it Ore domain} so that it is embedded into 
the {\it right field} 
\[\mathcal{K}_\Gamma:= \mathbb{Z}\Gamma (\mathbb{Z}\Gamma - \{0\})^{-1}
=\mathbb{Q}\Gamma (\mathbb{Q}\Gamma - \{0\})^{-1}\] 
{\it of fractions}. We refer to Cochran-Orr-Teichner 
\cite[Section 2]{cot} and Passman \cite{pa} 
for generalities of PTFA groups and localizations of their 
group rings. 
A typical example of PTFA groups associated with $M$ is 
the free part $\Gamma = H_1 (M)/\mbox{(torsion)}\cong \mathbb{Z}^{2g+n-1}$ 
of $H_1 (M)$, 
where $\mathcal{K}_\Gamma$ 
is isomorphic to the field of rational functions with $2g+n-1$ variables. 
The following lemma can be verified by applying Cochran-Orr-Teichner 
\cite[Proposition 2.10]{cot}. However we here give a proof for later use. 
\begin{lemma}\label{lem:relative}
The maps $i_\pm: H_\ast (\Sigma_{g,n},p ;i_{\pm}^\ast \mathcal{K}_\Gamma) \to 
H_\ast (M,p ;\mathcal{K}_\Gamma)$ are isomorphisms as 
right $\mathcal{K}_\Gamma$-vector spaces. 
\end{lemma}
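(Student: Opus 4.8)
The plan is to reduce the statement to a homological computation using the long exact sequences of the pairs $(\Sigma_{g,n},p)$ and $(M,p)$ together with the mapping cone of $i_\pm$, and to control the relevant homology groups via the known structure of $\mathcal{K}_\Gamma$ as a skew field over an Ore domain. Recall that since $\Gamma$ is PTFA, $\mathbb{Q}\Gamma$ is an Ore domain and $\mathcal{K}_\Gamma$ is flat over $\mathbb{Q}\Gamma$; hence $H_\ast(X;\mathcal{K}_\Gamma)\cong H_\ast(X;\mathbb{Q}\Gamma)\otimes_{\mathbb{Q}\Gamma}\mathcal{K}_\Gamma$ for any finite CW complex $X$ with a map $\pi_1(X)\to\Gamma$. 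Because $\mathcal{K}_\Gamma$ is a skew field, all these homology groups are free (= finite-dimensional) right $\mathcal{K}_\Gamma$-modules, so it suffices to check that $i_\pm$ induces an isomorphism, and for this it is enough to show the mapping cone $C_\ast$ of $i_\pm:C_\ast(\Sigma_{g,n},p;\mathcal{K}_\Gamma)\to C_\ast(M,p;\mathcal{K}_\Gamma)$ is acyclic.

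First I would compute the source. The spine $S$ of $\Sigma_{g,n}$ fixed before Remark \ref{rmk:symplectic} shows that $(\Sigma_{g,n},p)$ is homotopy equivalent (rel $p$) to a wedge of $2g+n-1$ circles, so $H_\ast(\Sigma_{g,n},p;\mathcal{K}_\Gamma)$ is concentrated in degree $1$, where it is free of rank $2g+n-1$, provided the composite $\pi_1(\Sigma_{g,n})\xrightarrow{(i_\pm)_\ast}\pi_1(M)\xrightarrow{\rho_\Gamma}\Gamma$ is used to form the coefficients; this is a standard Euler-characteristic-plus-vanishing-in-degree-$2$ argument (a $1$-complex has no $H_2$). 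Next, for the target, I would use that $M$ is a rational homology cylinder: by the definition of $\mathcal{C}_{g,n}^\mathbb{Q}$ together with Lemma \ref{lem:basis}(3) (which, although stated for the complementary sutured manifold, is really the statement that $H_\ast(M;\mathbb{Q})$ agrees with that of a surface — and in the abstract setting this is built into the definition of homology cylinder), one gets $H_\ast(M,p;\mathbb{Q})$ concentrated in degree $1$ with rank $2g+n-1$. The key input is then a PTFA coefficient-lifting argument: since $i_+:H_\ast(\Sigma_{g,n};\mathbb{Q})\to H_\ast(M;\mathbb{Q})$ is an isomorphism, the mapping cone of $i_+$ with $\mathbb{Q}$-coefficients is acyclic, hence (this is exactly the content of \cite[Proposition 2.10]{cot}) the mapping cone with $\mathcal{K}_\Gamma$-coefficients is acyclic too; here one uses that over the skew field $\mathcal{K}_\Gamma$ a chain complex of finite-dimensional vector spaces is acyclic iff its Euler characteristic vanishes and it is acyclic after a faithfully flat/rational base change, and the precise mechanism is that $\mathcal{K}_\Gamma$ is flat over $\mathbb{Q}\Gamma$ and $H_\ast(-;\mathbb{Q}\Gamma)\otim_{\mathbb{Q}\Gamma}\mathcal{K}_\Gamma$ detects acyclicity because torsion $\mathbb{Q}\Gamma$-modules die.

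The main obstacle I expect is making the last implication rigorous: from ``$i_\pm$ is a $\mathbb{Q}$-homology equivalence'' to ``$i_\pm$ is a $\mathcal{K}_\Gamma$-homology equivalence''. The clean way is to invoke \cite[Proposition 2.10]{cot} verbatim — it says precisely that if $f\colon X\to Y$ induces an isomorphism on $H_\ast(-;\mathbb{Q})$ and $\pi_1(Y)\to\Gamma$ is given, then $f$ induces an isomorphism on $H_\ast(-;\mathcal{K}_\Gamma)$ — applied to $i_\pm\colon\Sigma_{g,n}\hookrightarrow M$. One should check the hypotheses: $i_\pm$ is $\pi_1$-compatible by construction, and $H_\ast(\Sigma_{g,n};\mathbb{Q})\xrightarrow{\cong}H_\ast(M;\mathbb{Q})$ holds by the definition of rational homology cylinder. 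Passing from absolute to relative ($(\Sigma_{g,n},p)$, $(M,p)$) homology is harmless since $H_\ast(p;\mathcal{K}_\Gamma)$ is $\mathcal{K}_\Gamma$ in degree $0$ and maps isomorphically both ways, so the five lemma applied to the pair sequences gives the relative statement from the absolute one. Finally, the rank count $\dim_{\mathcal{K}_\Gamma}H_1(\Sigma_{g,n},p;\mathcal{K}_\Gamma)=2g+n-1$ and vanishing in all other degrees follows from the $1$-dimensionality of the spine, and the isomorphism transports it to $M$. This disposes of both $i_+$ and $i_-$, since $i_-$ is also an orientation-reversing embedding inducing a rational homology isomorphism, so the identical argument applies.
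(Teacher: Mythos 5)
Your proposal is correct, but it takes precisely the shortcut that the paper flags and then deliberately avoids: the paper states that the lemma ``can be verified by applying Cochran-Orr-Teichner \cite[Proposition 2.10]{cot}'', which is the step you invoke verbatim, and then gives a direct proof ``for later use''. The paper's own argument is constructive: it reduces to showing $H_\ast(M,i_+(\Sigma_{g,n});\mathcal{K}_\Gamma)=0$, triangulates $M$ extending a triangulation of $\partial M$, collapses a maximal tree so that $(M,i_+(S))$ is replaced by a simple homotopy equivalent pair $(M',S')$ of $2$-complexes with equally many $1$- and $2$-cells, reads off the presentation (\ref{eq:pre_adm}), and proves that the resulting square Fox Jacobian ${}^{\rho_\Gamma}J$ is invertible over $\mathcal{K}_\Gamma$ by augmenting to $\mathbb{Q}$ (where invertibility is exactly the rational homology cylinder condition $H_1(M,i_+(\Sigma_{g,n});\mathbb{Q})=0$) and citing Strebel \cite{strebel} for groups in the class $D(\mathbb{Z})$ --- the same algebra that underlies the COT proposition. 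What the direct route buys is exactly the data your argument does not produce: the explicit presentation, the simple homotopy equivalence, and the matrix $J$, all of which are reused in Proposition \ref{prop:Formula} to identify $\tau_\Gamma^+(M)$ and to compute $r_\Gamma(M)$ from admissible presentations; your route buys brevity, and your five-lemma passage from absolute to relative homology (the paper instead uses the exact sequence of the triple $(M,i_\pm(\Sigma_{g,n}),p)$) is fine. One caveat: your heuristic sentence claiming acyclicity over $\mathcal{K}_\Gamma$ follows from ``Euler characteristic zero plus faithfully flat/rational base change'' is not correct as stated --- $\mathbb{Q}$ with the augmentation action is not a base change of $\mathcal{K}_\Gamma$, and this lifting of rational acyclicity is exactly the nontrivial content of COT 2.10/Strebel --- but since you ultimately rest the step on that citation, this is a flaw in the commentary rather than in the proof.
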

\begin{proof}
For the proof, it suffices to show that 
$H_\ast (M, i_+(\Sigma_{g,n}) ;\mathcal{K}_\Gamma)=0$. 
Since the spine $S$ fixed in Section \ref{section:cylinder} 
is a deformation retract of $\Sigma_{g,n}$ relative to $p$, 
we have $H_\ast (M, i_+(\Sigma_{g,n}) ;\mathcal{K}_\Gamma) = 
H_\ast (M, i_+(S) ;\mathcal{K}_\Gamma)$. Now we compute the latter. 

Triangulate $\Sigma_{g,n}$ smoothly, so that the spine $S$ 
is the union of its edges. 
By gluing two copies of this triangulated surface, 
we obtain a triangulation $\mathfrak{t}$ of $\partial M$. 
A theorem of Cairns and Whitehead shows that 
there exists a triangulation $\widehat{\mathfrak{t}}$ of the entire $M$ 
which extends $\mathfrak{t}$. 
Starting from a 2-simplex in $\partial M$, we can deform $M$ 
onto a subcomplex $\widehat{\mathfrak{t}}$ of its 2-skeleton. 
In this deformation, the 1-skeleton is fixed pointwise. 
Take a maximal tree $T$ 
of $\mathfrak{t}$ such that 
$T$ includes all but one sub-edges of each loop of $S$. 
We extend $T$ to a maximal tree $\widetilde{T}$ of 
$\widetilde{\mathfrak{t}}$ and collapse $\widetilde{T}$ to a point. 
Then we obtain a 2-dimensional CW-complex $M'$ having 
only one vertex. By construction, the bouquet $i_+(S)$ is 
mapped onto a bouquet $S'$ in $M'$ with a natural 
one-to-one correspondence between their loops, 
and $(M',S')$ is simple 
homotopy equivalent to $(M,i_+(S))$. 
>From this cell structure, we can read a presentation 
of $\pi_1 (M) = \pi_1 (M')$ as 
\begin{equation}\label{eq:pre_adm}\langle y_1 ,\ldots, y_k, 
i_+ (\gamma_1),\ldots,i_+ (\gamma_{2g+n-1}) \mid 
s_1, \ldots, s_k
\rangle
\end{equation}
for some $k$, where we identify $i_+(\gamma_j)$ $(1 \le j \le 2g+n-1)$ 
with its image in $M'$. 

We have 
$H_\ast (M, i_+(S) ;\mathcal{K}_\Gamma)=
H_\ast (M', S';\mathcal{K}_\Gamma)$. 
The relative complex $(M',S')$ 
consists of only the same number of 1-cells and 2-cells, so that 
the relative chain complex 
$C_\ast (M',S';\mathcal{K}_\Gamma)$ is 
of the form 
\[0 \longrightarrow (\mathcal{K}_\Gamma)^k 
\stackrel{{}^{\rho_\Gamma}J \cdot}{\longrightarrow} 
(\mathcal{K}_\Gamma)^k \longrightarrow 0\]
with $J:= \overline{\left({\displaystyle\frac{\partial 
s_j}{\partial y_i}} \right)}_{1 \le i,j \le k}$. 
The matrix ${}^{\rho_\Gamma} J$ has its entries in $\mathbb{Z} \Gamma$. 
To check the invertibility over $\mathcal{K}_\Gamma$ of this matrix, 
we apply the augmentation map $\mathfrak{a}:\mathbb{Z} \Gamma 
\to \mathbb{Z}$ to each entry. 
Then we obtain a presentation matrix of $H_1 (M,i_+(\Sigma_{g,n}))$. 
Since $H_1 (M,i_+(\Sigma_{g,n});\mathbb{Q})=0$, the matrix 
${}^{\mathfrak{a} \circ \rho_\Gamma} J$ is invertible over $\mathbb{Q}$. 
Then it follows from Strebel \cite[Section 1]{strebel} that 
${}^{\rho_\Gamma} J$ is invertible over $\mathcal{K}_\Gamma$. 
($\Gamma$ belongs to the class $D(\mathbb{Z})$ 
in the notation of \cite{strebel}.) 
This completes the proof 
\end{proof}

We use Lemma \ref{lem:relative} to construct the following two invariants 
of rational homology cylinders. 
The first one is the Magnus matrix, which was defined in
\cite{sakasai}. We have
\[H_1 (\Sigma_{g,n},p;i_{\pm}^\ast \mathcal{K}_\Gamma) \cong 
H_1 (S,p;i_{\pm}^\ast \mathcal{K}_\Gamma) = 
C_1 (\widetilde{S}) \otimes_{\pi_1 (\Sigma_{g,n})} 
i_{\pm}^\ast \mathcal{K}_\Gamma \cong \mathcal{K}_\Gamma^{2g+n-1}\]
with a basis
\[\{ \widetilde{\gamma}_1 \otimes 1, \ldots , 
\widetilde{\gamma}_{2g+n-1} \otimes 1\} 
\subset C_1 (\widetilde{S}) \otimes_{\pi_1 (\Sigma_{g,n})} 
i_{\pm}^\ast \mathcal{K}_\Gamma\]
\noindent 
as a right $\mathcal{K}_\Gamma$-module. 
Here we fix a lift $\widetilde{p}$ of $p$ as a 
base point of $\widetilde{S}$, and 
denote by $\widetilde{\gamma}_i$ the lift of 
the oriented loop $\gamma_i$. 

\begin{definition}\label{def:Mag2}
For $M=(M,i_+,i_-) \in \mathcal{C}_{g,n}^\mathbb{Q}$, 
the {\it Magnus matrix} 
\[r_\Gamma (M) \in GL(2g+n-1,\mathcal{K}_\Gamma)\]
of $M$ is defined as the representation matrix of 
the right $\mathcal{K}_\Gamma$-isomorphism
\[\mathcal{K}_\Gamma^{2g+n-1} \cong 
H_1 (\Sigma_{g,n},p;i_-^\ast \mathcal{K}_\Gamma) \xrightarrow[i_-]{\cong} 
H_1 (M,p;\mathcal{K}_\Gamma) \xrightarrow[i_+^{-1}]{\cong} 
H_1 (\Sigma_{g,n},p;i_+^\ast\mathcal{K}_\Gamma) 
\cong \mathcal{K}_\Gamma^{2g+n-1},\]
where the first and the last isomorphisms use 
the bases mentioned above.
\end{definition}

\begin{example}\label{fox}
For $(\Sigma_{g,n} \times [0,1], 
\mathrm{id} \times 1, \varphi \times 0) \in \mathcal{M}_{g,n} \subset 
\mathcal{C}_{g,n}$, 
we can check that 
\[r_\Gamma((\Sigma_{g,n} \times [0,1], 
\mathrm{id} \times 1, \varphi \times 0)) = 
\overline{
\sideset{^{\rho_\Gamma}\!}{}
{\mathop{\left({\displaystyle\frac{\partial 
\varphi(\gamma_j)}{\partial \gamma_i}} \right)}\nolimits}
}_{1 \le i,j \le 2g+n-1}\]
from the definition or by using Proposition \ref{prop:Formula} below. 
>From this, we see that 
$r_\Gamma$ extends the Magnus representation of $\mathcal{M}_{g,1}$ 
in Morita \cite{morita}. 
\end{example}

Next we introduce a torsion invariant. Since the relative complex 
$C_\ast (M,i_+(\Sigma_{g,n});\mathcal{K}_\Gamma)$ 
obtained from any cell decomposition of $(M,i_+(\Sigma_{g,n}))$ 
is acyclic by Lemma \ref{lem:relative}, 
we can consider its torsion 
$\tau(C_\ast (M,i_+(\Sigma_{g,n});$ $\mathcal{K}_\Gamma))$. 
We refer to Milnor \cite{milnor} and 
Turaev \cite{turaev} for generalities of torsions and 
related groups from algebraic K-theory. 
Recall that torsions are invariant 
under simple homotopy equivalences. In particular, they are 
topological invariants. 
\begin{definition}
The $\Gamma$-{\it torsion} of $M=(M,i_+,i_-) 
\in \mathcal{C}_{g,n}^\mathbb{Q}$ is given by 
\[\tau_{\Gamma}^+ (M):=
\tau(C_\ast (M,i_+(\Sigma_{g,n});\mathcal{K}_\Gamma))
\in K_1 (\mathcal{K}_\Gamma)/\pm \rho_\Gamma(\pi_1 (M)).\]
\end{definition}

Now we recall a method for computing 
$r_\Gamma(M)$ and $\tau_{\Gamma}^+ (M)$ by 
following \cite[Section 3.2]{sakasai}, 
which is based on 
the one for the Gassner matrix (using commutative rings) 
of a string link by Kirk-Livingston-Wang \cite{klw} and 
Le Dimet \cite[Section 1.1]{ld}. 

Let $(M,i_+,i_-) \in \mathcal{C}_{g,n}^\mathbb{Q}$. 
An {\it admissible presentation} of $\pi_1 (M)$ is defined to be 
the one of the form 
\begin{equation}\label{admissible}
\langle i_- (\gamma_1),\ldots,i_- (\gamma_{2g+n-1}), 
z_1 ,\ldots, z_l, 
i_+ (\gamma_1),\ldots,i_+ (\gamma_{2g+n-1}) \mid 
r_1, \ldots, r_{2g+n-1+l}
\rangle
\end{equation}
for some integer $l$. 
That is, it is a finite presentation with deficiency $2g+n-1$ 
whose generating set 
contains $i_- (\gamma_1),\ldots,i_- (\gamma_{2g+n-1}), 
i_+ (\gamma_1),\ldots,i_+ (\gamma_{2g+n-1})$ and is ordered as above. 
One of the possible constructions 
of admissible presentations is obtained from 
the presentation (\ref{eq:pre_adm}) by 
adding generators $i_- (\gamma_1), \ldots, 
i_- (\gamma_{2g+n-1})$ 
together with relations. 
(There also exists a construction using Morse theory.) 

Given an admissible presentation of $\pi_1 (M)$ 
as in (\ref{admissible}), 
we define $(2g+n-1) \times (2g+n-1+l)$, $l \times (2g+n-1+l)$ and 
$(2g+n-1) \times (2g+n-1+l)$ matrices $A,B,C$ 
over $\mathbb{Z} \pi_1 (M)$ by 
\[A=\overline{
\left(\frac{\partial r_j}{\partial i_-(\gamma_i)}
\right)}_{\begin{subarray}{c}
{}1 \le i \le 2g+n-1\\
1 \le j \le 2g+n-1+l
\end{subarray}}, \ 
B=\overline{
\left(\frac{\partial r_j}{\partial z_i}
\right)}_{\begin{subarray}{c}
{}1 \le i \le l\\
1 \le j \le 2g+n-1+l
\end{subarray}}, \ 
C=\overline{
\left(\frac{\partial r_j}{\partial i_+(\gamma_i)}
\right)}_{\begin{subarray}{c}
{}1 \le i \le 2g+n-1\\
1 \le j \le 2g+n-1+l
\end{subarray}}.\]

\begin{proposition}\label{prop:Formula}
As matrices with entries in $\mathcal{K}_\Gamma$, we have the following. 

\begin{itemize}
\item[$(1)$] The square matrix 
$\sideset{^{\rho_\Gamma}\!\!}{}
{\mathop{\begin{pmatrix} A \\ B \end{pmatrix}}\nolimits}$ 
is invertible and 
$\tau_{\Gamma}^+ (M)=\sideset{^{\rho_\Gamma}\!\!}{}
{\mathop{\begin{pmatrix} A \\ B \end{pmatrix}}\nolimits}$. 

\item[$(2)$] $r_\Gamma(M) = 
-{}^{\rho_\Gamma}C 
\sideset{^{\rho_\Gamma}\!\!}{^{-1}}
{\mathop{\begin{pmatrix} A \\ B \end{pmatrix}}\nolimits}
\begin{pmatrix} I_{2g+n-1} \\ 0_{(l,2g+n-1)}\end{pmatrix}$. 
\end{itemize}
In particular, the invariants $\tau_{\Gamma}^+ (M)$ and 
$r_\Gamma(M)$ are computable 
from any admissible presentation of $\pi_1 (M)$. 
\end{proposition}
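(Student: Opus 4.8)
The plan is to extract both invariants directly from the twisted cellular chain complex of the $2$-complex attached to the given admissible presentation $(\ref{admissible})$, using Fox's free differential calculus to identify the boundary operator, exactly in the spirit of the computation in the proof of Lemma $\ref{lem:relative}$. First I would fix the topological model: the admissible presentation $(\ref{admissible})$ has a presentation $2$-complex $K$ with a single $0$-cell $p$, one $1$-cell per generator and one $2$-cell per relator, and $K$ contains as a subcomplex the bouquet $i_+(S)$ spanned by $p$ and the $1$-cells $i_+(\gamma_1),\dots,i_+(\gamma_{2g+n-1})$. By the explicit construction of admissible presentations out of $(\ref{eq:pre_adm})$ (adjoining the generators $i_-(\gamma_j)$ together with relations), or by the Morse-theoretic construction, $(K,i_+(S))$ is simple homotopy equivalent to $(M,i_+(\Sigma_{g,n}))$; here one uses that $\Sigma_{g,n}$ deformation retracts onto $S$ and the simple homotopy equivalence $(M',S')\simeq(M,i_+(S))$ produced in the proof of Lemma $\ref{lem:relative}$. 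Consequently torsions and twisted homology of $(M,i_+(\Sigma_{g,n}))$ and of $(M,p)$ can be read off $(K,i_+(S))$ and $(K,p)$.

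Next I would write out the chain complexes. Since $p$ is the only $0$-cell, $C_0(K,p;\mathcal{K}_\Gamma)=0$ and the sole nonzero differential is $\partial_2$, which by Fox calculus is left multiplication by the Fox Jacobian matrix of the presentation; ordering the generators as in $(\ref{admissible})$, its three row-blocks belonging to the $i_-(\gamma_i)$, the $z_i$ and the $i_+(\gamma_i)$ are precisely ${}^{\rho_\Gamma}A$, ${}^{\rho_\Gamma}B$, ${}^{\rho_\Gamma}C$. For the pair $(K,i_+(S))$ the $1$-cells $i_+(\gamma_i)$ collapse, so $C_\ast(K,i_+(S);\mathcal{K}_\Gamma)$ is the two-term complex
\[0\longrightarrow(\mathcal{K}_\Gamma)^{2g+n-1+l}\xrightarrow{\ {}^{\rho_\Gamma}\left(\begin{smallmatrix}A\\ B\end{smallmatrix}\right)\,\cdot\ }(\mathcal{K}_\Gamma)^{2g+n-1+l}\longrightarrow 0,\]
whose two modules have equal rank since the presentation has deficiency $2g+n-1$. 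For part $(1)$, this complex is acyclic by Lemma $\ref{lem:relative}$, whence ${}^{\rho_\Gamma}\left(\begin{smallmatrix}A\\ B\end{smallmatrix}\right)$ is invertible over $\mathcal{K}_\Gamma$; since the Reidemeister torsion of a two-term acyclic complex is the $K_1$-class of its single differential, this yields $\tau_\Gamma^+(M)={}^{\rho_\Gamma}\left(\begin{smallmatrix}A\\ B\end{smallmatrix}\right)$ in $K_1(\mathcal{K}_\Gamma)/\pm\rho_\Gamma(\pi_1(M))$.

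For part $(2)$, $H_1(M,p;\mathcal{K}_\Gamma)$ is the cokernel of the stacked differential $\partial_2$, and the maps $i_\pm\colon\mathcal{K}_\Gamma^{2g+n-1}\to H_1(M,p;\mathcal{K}_\Gamma)$ of Definition $\ref{def:Mag2}$ send the standard basis vectors to the classes $[i_\pm(\gamma_i)]$ of the corresponding $1$-cells, because the identification $H_1(S,p;i_\pm^\ast\mathcal{K}_\Gamma)\cong C_1(\widetilde{S})\otimes_{\pi_1(\Sigma_{g,n})}i_\pm^\ast\mathcal{K}_\Gamma$ carries $\widetilde{\gamma}_i\otimes 1$ to the cell $\gamma_i$ and $i_\pm$ is induced by the cellular inclusion of the bouquet. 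Using the invertibility of ${}^{\rho_\Gamma}\left(\begin{smallmatrix}A\\ B\end{smallmatrix}\right)$ from part $(1)$, I would right-multiply the relation matrix $\left(\begin{smallmatrix}{}^{\rho_\Gamma}A\\ {}^{\rho_\Gamma}B\\ {}^{\rho_\Gamma}C\end{smallmatrix}\right)$ by $\left({}^{\rho_\Gamma}\left(\begin{smallmatrix}A\\ B\end{smallmatrix}\right)\right)^{-1}$ (a base change of $C_2$ that does not alter the image of $\partial_2$), putting the relations into the form $\left(\begin{smallmatrix}I_{2g+n-1+l}\\ {}^{\rho_\Gamma}C\left({}^{\rho_\Gamma}\left(\begin{smallmatrix}A\\ B\end{smallmatrix}\right)\right)^{-1}\end{smallmatrix}\right)$; this expresses each $[i_-(\gamma_j)]$ and each $[z_i]$ as an explicit $\mathcal{K}_\Gamma$-combination of $\{[i_+(\gamma_i)]\}$, in particular showing the latter set to be a basis (as Lemma $\ref{lem:relative}$ already guarantees). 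Reading off the coefficients of $[i_-(\gamma_1)],\dots,[i_-(\gamma_{2g+n-1})]$ then gives $r_\Gamma(M)=-{}^{\rho_\Gamma}C\left({}^{\rho_\Gamma}\left(\begin{smallmatrix}A\\ B\end{smallmatrix}\right)\right)^{-1}\left(\begin{smallmatrix}I_{2g+n-1}\\ 0_{(l,2g+n-1)}\end{smallmatrix}\right)$. A convenient consistency check is Example $\ref{fox}$, recovered from the presentation with $l=0$ and $r_j=i_-(\gamma_j)^{-1}\varphi(\gamma_j)$.

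The step I expect to be the main obstacle is the simple homotopy claim of the first paragraph: that the $2$-complex of an \emph{arbitrary} admissible presentation, not merely of one preferred presentation, is simple homotopy equivalent to $(M,i_+(\Sigma_{g,n}))$ relative to $i_+(S)$, so that the topologically defined $\tau_\Gamma^+(M)$ is genuinely computed by the formula. This rests on the simple homotopy invariance of Reidemeister torsion together with the fact that all admissible presentations of $\pi_1(M)$ share the subcomplex $i_+(S)$ and are connected by Tietze transformations not involving it, combined with the concrete construction from $(\ref{eq:pre_adm})$. The rest — bookkeeping the involution and the degree and sign conventions so that the two displayed equalities hold literally rather than up to a unit — is routine.
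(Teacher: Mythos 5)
Your Fox-calculus computation of the two-term complex for the pair $(X(P),S_0)$ and your cokernel argument for part $(2)$ are essentially what the paper does (it omits $(2)$, citing \cite{sakasai}), but the step you yourself flag as the ``main obstacle'' is a genuine gap, and the fix you sketch for it does not work. The issue is precisely that an \emph{arbitrary} admissible presentation must give a $2$-complex that is \emph{simple} homotopy equivalent, rel the bouquet $i_+(S)$, to $(M,i_+(\Sigma_{g,n}))$. Your proposed justification via ``Tietze transformations not involving $i_+(S)$'' fails already at the first move: a Tietze transformation that adds or deletes a redundant relator changes the Euler characteristic of the presentation $2$-complex (it wedges on or removes an $S^2$ up to homotopy), so a chain of Tietze moves between two deficiency-$(2g+n-1)$ presentations gives neither a homotopy equivalence nor, a fortiori, a simple one between the two $2$-complexes; and even when a homotopy equivalence exists, its Whitehead torsion could a priori contribute a nontrivial factor to $\tau_\Gamma^+(M)$, which would ruin the claimed formula.

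The paper closes this gap with two inputs you do not have: first, Harlander--Jensen's theorem together with Epstein's computation that $\pi_1(M)$ has deficiency exactly $2g+n-1$, which produces a homotopy equivalence $f:X(P)\to M'$ carrying $S_0$ homeomorphically onto $S'$; second, and crucially, the vanishing of the relevant Whitehead torsion, proved via Waldhausen's theorem that $Wh(\pi_1)$ vanishes for Haken manifolds when $M$ is irreducible (Hakenness coming from $|H_1(M)|=\infty$), and, when $M$ is reducible, via a connected-sum decomposition $M=M_0\# M_2$ with $M_2$ a rational homology sphere, so that $\rho_\Gamma$ factors through the Haken piece and the image of $\tau(C_\ast(M_f,X(P)))$ in $K_1(\mathcal{K}_\Gamma)$ is trivial. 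Without this, the multiplicativity argument only gives $\tau_\Gamma^+(M)=\tau(C_\ast(M_f,X(P);\mathcal{K}_\Gamma))\cdot{}^{\rho_\Gamma}\bigl(\begin{smallmatrix}A\\ B\end{smallmatrix}\bigr)$ with an uncontrolled first factor. A secondary point: you deduce invertibility of ${}^{\rho_\Gamma}\bigl(\begin{smallmatrix}A\\ B\end{smallmatrix}\bigr)$ from Lemma \ref{lem:relative} applied to $(K,i_+(S))$, but that identification presupposes the very homotopy equivalence in question; the paper avoids this circularity by proving invertibility directly, applying the augmentation and Strebel's result as in the proof of Lemma \ref{lem:relative}.
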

\begin{proof}
(1) For an admissible presentation of 
$\pi_1 (M)=\pi_1 (M')$ obtained 
from (\ref{eq:pre_adm}), 
the torsion $\tau_{\Gamma}^+ (M)$ is given 
by the matrix ${}^{\rho_\Gamma} J$. 
Hence our claim holds in this case. 

Given any admissible presentation $P$ of $\pi_1 (M)$ 
as in (\ref{admissible}), we construct a 
2-complex $X(P)$ having one 0-cell as a basepoint, 
$(4g+2n-2+l)$ 1-cells 
indexed by the generators and 
$(2g+n-1+l)$ 2-cells indexed by the relations 
and attached according to the words. 
Then we can use a theorem of Harlander-Jensen \cite[Theorem 3]{hj} 
with the fact that the deficiency of $\pi_1 (M)$ is $2g+n-1$ 
(see Epstein \cite[Lemmas 1.2, 2,2]{ep}) to show that 
$X(P)$ and $M'$ are homotopy equivalent. In fact, 
there exists a basepoint preserving 
cellular map $f:X(P) \to M'$ 
which is a homotopy equivalence and 
maps the union $S_0$ of the 1-cells of $P_0$ 
corresponding to $i_+(\gamma_1),\ldots,
i_+(\gamma_{2g+n-1})$ homeomorphically onto $S'$. 
Let $M_f$ be the mapping cylinder of $f$. 
We have 
\begin{align*}
\tau_{\Gamma}^+ (M) &= \tau(C_\ast (M,i_+(\Sigma_{g,n});\mathcal{K}_\Gamma))
=\tau(C_\ast (M,i_+(S);\mathcal{K}_\Gamma))\\
&=\tau(C_\ast (M',S';\mathcal{K}_\Gamma))
=\tau(C_\ast (M_f,S';\mathcal{K}_\Gamma))
=\tau(C_\ast (M_f,S_0 \times [0,1];\mathcal{K}_\Gamma))\\
&=\tau(C_\ast (M_f,S_0;\mathcal{K}_\Gamma))
=\tau(C_\ast (M_f,X(P);\mathcal{K}_\Gamma)) 
\tau(C_\ast (X(P),S_0;\mathcal{K}_\Gamma))
\end{align*}
\noindent
where we repeatedly used the multiplicativity of torsions. 
(For example, we have
\[\tau(C_\ast (M,i_+(S);\mathcal{K}_\Gamma))
=\tau(C_\ast (M,i_+(\Sigma_{g,n});\mathcal{K}_\Gamma))
\tau(C_\ast (i_+(\Sigma_{g,n}),i_+(S);\mathcal{K}_\Gamma))\]
with $\tau(C_\ast (i_+(\Sigma_{g,n}),i_+(S);\mathcal{K}_\Gamma))=1$ 
since $i_+(\Sigma_{g,n})$ is simple homotopy 
equivalent to $i_+(S)$.) 

We now compute $\tau(C_\ast(X(P),S_0;\mathcal{K}_\Gamma))$. 
As in the case of the complex $(M',S')$, 
the relative complex $(X(P), S_0)$ 
consists of only the same number of 1-cells and 2-cells. 
Thus $\tau(C_\ast (X(P),S_0;\mathcal{K}_\Gamma))$ is given by 
$\sideset{^{\rho_\Gamma}\!\!}{}
{\mathop{\begin{pmatrix} A \\ B \end{pmatrix}}\nolimits}$, 
which is a square matrix over $\mathbb{Z} \Gamma$. 
By an argument similar to the matrix $J$ in the proof of 
Lemma \ref{lem:relative}, we can check 
that this matrix is invertible over $\mathcal{K}_\Gamma$. 

If $M$ is an irreducible 3-manifold, 
it is a Haken manifold since $|H_1 (M)|=\infty$. 
Waldhausen's theorem 
\cite[Theorems 19.4, 19.5]{waldhausen1} shows that 
the Whitehead group $Wh(\pi)=K_1 (\mathbb{Z} \pi_1 (M))/\pm \pi_1 (M)$ 
of $\pi_1 (M)$ vanishes. Hence $X(P)$, $M'$ and $M_f$ are 
simple homotopy equivalent and 
we have $\tau(C_\ast (M_f,X(P);\mathcal{K}_\Gamma)) =1$. 
The second claim of (1) follows in this case. 

If $M$ is not irreducible, we can check that 
$M$ is a connected sum of 
a Haken manifold $M_0$ containing $\partial M$ 
and a (possibly reducible) rational homology 3-sphere $M_2$. 
Since any homomorphism from 
$\pi_1 (M_2)$ to a PTFA group $\Gamma$ is trivial, the homomorphism 
$\rho_\Gamma$ factors through $\pi_1 (M_1)$, whose 
Whitehead group vanishes as mentioned above. 
Now $\tau(C_\ast (M_f,X(P);\mathcal{K}_\Gamma))$ is the image of 
the Whitehead torsion 
$\tau(C_\ast (M_f,X(P);\mathbb{Z} \pi_1 (M))) \in Wh(\pi_1 (M))$ 
by $\rho_\Gamma$. It must be trivial 
since it passes through $Wh (\pi_1 (M_1))=0$. This completes the proof. 

(2) The proof is almost identical to 
that in \cite[Proposition 3.9]{sakasai}, 
and here we omit it. 
\end{proof}

The $\Gamma$-torsion and the Magnus matrix 
can be used as fibering obstructions of 
a homologically fibered link as follows. 
If a link is fibered, 
the complementary sutured manifold for each minimal genus Seifert 
surface is a product sutured manifold, 
whose $\Gamma$-torsion is trivial for any $\mathcal{K}_\Gamma$. 
Together with Example \ref{fox}, we have: 

\begin{theorem}\label{thm:fiberness}
\begin{itemize}
\item[$(1)$] 
Suppose a homologically fibered link has 
a minimal genus Seifert surface which 
gives a homology cylinder having non-trivial $\Gamma$-torsion for 
some PTFA group $\Gamma$, then it is not fibered. 
\item[$(2)$] 
Let $M$ be a homology cylinder obtained from a minimal 
genus Seifert surface 
of a fibered link. 
Then all the entries of the Magnus matrix $r_{\Gamma}(M)$ are 
in $\mathbb{Z} \Gamma$. 
\end{itemize}
\end{theorem}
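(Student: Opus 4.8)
The plan is to reduce both claims to the behavior of the two invariants on a product sutured manifold, since a fibered link gives, for any minimal genus Seifert surface, a complementary sutured manifold isomorphic to a product $(\Sigma_{g,n}\times[0,1],\ \mathrm{id}\times 1,\ \varphi\times 0)$ for the monodromy $\varphi$ (this is the standard fact that the complement of a fibered link fibers over $S^1$ with fiber the Seifert surface, so cutting along the fiber yields $\Sigma_{g,n}\times[0,1]$ with markings given by $\mathrm{id}$ and $\varphi$). By Proposition \ref{thm:homologicalfibre} such a surface does give a homology cylinder, so both statements are about the homology cylinder $(\Sigma_{g,n}\times[0,1],\mathrm{id}\times 1,\varphi\times 0)$.

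For (1): first I would observe that for the product cylinder $\Sigma_{g,n}\times[0,1]$, the pair $(\Sigma_{g,n}\times[0,1],\ i_+(\Sigma_{g,n})=\Sigma_{g,n}\times\{1\})$ deformation retracts onto $\Sigma_{g,n}\times\{1\}$ relative to nothing — more precisely, the relative chain complex $C_\ast(M,i_+(\Sigma_{g,n});\mathcal K_\Gamma)$ is chain-contractible by an elementary straight-line homotopy in the $[0,1]$ direction, and in fact $(M,i_+(\Sigma_{g,n}))$ is simple homotopy equivalent to a point relative to a point. Hence its torsion $\tau_\Gamma^+(M)=\tau(C_\ast(M,i_+(\Sigma_{g,n});\mathcal K_\Gamma))$ is trivial in $K_1(\mathcal K_\Gamma)/\pm\rho_\Gamma(\pi_1(M))$ for every PTFA group $\Gamma$. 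Since $\tau_\Gamma^+$ is an isomorphism invariant of homology cylinders (torsions are invariant under simple homotopy equivalence, as recalled before Definition of the $\Gamma$-torsion), any homology cylinder isomorphic to a product has trivial $\Gamma$-torsion. Contrapositively, if some minimal genus Seifert surface of a homologically fibered link produces a homology cylinder with non-trivial $\Gamma$-torsion for some PTFA $\Gamma$, that cylinder is not isomorphic to a product, so the link is not fibered.

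For (2): the link being fibered means the homology cylinder $M$ is isomorphic to $(\Sigma_{g,n}\times[0,1],\mathrm{id}\times 1,\varphi\times 0)$ for the monodromy diffeomorphism $\varphi\in\mathcal M_{g,n}$ (which fixes $\partial\Sigma_{g,n}$ pointwise). By Example \ref{fox}, the Magnus matrix of this cylinder is
\[r_\Gamma(M)=\overline{\sideset{^{\rho_\Gamma}\!}{}{\mathop{\left(\frac{\partial\varphi(\gamma_j)}{\partial\gamma_i}\right)}\nolimits}}_{1\le i,j\le 2g+n-1}.\]
Each Fox derivative $\partial\varphi(\gamma_j)/\partial\gamma_i$ lies in $\mathbb Z\pi_1(\Sigma_{g,n})=\mathbb Z F_{2g+n-1}$, so applying the involution and then $\rho_\Gamma$ lands in $\mathbb Z\Gamma$; hence every entry of $r_\Gamma(M)$ lies in $\mathbb Z\Gamma$, as claimed. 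The main point to be careful about is invariance: a change of the marking diffeomorphism $\vartheta$ conjugates $M$ by an element of $\mathcal M_{g,n}$ (Remark after Proposition \ref{thm:homologicalfibre}), and one must check that the property "all entries lie in $\mathbb Z\Gamma$" is preserved — this follows because the Magnus matrix of an element of $\mathcal M_{g,n}$, its inverse, and products thereof all have entries in $\mathbb Z\Gamma$ by the same Fox-derivative formula, together with the cocycle/multiplicativity behavior of $r_\Gamma$ under composition. So the only genuine content is Example \ref{fox} plus the integrality of Fox derivatives; there is no serious obstacle beyond bookkeeping the marking dependence.
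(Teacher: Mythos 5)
Your argument is correct and is essentially the paper's own: the paper proves the theorem by noting that a fibered link's minimal genus Seifert surfaces have product complementary sutured manifolds, whose $\Gamma$-torsion is trivial for every $\mathcal{K}_\Gamma$, and by invoking Example \ref{fox} for the integrality of the entries of $r_\Gamma$. Your extra bookkeeping about the choice of $\vartheta$ is harmless but unnecessary, since any change of marking conjugates the cylinder by mapping-class cylinders and the image of $\mathcal{M}_{g,n}\to\mathcal{C}_{g,n}$ is closed under composition, so Example \ref{fox} applies directly.
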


\begin{example}
Let $K=P(-3,5,9)$, which is a homologically fibered knot as seen in 
Example \ref{ex:pretzel1}. We take a Seifert surface of $K$ and its spine 
as in Figure \ref{fig:pretzel2}, where the darker color means the $+$-side.

\begin{figure}[htbp]
\begin{minipage}{.45\textwidth}
\begin{center}
\includegraphics[width=.7\textwidth]{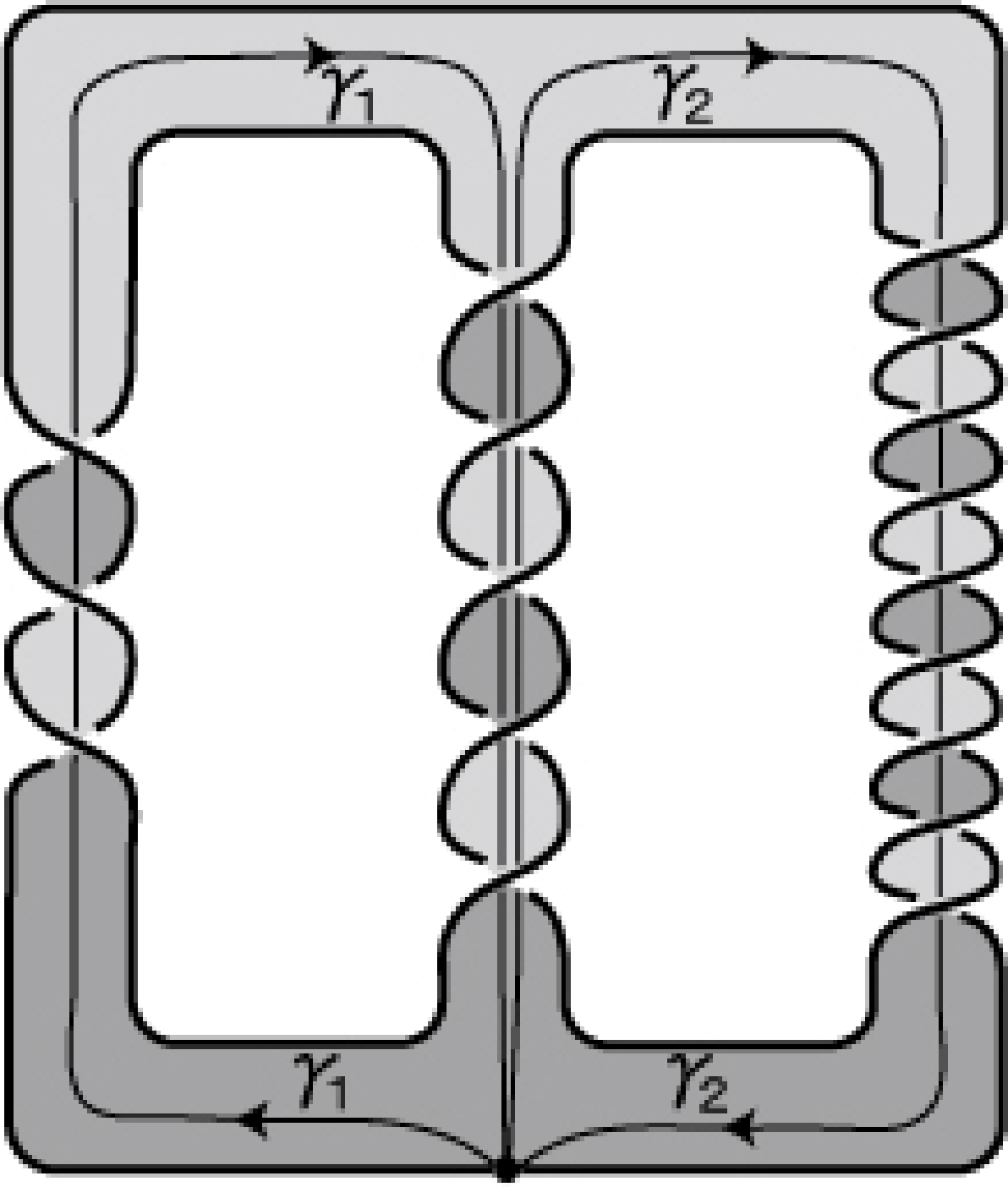}
\end{center}
\caption{A Seifert surface of $P(-3,5,9)$ and its spine}
\label{fig:pretzel2}
\end{minipage}
\begin{minipage}{.45\textwidth}
\begin{center}
\includegraphics[width=.7\textwidth]{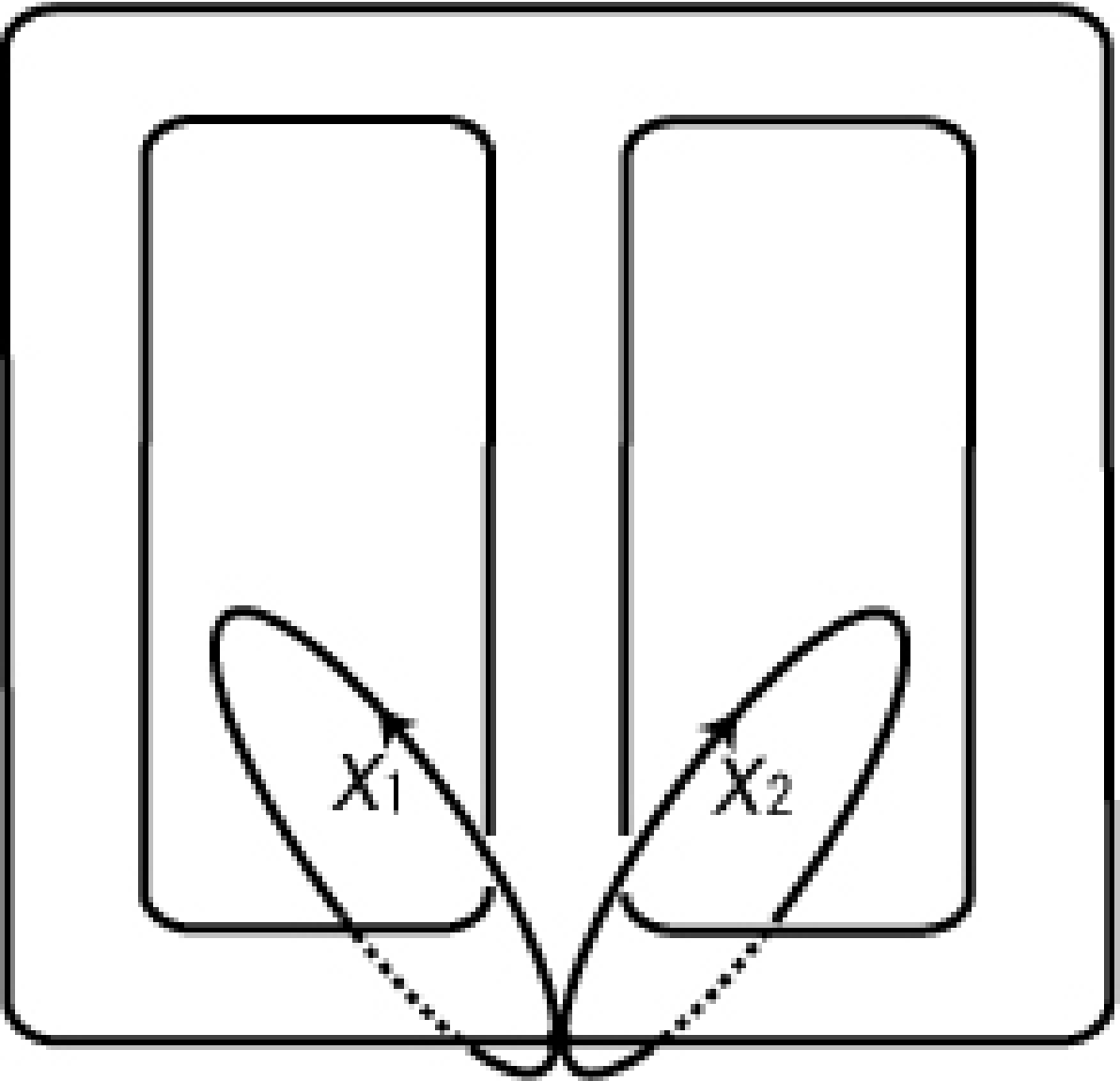}
\end{center}
\caption{A basis of $\pi_1 (M)$}
\label{fig:pretzel3}
\end{minipage}
\end{figure}

\noindent
The loops $x_1, x_2$ in Figure \ref{fig:pretzel3} form
a basis of $\pi_1 (M)$ of the complementary sutured manifold $M$.
They are oriented according to Figure \ref{fig:trefoil}.
A direct computation shows that 
\[i_-(\gamma_1)=x_1^{-1} (x_2 x_1)^2, \ \ 
i_-(\gamma_2)=x_2^{4}(x_2 x_1)^3, \ \ 
i_+(\gamma_1)=x_1^{-2} (x_1 x_2)^3, \ \ 
i_+(\gamma_2)=x_2^{5}(x_1 x_2)^2\]
and we obtain an admissible presentation 
\[{\small \left\langle 
\begin{array}{c|l}
i_-(\gamma_1),i_-(\gamma_2),
x_1,x_2,i_+(\gamma_1),i_+(\gamma_2) & 
\begin{array}{l}
i_-(\gamma_1)(x_1^{-1}x_2^{-1})^2x_1, \ 
i_-(\gamma_2)(x_1^{-1}x_2^{-1})^3x_2^{-4},\\
i_+(\gamma_1)(x_2^{-1}x_1^{-1})^3x_1^2, \ 
i_+(\gamma_2)(x_2^{-1}x_1^{-1})^2x_2^{-5} 
\end{array}
\end{array}\right\rangle}\]
of $\pi_1 (M)$. 
$H_1 (M)$ is the free abelian group generated by $t_1:=[x_1]$ and 
$t_2:=[x_2]$ and the natural homomorphism 
$\rho_\Gamma: \pi_1 (M) \to H_1 (M)=:\Gamma$ maps
\[i_-(\gamma_1)\mapsto t_1 t_2^2, \quad
i_-(\gamma_2)\mapsto t_1^3 t_2^{7}, \quad
i_+(\gamma_1)\mapsto t_1 t_2^{3}, \quad
i_+(\gamma_2)\mapsto t_1^2 t_2^{7}.\]

Now $\mathcal{K}_\Gamma$ is 
isomorphic to the field of rational functions 
with variables $t_1$ and $t_2$. 
We have
\[{}^{\rho_\Gamma}A=\begin{pmatrix}
I_2 & 0_{(2,2)}
\end{pmatrix}, \qquad 
{}^{\rho_\Gamma}B=\begin{pmatrix}
G_1 & G_2
\end{pmatrix}, \qquad 
{}^{\rho_\Gamma}C=\begin{pmatrix}
0_{(2,2)} & I_2 
\end{pmatrix},\]
where
\begin{align*}
G_1 &= \begin{pmatrix}
t_1 - t_1 t_2^{-1} -t_2^{-2} & 
-t_1^{-2}t_2^{-7}-t_1^{-1}t_2^{-6}-t_2^{-5} \\
-t_1-t_2^{-1} & 
-t_1^{-2}t_2^{-6}-t_1^{-1}t_2^{-5}-t_2^{-4}
-t_2^{-3}-t_2^{-2}-t_2^{-1}-1
\end{pmatrix},\\
G_2 &= \begin{pmatrix}
t_1-t_1 t_2^{-1}-t_2^{-2} & 
-t_1^{-1} t_2^{-6}-t_2^{-5} \\
-t_1^{-1}t_2^{-2} - t_1-t_2^{-1} & 
-t_1^{-2}t_2^{-6}-t_1^{-1}t_2^{-5}-t_2^{-4}
-t_2^{-3}-t_2^{-2}-t_2^{-1}-1
\end{pmatrix}.
\end{align*}
\noindent
Thus $\tau_{\Gamma}^+ (M) = 
\sideset{^{\rho_\Gamma}\!\!}{}
{\mathop{\begin{pmatrix} A \\ B \end{pmatrix}}\nolimits}
= \begin{pmatrix}
I_2 & 0_{(2,2)}\\
G_1 & G_2
\end{pmatrix}=G_2 
\in K_1 (\mathcal{K}_\Gamma)/\pm\rho_\Gamma (\pi_1 (M))$, 
which is non-trivial because 
\[\det (\tau_{\Gamma}^+ (M))= \det (G_2)
=
-t_1^{-1}t_2^{-6}-t_1+t_2^{-4}+t_2^{-3}+t_2^{-2}\]
is not a monomial. This shows that $P(-3,5,9)$ is {\it not} fibered
by Theorem \ref{thm:fiberness} (1). 
The Magnus matrix $r_\Gamma (M)$ is given by 

\[{\Large \left(\begin{array}{cc}
\frac{-1-t_1 t_2+t_1 t_2^2-t_1^2 t_2^4-t_1^2 t_2^5-t_1^2 t_2^6+t_1^3t_2^8}
{t_1 t_2^2(1-t_1 t_2^2 -t_1 t_2^3-t_1 t_2^4+t_1^2t_2^6)} & 
\frac{-1-t_1 t_2-t_1^2 t_2^2-t_1^2 t_2^3-t_1^2 t_2^4-t_1^2 t_2^5-t_1^2 t_2^6}
{t_1^3t_2^7(1-t_1 t_2^2-t_1 t_2^3-t_1 t_2^4+t_1^2 t_2^6)}\\ &\\
\frac{t_2^2( 1+t_1 t_2-t_1t_2^2)}
{1-t_1 t_2^2-t_1 t_2^3-t_1 t_2^4+t_1^2 t_2^6} & 
\frac{1+t_1 t_2+t_1^2 t_2^2+t_1^2 t_2^3-t_1^3 
t_2^5-t_1^3 t_2^6-t_1^3 t_2^7+t_1^4t_2^9}
{t_1^2t_2^3(1-t_1 t_2^2-t_1 t_2^3-t_1 t_2^4+t_1^2t_2^6)}
\end{array}\right),}\]

\noindent
which also indicates the non-fiberedness of $P(-3,5,9)$ since 
all the entries of $r_{\Gamma}(M)$ should be Laurent polynomials 
by Theorem \ref{thm:fiberness}(2) if it were fibered . 
\end{example}


\section{
Twisted homology and torsions of rationally homologically fibered 
link exteriors}\label{sec:factorization}

In this section, we see that the invariants defined 
in Section \ref{sec:magnus} make up 
torsions of exteriors of rationally homologically fibered links 
under special choices of PTFA groups $\Gamma$. Before that, we 
observe generalities of torsions of link exteriors. 

Let $L$ be an $n$-component link. 
Assume that 
the $($one variable\/$)$ 
Alexander polynomial $\Delta_L (t)$ of $L$ is not equal to zero. 
Then the Wirtinger presentation gives a 
presentation of $\pi_1 (E(L))$ with deficiency $0$. 
It is known that we can drop any one of the relations. 
Let $Q_0$ be such a presentation of the form
\[\langle y_1, \ldots, y_{m+1} \mid s_1, \ldots, s_m \rangle.\]
It is also known that the CW-complex $X(Q_0)$ constructed as in 
the proof of Proposition \ref{prop:Formula} has the same simple 
homotopy type as the link exterior $E(L)$. 

Let $\rho_\Gamma:\pi_1 (E(L)) \to \Gamma$ be an epimorphism 
whose target $\Gamma \neq \{1\}$ is PTFA and 
let $\rho: \pi_1 (E(L)) \to \langle t \rangle \cong \mathbb{Z}$ 
be the homomorphism 
sending each oriented meridian to $t$. The following proposition gives 
a sufficient condition for the torsion 
$\tau_\Gamma (E(L))=\tau (C_\ast (E(L);\mathcal{K}_\Gamma))$ 
of $E(L)$ to be defined.
\begin{proposition}
If the 
$($one variable\/$)$ 
Alexander polynomial $\Delta_L (t)$ of $L$ is not equal to zero and 
$\rho$ factors through $\rho_\Gamma$, then 
$H_\ast (E(L);\mathcal{K}_\Gamma)=0$.
\end{proposition}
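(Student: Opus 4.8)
The plan is to reduce the vanishing of $H_\ast(E(L);\mathcal{K}_\Gamma)$ to the invertibility of a single square Jacobian matrix over $\mathcal{K}_\Gamma$, exactly as in the proof of Lemma \ref{lem:relative}. First I would use the CW-complex $X(Q_0)$ built from the truncated Wirtinger presentation $\langle y_1,\ldots,y_{m+1}\mid s_1,\ldots,s_m\rangle$, which has the simple homotopy type of $E(L)$; so $H_\ast(E(L);\mathcal{K}_\Gamma)=H_\ast(X(Q_0);\mathcal{K}_\Gamma)$. Since $X(Q_0)$ has one $0$-cell, $(m+1)$ $1$-cells and $m$ $2$-cells, and $H_0(E(L);\mathcal{K}_\Gamma)=0$ because $\Gamma\neq\{1\}$ (a standard fact: $\mathcal{K}_\Gamma\otimes_{\mathbb{Z}\Gamma}\mathbb{Z}=0$ when $\Gamma$ is infinite), the twisted chain complex takes the form
\[0\longrightarrow (\mathcal{K}_\Gamma)^m \xrightarrow{\ {}^{\rho_\Gamma}\!J\ } (\mathcal{K}_\Gamma)^{m+1}\xrightarrow{\ \partial_1\ }(\mathcal{K}_\Gamma)^{m+1}/(\text{image of }\partial_1^{\mathrm{split}})\cong(\mathcal{K}_\Gamma)^{m}\longrightarrow 0,\]
where $J=\overline{\left(\partial s_j/\partial y_i\right)}$ is the Fox Jacobian. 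More cleanly: after deleting one column of the $(m+1)\times m$ boundary matrix corresponding to a generator $y_{i_0}$ whose image in $\Gamma$ is nontrivial (such a generator exists since $\rho_\Gamma$ is onto a nontrivial group and all meridians are conjugate, hence all $y_i$ have the same nontrivial image), acyclicity of the whole complex is equivalent to the resulting $m\times m$ minor being invertible over $\mathcal{K}_\Gamma$.

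Next, to prove that minor is invertible I would run the Strebel argument used in Lemma \ref{lem:relative}: reduce modulo the augmentation $\mathfrak{a}:\mathbb{Z}\Gamma\to\mathbb{Z}$ composed with $\rho_\Gamma$. But here, rather than the trivial augmentation, the key point is to compare with the map $\rho:\pi_1(E(L))\to\langle t\rangle$, which factors through $\rho_\Gamma$ by hypothesis. So there is a ring homomorphism $\mathbb{Z}\Gamma\to\mathbb{Z}[t^{\pm1}]$ through which the representation descends; applying it to $J$ yields a presentation matrix of the classical Alexander module $H_1(E(L);\mathbb{Z}[t^{\pm1}])$, and the relevant $m\times m$ minor maps to a matrix whose determinant is $\pm t^k\Delta_L(t)$ up to units. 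Since $\Delta_L(t)\neq 0$, this determinant is nonzero in $\mathbb{Z}[t^{\pm1}]$, hence nonzero in the quotient field $\mathbb{Q}(t)$, so the minor is invertible over $\mathbb{Q}(t)$.

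Then I would invoke Strebel's result (as cited in the excerpt: $\Gamma\in D(\mathbb{Z})$) to conclude that invertibility of ${}^{\mathfrak{a}'\circ\rho_\Gamma}(\text{minor})$ over $\mathbb{Q}(t)$ — equivalently, its nonvanishing determinant there — lifts to invertibility of ${}^{\rho_\Gamma}(\text{minor})$ over $\mathcal{K}_\Gamma$. Strictly, Strebel's criterion is stated for reduction to $\mathbb{Z}$ (the trivial representation); to use the factorization through $\langle t\rangle$ I would instead argue in two stages, first that $\langle t\rangle$ itself is PTFA with $\mathcal{K}_{\langle t\rangle}=\mathbb{Q}(t)$ and the classical nonvanishing of $\Delta_L$ gives acyclicity of $C_\ast(E(L);\mathbb{Q}(t))$, then that the inclusion-induced map $\mathcal{K}_{\langle t\rangle}\to\mathcal{K}_\Gamma$ (available because $\rho$ factors through $\rho_\Gamma$, so $\mathbb{Z}\langle t\rangle\hookrightarrow\mathbb{Z}\Gamma$ extends to an embedding of Ore localizations) is faithfully flat, whence $C_\ast(E(L);\mathcal{K}_\Gamma)=C_\ast(E(L);\mathbb{Q}(t))\otimes_{\mathbb{Q}(t)}\mathcal{K}_\Gamma$ is acyclic too. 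I expect the main obstacle to be this last point: checking that the Ore localization $\mathcal{K}_\Gamma$ is flat (indeed free) as a left $\mathcal{K}_{\langle t\rangle}$-module, which requires knowing $\mathbb{Q}\Gamma$ is a free module over $\mathbb{Q}\langle t\rangle$ — true since $\mathbb{Z}\langle t\rangle\to\mathbb{Z}\Gamma$ is induced by an injection of the free abelian group $\langle t\rangle$ into $\Gamma$ as a subgroup, so $\mathbb{Z}\Gamma$ is free over $\mathbb{Z}\langle t\rangle$ with basis a set of coset representatives — combined with the Ore condition ensuring localization commutes with this base change. Everything else (the Fox calculus identification of $J$ with an Alexander presentation matrix, the simple-homotopy statement for $X(Q_0)$) is routine and already referenced in the excerpt.
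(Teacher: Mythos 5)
Your reduction of the statement to the invertibility over $\mathcal{K}_\Gamma$ of a deleted-row Wirtinger Jacobian is fine (one small slip: for a link the generators $y_i$ are not all conjugate and need not have the same image in $\Gamma$; all you need, and all that is true, is that \emph{some} $y_{i_0}$ has nontrivial image, since the $y_i$ generate and $\rho_\Gamma$ is onto $\Gamma\neq\{1\}$). The genuine gap is in your final bridging step, the one you yourself flagged as the main obstacle. The hypothesis that $\rho$ factors through $\rho_\Gamma$ gives a homomorphism $\Gamma\to\langle t\rangle$ and hence a ring map $\mathbb{Z}\Gamma\to\mathbb{Z}[t^{\pm 1}]$; it does \emph{not} give an inclusion $\mathbb{Z}\langle t\rangle\hookrightarrow\mathbb{Z}\Gamma$, so the ``inclusion-induced embedding $\mathcal{K}_{\langle t\rangle}\to\mathcal{K}_\Gamma$'' you want to use does not exist in this situation. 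Moreover, even if $\langle t\rangle$ happened to be a subgroup of $\Gamma$, the identity $C_\ast(E(L);\mathcal{K}_\Gamma)=C_\ast(E(L);\mathbb{Q}(t))\otimes_{\mathbb{Q}(t)}\mathcal{K}_\Gamma$ is false: the left-hand side is $C_\ast(\widetilde{E(L)})\otimes_{\mathbb{Z}\pi_1(E(L))}\mathcal{K}_\Gamma$ with the $\pi_1$-action given by $\rho_\Gamma$, and this action does not factor through $\langle t\rangle$ unless $\rho_\Gamma$ itself factors through $\rho$ (the opposite of the hypothesis). The $\Gamma$-cover is finer than the infinite cyclic cover, so acyclicity with $\mathcal{K}_\Gamma$-coefficients cannot be deduced from classical acyclicity by flat or free base change; base change only goes the other way, from $\mathbb{Z}\Gamma$ down to $\mathbb{Z}[t^{\pm 1}]$.

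What is needed, and what the paper actually uses, is a Strebel-type lifting result in the correct direction, relative to the quotient $\Gamma\to\langle t\rangle$ rather than the augmentation to $\mathbb{Z}$. From $\Delta_L(t)\neq 0$ one gets $H_\ast(E(L);\mathcal{K}_{\langle t\rangle})=0$, hence the map ${}^{\rho}J\cdot\colon(\mathbb{Z}\langle t\rangle)^m\to(\mathbb{Z}\langle t\rangle)^{m+1}$ in (\ref{eq:chain}) is injective; since PTFA groups are locally indicable and $\rho$ factors through $\rho_\Gamma$, Friedl's result \cite[Proposition 6.4]{fri} upgrades this to injectivity of ${}^{\rho_\Gamma}J\cdot$ over $\mathbb{Z}\Gamma$, and flatness of the Ore localization then gives injectivity over $\mathcal{K}_\Gamma$. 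Combined with surjectivity of the last map of (\ref{eq:chain}) over $\mathcal{K}_\Gamma$ (because $\rho_\Gamma(y_{i_0})\neq 1$ for some $i_0$) and a dimension count over the skew field $\mathcal{K}_\Gamma$, this yields $H_\ast(E(L);\mathcal{K}_\Gamma)=0$. In your formulation, the passage from nonvanishing of the determinant of the image minor in $\mathbb{Q}(t)$ to invertibility of the minor over $\mathcal{K}_\Gamma$ is exactly the content of this locally-indicable machinery, and it cannot be replaced by the freeness and faithful-flatness considerations you propose.
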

\begin{proof}
The chain complex $C_\ast (X(Q_0); \mathbb{Z} \Gamma)$ is of the form
\begin{equation}\label{eq:chain}
0 \longrightarrow (\mathbb{Z} \Gamma)^m \xrightarrow{{}^{\rho_\Gamma}J \cdot} 
(\mathbb{Z} \Gamma)^{m+1} 
\xrightarrow{{}^{\rho_\Gamma} 
\begin{pmatrix}1-y_1^{-1}, 
\ldots, 1-y_{m+1}^{-1}\end{pmatrix}\cdot} 
\mathbb{Z} \Gamma 
\longrightarrow 0,
\end{equation}
\noindent
where $J=\overline{
\left(\displaystyle\frac{\partial s_j}{\partial y_i}
\right)}_{\begin{subarray}{c}
{}1 \le i \le m+1\\
1 \le j \le m
\end{subarray}}$. 
Now the assumption $\Delta_L (t) \neq 0$ 
implies that $H_\ast (E(L);\mathcal{K}_{\langle t \rangle})=0$. 
In particular, ${}^\rho J \cdot : (\mathbb{Z} \langle t \rangle)^m 
\to (\mathbb{Z} \langle t \rangle)^{m+1}$ is injective. 
Since PTFA groups are locally indicable, 
it follows from Friedl \cite[Proposition 6.4]{fri} that 
the second map of (\ref{eq:chain}) is injective. 
It is still injective when we apply $\otimes_\Gamma \mathcal{K}_\Gamma$. 
The third map of (\ref{eq:chain}) is clearly surjective after applying 
$\otimes_\Gamma \mathcal{K}_\Gamma$. Hence 
$H_\ast (E(L);\mathcal{K}_\Gamma) = H_\ast (X(Q_0);\mathcal{K}_\Gamma)
=0$ holds. 
\end{proof}
\begin{remark}
In the above argument, we can replace $\rho$ by any other homomorphism 
$\rho':\pi_1 (E(L)) \to \mathbb{Z}$ 
satisfying $H_\ast (E(L);\mathcal{K}_\mathbb{Z})=0$, 
where $\mathcal{K}_\mathbb{Z}$ is twisted by $\rho'$. 
In fact, since the multivariable Alexander polynomial of $L$ is non-trivial 
(see \cite[Proposition 7.3.10]{kook}, for example), we can 
use McMullen's argument \cite[Theorem 4.1]{mcm} to show that 
$H_\ast (E(L);\mathcal{K}_\mathbb{Z})=0$ for generic $\rho' \neq 0$. 
We also remark that by the definition of PTFA groups, 
there exists at least one homomorphism $\Gamma \to \mathbb{Z}$, 
whose composition with $\rho_\Gamma$ is non-trivial.
\end{remark}

Hereafter we assume that $H_\ast (E(L);\mathcal{K}_\Gamma)=0$. 
By using the cell structure of $X(Q_0)$, the torsion 
$\tau_\Gamma (E(L))$ is given by 
\begin{align*}
\tau_\Gamma (E(L))&=\tau_\Gamma (X(Q_0))= 
({}^{\rho_\Gamma} J)_i \cdot (1-\rho_\Gamma (y_i^{-1}))^{-1} \\
& \in 
K_1(\mathcal{K}_\Gamma)/\pm\rho_{\Gamma}(\pi_1 (E(L)))
=K_1(\mathcal{K}_\Gamma)/\pm \Gamma,
\end{align*}
\noindent
where $1 \le i \le m+1$ is chosen so that $\rho_\Gamma (y_i) \neq 1$ and 
$({}^{\rho_\Gamma} J)_i$ is obtained from ${}^{\rho_\Gamma} J$ by deleting 
its $i$-th row (see Friedl \cite[Lemma 6.6]{fri} for example). 
The torsion $\tau_\Gamma (E(L))$ is independent of 
such a choice of $i$. 

For later use, we show that we can compute $\tau_\Gamma (E(L))$ from 
any presentation of $\pi_1 (E(L))$ with deficiency $1$. 
Suppose $Q$ is such a presentation of the form 
\[\langle x_1, \ldots, x_{k+1} \mid r_1, \ldots, r_k \rangle.\]
Let $X(Q)$ be the corresponding 2-complex. 

\begin{lemma}\label{lem:simple}
The equality 
$\tau_\Gamma (E(L))=\tau_\Gamma (X(Q)) \in 
K_1(\mathcal{K}_\Gamma)/\pm \Gamma$ holds. 
\end{lemma}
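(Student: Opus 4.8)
The plan is to reduce the statement to an invariance property of Reidemeister torsion under simple homotopy equivalence, mirroring the argument already used in the proof of Proposition~\ref{prop:Formula}(1). First I would observe that $X(Q)$ and $X(Q_0)$ are both finite 2-complexes with fundamental group $\pi_1(E(L))$, and that $E(L)$ itself is simple homotopy equivalent to $X(Q_0)$ as recalled just before the lemma. So it suffices to prove that $X(Q)$ is simple homotopy equivalent to $E(L)$ (equivalently, to $X(Q_0)$), since torsion is a simple homotopy invariant and $H_\ast(E(L);\mathcal{K}_\Gamma)=0$ guarantees all the relevant torsions are defined.

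Next I would invoke the same tools used for Proposition~\ref{prop:Formula}: by Epstein~\cite[Lemmas 1.2, 2.2]{ep} (or directly from the half-lives-half-dies computation for $H_1(E(L))\cong\mathbb{Z}^n$), the deficiency of $\pi_1(E(L))$ equals $1$, so $Q$ is a presentation realizing the deficiency. Then the theorem of Harlander-Jensen~\cite[Theorem 3]{hj} applies: a finite 2-complex whose fundamental group has a presentation of deficiency equal to the deficiency of the group, built on that presentation, is homotopy equivalent to any other such complex; in particular $X(Q)$ is homotopy equivalent to $X(Q_0)$, hence to $E(L)$. This gives a homotopy equivalence $f: X(Q) \to E(L)$; I would take its mapping cylinder $M_f$ and write $\tau_\Gamma(X(Q))$ in terms of $\tau_\Gamma(E(L))$ and $\tau(C_\ast(M_f, X(Q);\mathcal{K}_\Gamma))$, which is the image under $\rho_\Gamma$ of the Whitehead torsion $\tau(C_\ast(M_f,X(Q);\mathbb{Z}\pi_1(E(L)))) \in Wh(\pi_1(E(L)))$.

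The main obstacle — and the place where some care is needed — is exactly the vanishing of this Whitehead torsion, i.e. showing $f$ is in fact a \emph{simple} homotopy equivalence. I would handle this precisely as in Proposition~\ref{prop:Formula}(1): if $E(L)$ is irreducible it is Haken (as $|H_1(E(L))|=\infty$), so Waldhausen~\cite[Theorems 19.4, 19.5]{waldhausen1} gives $Wh(\pi_1(E(L)))=0$; if $E(L)$ is reducible, one writes it as a connected sum of a Haken piece $M_1$ containing $\partial E(L)$ and a (possibly reducible) rational homology 3-sphere, notes that $\rho_\Gamma$ factors through $\pi_1(M_1)$ since any map from a rational homology sphere's fundamental group to a PTFA group is trivial, and concludes that $\tau(C_\ast(M_f,X(Q);\mathcal{K}_\Gamma))$ is the image of an element of $Wh(\pi_1(M_1))=0$. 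Therefore $\tau(C_\ast(M_f,X(Q);\mathcal{K}_\Gamma))=1$, and the multiplicativity of torsion yields $\tau_\Gamma(X(Q))=\tau_\Gamma(E(L))$ in $K_1(\mathcal{K}_\Gamma)/\pm\Gamma$, as claimed. One should double-check that the knot/link exterior is never a closed manifold so the Haken/connected-sum dichotomy genuinely covers all cases, but this is routine.
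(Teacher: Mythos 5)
Your proposal follows essentially the same route as the paper's proof: one shows the deficiency of $\pi_1(E(L))$ is $1$, applies Harlander--Jensen's theorem to get a homotopy equivalence $X(Q)\simeq X(Q_0)\simeq E(L)$, upgrades it to a simple homotopy equivalence via Waldhausen's vanishing of the Whitehead group, and concludes by invariance of torsion. The only differences are minor: the paper deduces that the deficiency cannot exceed $1$ directly from the standing assumption $H_\ast(E(L);\mathcal{K}_\Gamma)=0$ (rather than from Epstein's bounds, whose upper bound implicitly needs non-splitness --- which that assumption guarantees anyway), and it simply cites Waldhausen without spelling out the reducible/connected-sum case that you carry over from the proof of Proposition \ref{prop:Formula}(1).
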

\begin{proof} 
The existence of the presentation $Q_0$ 
shows that the deficiency of $\pi_1 (E(L))$ is at least $1$. 
On the other hand, if it were greater than $1$, then 
$H_1 (E(L);\mathcal{K}_\Gamma)$ should be non-trivial, 
a contradiction. 
Therefore the deficiency of $\pi_1 (E(L))$ is $1$. Then 
Harlander-Jensen's theorem \cite[Theorem 3]{hj} shows that 
$X (Q_0)$ and $X (Q)$ are homotopy equivalent. 
In fact they are simple homotopy equivalent by 
Waldhausen's theorem \cite[Theorems 19.4, 19.5]{waldhausen1}. 
Hence 
\[\tau_\Gamma (E(L))=\tau_\Gamma (X(Q_0))=\tau_\Gamma (X(Q))\] 
holds. 
\end{proof}

Now we assume that $L$ is an $n$-component 
rationally homologically fibered link 
with a minimal genus Seifert surface $R$ of genus $g$. 
Let $M=(M,i_+,i_-) \in \mathcal{C}_{g,n}^\mathbb{Q}$ 
be a rational homology cylinder over $\Sigma_{g,n}$ 
obtained as the complementary sutured manifold for $R$. 
We take a basepoint $p$ of $M$ on a component of 
$i_{+}(\partial \Sigma_{g,n})=i_{-}(\partial \Sigma_{g,n})$ and 
a small segment $\mu_0 \subset \partial M$ 
which intersects with $i_\pm(\partial \Sigma_{g,n})$ 
at $p$ transversely. $\mu_0$ is oriented so that it goes across 
$i_\pm(\partial \Sigma_{g,n})$ from 
$i_+ (\Sigma_{g,n})$ to $i_-(\Sigma_{g,n})$. 
We may assume that $\mu_0$ defines a meridian loop 
$\mu \in \pi_1 (E(L))$ when we remake $E(L)$ from $M$. 
By the definition of a PTFA group, any meridian loop of at least 
one component of $L$ must satisfies 
$\rho_\Gamma (\mu) \neq 1 \in \Gamma$, and we choose such a $\mu$ 
by changing the basepoint if necessary. 

Consider the composition 
$\pi_1 (M) \to \pi_1 (E(L)) 
\xrightarrow{\rho_\Gamma} \Gamma$ to 
define $r_\Gamma (M)$ and $\tau_\Gamma^+ (M)$. 

\begin{theorem}\label{thm:factorization}
Under the above assumptions, we have 
\begin{align*}
\tau_{\Gamma}(E(L)) &=
\tau_{\Gamma}^+ (M) \cdot (I_{2g+n-1}- 
\rho_{\Gamma}(\mu) r_{\Gamma}(M)) 
\cdot (1-\rho_{\Gamma}(\mu))^{-1} \\
& \in K_1(\mathcal{K}_\Gamma)/\pm \Gamma.
\end{align*}
\end{theorem}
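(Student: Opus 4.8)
The plan is to build an explicit presentation of $\pi_1(E(L))$ from an admissible presentation of $\pi_1(M)$ and then compute $\tau_\Gamma(E(L))$ via Lemma~\ref{lem:simple}, matching the resulting Fox-matrix computation against the formulas for $\tau_\Gamma^+(M)$ and $r_\Gamma(M)$ from Proposition~\ref{prop:Formula}. First I would recall how $E(L)$ is reconstructed from the complementary sutured manifold $M$: one reglues $N(R,E(L))\cong \Sigma_{g,n}\times[0,1]$ to $M$ along $i_+(\Sigma_{g,n})\sqcup i_-(\Sigma_{g,n})$, which at the level of fundamental groups amounts to adjoining the meridian $\mu$ and imposing, for each $j$, the relation that conjugation by $\mu$ carries $i_-(\gamma_j)$ to $i_+(\gamma_j)$ (up to the appropriate orientation conventions fixed when $\mu_0$ was chosen to cross from the $+$ side to the $-$ side). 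Concretely, starting from an admissible presentation
\[\langle i_-(\gamma_1),\ldots,i_-(\gamma_{2g+n-1}),z_1,\ldots,z_l,i_+(\gamma_1),\ldots,i_+(\gamma_{2g+n-1})\mid r_1,\ldots,r_{2g+n-1+l}\rangle\]
of $\pi_1(M)$, I would produce the presentation
\[Q=\langle i_-(\gamma_\bullet),z_\bullet,i_+(\gamma_\bullet),\mu \mid r_\bullet,\ \mu\, i_+(\gamma_j)\,\mu^{-1}\, i_-(\gamma_j)^{-1}\ (1\le j\le 2g+n-1)\rangle\]
of $\pi_1(E(L))$, which has deficiency $1$, so Lemma~\ref{lem:simple} applies and $\tau_\Gamma(E(L))=\tau_\Gamma(X(Q))$.

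Next I would write down the Fox Jacobian of $Q$, applied under $\rho_\Gamma$, as a block matrix. Ordering the generators as $(i_-(\gamma_\bullet),z_\bullet,i_+(\gamma_\bullet),\mu)$ and the relators as $(r_\bullet,\,\text{new ones})$, the $\partial r_\bullet$ rows contribute exactly the blocks $A,B,C$ of the paper (together with a zero column under $\mu$, since the $r_\bullet$ do not involve $\mu$), while the new relators $w_j:=\mu\, i_+(\gamma_j)\,\mu^{-1}\,i_-(\gamma_j)^{-1}$ have Fox derivatives whose $\rho_\Gamma$-images are $\partial w_j/\partial i_-(\gamma_i)\mapsto -\delta_{ij}\rho_\Gamma(i_-(\gamma_j))^{-1}$ (after applying the involution $\overline{\phantom{x}}$ this becomes $-\delta_{ij}\rho_\Gamma(i_-(\gamma_j))$), $\partial w_j/\partial i_+(\gamma_i)\mapsto \delta_{ij}\rho_\Gamma(\mu)$ after the involution, $\partial w_j/\partial z_i\mapsto 0$, and $\partial w_j/\partial\mu\mapsto 1-\rho_\Gamma(\mu i_+(\gamma_j)\mu^{-1})=1-\rho_\Gamma(i_+(\gamma_j))$ (using that $\Gamma$ is abelian-by-... but more to the point that $\mu$ is central modulo the commutator, or simply computing directly). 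Then $\tau_\Gamma(X(Q))$ is, by the formula recalled before Lemma~\ref{lem:simple}, the determinant of the square matrix obtained by deleting the column corresponding to some generator $y_i$ with $\rho_\Gamma(y_i)\neq 1$; the natural choice is to delete the $\mu$-column, giving the block matrix
\[\begin{pmatrix}{}^{\rho_\Gamma}A & {}^{\rho_\Gamma}C\\ {}^{\rho_\Gamma}B & 0 \\ -D & \rho_\Gamma(\mu)I_{2g+n-1}\end{pmatrix},\]
where $D=\operatorname{diag}(\rho_\Gamma(i_-(\gamma_1)),\ldots)$ — but then one must correct by the denominator $(1-\rho_\Gamma(y_i^{-1}))^{-1}$ coming from the $C_1\to C_0$ differential, i.e. by $(1-\rho_\Gamma(\mu)^{-1})^{-1}$, and reconcile normalizations in $K_1(\mathcal{K}_\Gamma)/\pm\Gamma$.

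I would then finish by a Schur-complement / block-row-reduction calculation. Since $\begin{pmatrix}{}^{\rho_\Gamma}A\\ {}^{\rho_\Gamma}B\end{pmatrix}$ is invertible over $\mathcal{K}_\Gamma$ with determinant class $\tau_\Gamma^+(M)$ (Proposition~\ref{prop:Formula}(1)), I can use it to clear the bottom-left block, and the Schur complement of the lower-right block becomes $\rho_\Gamma(\mu)I - (-D)\begin{pmatrix}{}^{\rho_\Gamma}A\\ {}^{\rho_\Gamma}B\end{pmatrix}^{-1}{}^{\rho_\Gamma}C$ restricted appropriately; recognizing $-{}^{\rho_\Gamma}C\begin{pmatrix}{}^{\rho_\Gamma}A\\{}^{\rho_\Gamma}B\end{pmatrix}^{-1}\begin{pmatrix}I\\0\end{pmatrix}=r_\Gamma(M)$ from Proposition~\ref{prop:Formula}(2), and using centrality of $\rho_\Gamma(\mu)$ together with the fact that $D$ contributes only classes in $\pm\Gamma$ (so it is killed in the quotient $K_1(\mathcal{K}_\Gamma)/\pm\Gamma$), this Schur complement reduces to $\rho_\Gamma(\mu)I_{2g+n-1}\cdot(\rho_\Gamma(\mu)^{-1}I - r_\Gamma(M))\cdot(\text{unit})= -(I-\rho_\Gamma(\mu)r_\Gamma(M))\cdot(\text{unit})$, up to sign and elements of $\Gamma$. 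Multiplying the determinant of the reduced matrix — which is $\tau_\Gamma^+(M)$ times this Schur complement — by the differential's denominator $(1-\rho_\Gamma(\mu))^{-1}$ yields exactly
\[\tau_\Gamma(E(L))=\tau_\Gamma^+(M)\cdot(I_{2g+n-1}-\rho_\Gamma(\mu)r_\Gamma(M))\cdot(1-\rho_\Gamma(\mu))^{-1}\]
in $K_1(\mathcal{K}_\Gamma)/\pm\Gamma$. The main obstacle I expect is bookkeeping: getting the orientation conventions for $\mu_0$, the direction of the regluing relation $w_j$, the involution $\overline{\phantom{x}}$, and the row-deletion normalization all consistent so that the diagonal factor $D$ and the sign genuinely land in $\pm\Gamma$ and disappear — the algebraic Schur-complement manipulation itself is routine once the matrix is correctly assembled, and I would double-check it against the explicit rank-one case ($\mathcal{M}_{g,1}$, product cylinders) where $r_\Gamma$ is the classical Magnus matrix and the formula should reduce to Friedl's factorization of the twisted Alexander polynomial of a fibered knot.
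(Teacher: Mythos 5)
Your overall strategy is the same as the paper's: present $\pi_1(E(L))$ by adjoining the meridian $\mu$ to an admissible presentation of $\pi_1(M)$ together with the relators identifying $i_-(\gamma_j)$ with $\mu\, i_+(\gamma_j)\,\mu^{-1}$, invoke Lemma~\ref{lem:simple} to compute $\tau_\Gamma(E(L))$ from this deficiency-one presentation, and reduce the resulting Fox Jacobian using the invertibility of $\begin{pmatrix}A\\ B\end{pmatrix}$ and the identity $(r_\Gamma(M)\ \ Z)=-C\begin{pmatrix}A\\ B\end{pmatrix}^{-1}$ from Proposition~\ref{prop:Formula}. This is exactly how the paper proceeds, and with correct bookkeeping your computation does terminate in the stated formula.

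The one point that is genuinely off, and not merely deferred bookkeeping, is the evaluation of the Fox derivatives of the new relators and the way you propose to dispose of the diagonal factor $D$. Since $\rho_\Gamma$ is a homomorphism of $\pi_1(E(L))$, in which $\mu\, i_+(\gamma_j)\,\mu^{-1}i_-(\gamma_j)^{-1}=1$, the involuted $\rho_\Gamma$-images are $\overline{\partial w_j/\partial i_-(\gamma_i)}\mapsto-\delta_{ij}$ and $\overline{\partial w_j/\partial i_+(\gamma_i)}\mapsto\delta_{ij}\,\rho_\Gamma(\mu)^{-1}$, not $-\delta_{ij}\rho_\Gamma(i_-(\gamma_j))$ and $\delta_{ij}\rho_\Gamma(\mu)$; in other words $D$ is in fact the identity. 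This matters because $D$ does not split off as a separate determinant factor that dies in $K_1(\mathcal{K}_\Gamma)/\pm\Gamma$: it sits in the same columns used for your Schur complement, and with your literal entries the reduction yields $\tau_\Gamma^+(M)\cdot\det\bigl(I-D\,\rho_\Gamma(\mu)^{-1}r_\Gamma(M)\bigr)\cdot(\,\cdots)$, which is not the claimed formula (only units that factor out multiplicatively are killed in the quotient). With the corrected entries the deleted-row matrix is exactly
\[J_\mu=\begin{pmatrix}A & I_{2g+n-1}\\ B & 0\\ C & -\rho_\Gamma(\mu)^{-1}I_{2g+n-1}\end{pmatrix},\]
the row operation adding $\rho_\Gamma(\mu)$ times the third block row to the first gives $\begin{pmatrix}A+\rho_\Gamma(\mu)C\\ B\end{pmatrix}$ up to the unit $\det(-\rho_\Gamma(\mu)^{-1}I)\in\pm\Gamma$, and substituting $C=-(r_\Gamma(M)\ \ Z)\begin{pmatrix}A\\ B\end{pmatrix}$ produces $(I-\rho_\Gamma(\mu)r_\Gamma(M))\cdot\tau_\Gamma^+(M)$; the deleted-generator factor is $(1-\rho_\Gamma(\mu)^{-1})^{-1}=(1-\rho_\Gamma(\mu))^{-1}\cdot(-\rho_\Gamma(\mu))$, agreeing with your $(1-\rho_\Gamma(\mu))^{-1}$ modulo $\pm\Gamma$. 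Note also that no centrality of $\rho_\Gamma(\mu)$ is needed anywhere: one only multiplies block rows on the left by elements of $\Gamma$, which changes the $K_1$-class by elements of $\pm\Gamma$.
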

\begin{proof}
Given an admissible presentation of $\pi_1 (M)$ 
as in (\ref{admissible}), 
we denote it briefly by 
\[\pi_1 (M) \cong \langle i_- (\overrightarrow{\gamma}), 
\overrightarrow{z}, i_+ (\overrightarrow{\gamma}) \mid 
\overrightarrow{r} \rangle.\]
>From this, we can obtain a presentation $Q_1$ of 
$\pi_1 (E(L))$ given by 
\[\pi_1 (E(L)) \cong \langle i_- (\overrightarrow{\gamma}),
\overrightarrow{z},i_+ (\overrightarrow{\gamma}),\mu \mid 
\overrightarrow{r}, i_- (\overrightarrow{\gamma}) \, \mu \, 
i_+(\overrightarrow{\gamma})^{-1} \mu^{-1} \rangle.\]
Consider the 2-complex $X(Q_1)$ as before. 
The matrix 
\[J:=
\begin{pmatrix}
A & I_{2g+n-1} \\
B & 0_{(l,2g+n-1)} \\
C & -\rho_\Gamma (\mu)^{-1} I_{2g+n-1} \\
0_{(1,2g+n-1+l)} & \ast \ \ast \ \cdots \ \ast
\end{pmatrix}\]
represents the boundary map 
\[C_2(X(Q_1);\mathcal{K}_\Gamma) \cong \mathcal{K}_\Gamma^{4g+2n-2+l} 
\longrightarrow \mathcal{K}_\Gamma^{4g+2n-1+l} 
\cong C_1 (X(Q_1);\mathcal{K}_\Gamma),\]
where we use 
the above admissible presentation of $\pi_1 (M)$ to 
give the matrices $A$, $B$ and $C$ 
(recall Section \ref{sec:magnus}), and then 
apply $\rho_\Gamma$ to their entries for simplicity. 
 
By Lemma \ref{lem:simple}, we have 
\[\tau_\Gamma (E(L)) = \tau_\Gamma (X(Q_1)) 
= J_\mu \cdot (1-\rho_\Gamma(\mu)^{-1})^{-1},\]
where $J_\mu$ is obtained from $J$ 
by deleting the last row, and 
Then as elements 
in $K_1(\mathcal{K}_\Gamma)/\pm\rho_{\Gamma}(\pi_1 (E(L)))$, we have 

\begin{align*}
J_\mu &=
\begin{pmatrix}
A & I_{2g+n-1} \\
B & 0_{(l,2g+n-1)}\\
C & -\rho_\Gamma (\mu)^{-1} I_{2g+n-1}
\end{pmatrix}
=\begin{pmatrix}
A + \rho_\Gamma (\mu) C & 0_{2g+n-1} \\
B & 0_{(l,2g+n-1)}\\
C & -\rho_\Gamma (\mu)^{-1} I_{2g+n-1}
\end{pmatrix}\\
&=\begin{pmatrix}
A + \rho_\Gamma (\mu) C \\ B 
\end{pmatrix}
=\begin{pmatrix}
A \\ B
\end{pmatrix}
-\rho_\Gamma (\mu)
\begin{pmatrix}
r_{\Gamma} (M) \quad Z \\
0_{(l,2g+n-1+l)}
\end{pmatrix}
\begin{pmatrix}
A \\ B
\end{pmatrix}\\
&=\begin{pmatrix}
I_{2g+n-1} - \rho_\Gamma (\mu) r_{\Gamma} (M) & 
-\rho_\Gamma (\mu) Z \\
0_{(l,2g+n-1)} & I_l
\end{pmatrix}
\begin{pmatrix}
A \\ B
\end{pmatrix}\\
&=(I_{2g+n-1} - \rho_\Gamma (\mu) r_{\Gamma} (M)) 
\begin{pmatrix}
A \\ B 
\end{pmatrix}=
(I_{2g+n-1} - \rho_\Gamma (\mu) r_{\Gamma} (M)) \cdot 
\tau_\Gamma^+ (M),
\end{align*}
\noindent
where $Z$ is defined by the formula 
$(r_\Gamma(M) \quad Z)= 
-C \begin{pmatrix} A \\ B \end{pmatrix}^{-1}$ 
(see Proposition \ref{prop:Formula} (2)). 
This completes the proof. 
\end{proof}

\begin{example}
(1) Consider the homomorphism 
$\rho:\pi_1 (E(L)) \to \langle t \rangle$ 
at the beginning of this section. We have 
$t=\rho (\mu)$. 
It is easy to see that 
the composition $H_1 (M) \to H_1 (E(L)) \xrightarrow{\rho} 
\mathbb{Z}$ is trivial. 
Thus the matrices $\tau_{\langle t \rangle}^+ (M)$ and 
$r_{\langle t \rangle} (M)$ 
have their entries in $\mathbb{Q}$ and in fact 
$r_{\langle t \rangle} (M)=\sigma (M)$ holds. 
Then Theorem \ref{thm:factorization} together with 
Milnor's formula \cite[Section 2]{milnor2} give a factorization 
\begin{align*}\Delta_L (t) 
&= (1-t)\det (\tau_{\langle t \rangle} (E(L))) \\
&= \det (\tau_{\langle t \rangle}^+ (M)) \cdot \det 
\big(I_{2g+n-1} - t\sigma (M) \big)
\end{align*}
\noindent
of the (one variable) Alexander polynomial of $L$. 
This formula is essentially the same as 
(\ref{eq:factor}) 
in the proof of Proposition \ref{thm:homologicalfibre}. 

\noindent
(2) Let $\pi_1 (E(L)) \to H:= H_1 (E(L)) \cong \mathbb{Z}^n$ 
be the abelianization homomorphism for $n \ge 2$. 
In this case, Theorem \ref{thm:factorization} together with 
Milnor's formula give a factorization 
\begin{align*}
\Delta(L) &= \det (\tau_H (E(L))) \\
&=\frac{1}{1-\rho_H (\mu)} \cdot 
\det (\tau_H^+ (M)) \cdot 
\det \big( I_{2g+n-1} - \rho_\Gamma (\mu)r_H (M) \big) 
\end{align*}
\noindent
of the multivariable Alexander polynomial 
$\Delta (L)$ of $L$. 
\end{example}

More examples are given in 
\cite{gs10}, where we 
detect the non-fiberedness of the thirteen knots mentioned in 
Example \ref{ex:FK} by using the torsions associated with 
the metabelian quotients of their knot groups. 


\section{The handle number}\label{section:handle}

In this section, we review the handle number of 
a sutured manifold according to \cite{goda1,goda2}. 

A {\it compression body\/} is a cobordism $W$ relative to 
the boundary 
between surfaces $\partial_{+}W$ and $\partial_{-}W$ 
such that $W$ is diffeomorphic to 
$\partial_{+}W\times [0,1] \cup \text{ (2-handles) } 
\cup \text{ (3-handles)}$ and $\partial_{-}W$ has no 
2-sphere components. 
In this paper, we assume $W$ is connected.
If $\partial_{-}W=\emptyset$, $W$ is a handlebody. 
If $\partial_{-}W\neq\emptyset$, 
$W$ is obtained from $\partial_{-}W\times [0,1]$ 
by attaching a number of 1-handles along the disks 
on $\partial_{-}W\times\{ 1 \}$ 
where $\partial_{-}W$ corresponds to 
$\partial_{-}W\times\{ 0\}.$ 
We denote by $h(W)$ the number of these attaching 
1-handles.

Let $(M,\gamma)$ be a sutured manifold such that 
$R_{+}(\gamma)\cup R_{-}(\gamma)$ has no 2-sphere 
components. 
We say that $(W,W')$ is a {\it Heegaard splitting\/} 
of $(M,\gamma)$ if both $W$ and $W'$ are compression bodies, 
$M=W\cup W'$ with 
$W\cap W'=\partial_{+}W=\partial_{+}W',\, 
\partial_{-}W=R_{+}(\gamma),$ and $\partial_{-}W'=R_{-}(\gamma)$.

\begin{definition}\label{def:handle}
Assume that $R_{+}(\gamma)$ is diffeomorphic to $R_{-}(\gamma)$. 
We define the {\it handle number\/} of $(M,\gamma)$ as follows: 
\[h(M,\gamma)=\min\{h(W)(=h(W'))~|~(W,W') 
\text{ is a Heegaard splitting of }(M,\gamma)\}.\]
If $(M,\gamma)$ is the complementary sutured manifold 
for a Seifert surface $R$, we define 
\[h(R)=\min\{h(W)~|~(W,W') \text{ is a Heegaard splitting of }(M,\gamma)\},\] 
and call it the handle number of $R$.
\end{definition} 

If $(M,\gamma)$ is a product sutured manifold
then $h(M,\gamma)=0$, and vice versa.
For the behavior and some estimates of the handle number, 
see \cite{goda3,goda4}. 
Note that this invariant is closely related to the Morse-Novikov number 
for knots and links \cite{vpr}. 

Here we present an estimate of the handle number using the homology. 
For a sutured manifold $(M,\gamma)$, 
fix two diffeomorphisms 
$i_{\pm}:\Sigma_{g,n} \stackrel{\cong}{\to} R_{\pm}(\gamma)$ 
as in the previous sections. 
Suppose $M$ has a Heegaard splitting $(W,W')$ such that $h(W)=h$. 
Then, $M$ is diffeomorphic to a manifold obtained from 
$R_{+}(\gamma)\times [0,1]$ by attaching $h$ 1-handles 
and $h$ 2-handles. By considering the computation of 
$H_{1}(M,i_{+}(\Sigma_{g,n}))$ from this handle decomposition, 
we have 
\[h(M,\gamma)\ge p,\]
where $p$ is the minimum number of generators of 
$H_{1}(M, i_{+}(\Sigma_{g,n}))$. 
This estimate is effective in general (see \cite[Example 6.3]{goda2}), 
however not at all in case $(M,\gamma)$ is a homology cylinder. 
To obtain a method which works in that case, 
we consider a local coefficient system $\mathcal R$ of a ring on $M$. 
By the same argument as above, we have:

\begin{proposition}\label{prop:estimate}
$h(M,\gamma)$ is greater than or equal to 
the minimum number of elements generating 
$H_{1}(M,i_{+}(\Sigma_{g,n});\mathcal R)$ 
as an $\mathcal R$-module.
\end{proposition}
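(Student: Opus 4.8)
The plan is to mimic the argument that immediately precedes the statement, simply replacing the trivial coefficient system $\mathbb{Z}$ by the ring $\mathcal{R}$ throughout. First I would recall the set-up: fix a Heegaard splitting $(W,W')$ of $(M,\gamma)$ realizing the handle number, so $h=h(W)=h(W')$. As explained before the proposition, $M$ is then diffeomorphic to a manifold obtained from $R_+(\gamma)\times[0,1]$ (with $R_+(\gamma)\times\{0\}$ identified with $i_+(\Sigma_{g,n})$) by attaching $h$ one-handles and $h$ two-handles. This gives a relative CW-structure on the pair $(M,i_+(\Sigma_{g,n}))$ — up to homotopy, collapsing the product $R_+(\gamma)\times[0,1]$ — with exactly $h$ relative $1$-cells and $h$ relative $2$-cells, and no cells in other dimensions.

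Next I would pass to the local system. Let $\rho:\pi_1(M)\to \mathrm{Aut}(\mathcal{R})$ be the given local coefficient system of a ring $\mathcal{R}$ (acting, say, on the left, as in Section \ref{sec:magnus}), and form the relative twisted chain complex $C_\ast(M,i_+(\Sigma_{g,n});\mathcal{R})$ from the handle decomposition above. It has the form
\[
0 \longrightarrow \mathcal{R}^h \xrightarrow{\ \partial_2\ } \mathcal{R}^h \longrightarrow 0,
\]
concentrated in degrees $1$ and $2$, where the $h$ basis elements in degree $1$ are the relative $1$-cells coming from the attached one-handles. Taking homology in degree $1$, we obtain
\[
H_1(M,i_+(\Sigma_{g,n});\mathcal{R}) \;=\; \mathrm{coker}\bigl(\partial_2:\mathcal{R}^h \to \mathcal{R}^h\bigr),
\]
which is a quotient of $\mathcal{R}^h$ and hence is generated by at most $h$ elements as an $\mathcal{R}$-module. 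Since $h=h(M,\gamma)$ by our choice of Heegaard splitting, this shows that the minimal number of $\mathcal{R}$-module generators of $H_1(M,i_+(\Sigma_{g,n});\mathcal{R})$ is at most $h(M,\gamma)$, which is exactly the claimed inequality.

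The argument is essentially formal once the handle picture is in place, so there is no serious obstacle; the only point requiring a little care is the bookkeeping that the handle decomposition indeed contributes precisely $h$ relative $1$-cells and $h$ relative $2$-cells to the pair $(M,i_+(\Sigma_{g,n}))$ — that is, that attaching a one-handle and a two-handle to $R_+(\gamma)\times[0,1]$ each adds a single relative cell of the appropriate dimension and that the product piece contributes nothing relatively. This is standard Morse/handle theory, and it is exactly the same input used in the untwisted estimate stated just before the proposition; the coefficients $\mathcal{R}$ play no role in establishing the cell count, only in reading off the module of generators at the end.
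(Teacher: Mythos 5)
Your proposal is correct and follows exactly the route the paper intends: the paper's own justification is simply ``by the same argument as above,'' namely the untwisted handle-decomposition estimate, with $\mathbb{Z}$ replaced by the local coefficient system $\mathcal{R}$. Your explicit bookkeeping that the pair $(M,i_+(\Sigma_{g,n}))$ carries $h$ relative $1$-cells and $h$ relative $2$-cells, so that $H_1(M,i_+(\Sigma_{g,n});\mathcal{R})$ is a quotient of $\mathcal{R}^h$, is precisely the content the paper leaves implicit.
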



\section{A lower estimate of handle numbers of 
doubled knots by using Nakanishi index}\label{sec:nakanishi}

In this section, we give a lower estimate of handle numbers 
of genus one Seifert surfaces for doubled knots (\cite[page 20]{bz}) by 
using a machinery similar to the $\Gamma$-torsion. 

Let $\widetilde{K}$ be the knot in $S^1 \times D^2$ depicted 
in Figure \ref{fig:double1}, where 
$\widetilde{V}=S^1 \times D^2$ is supposed to be embedded in $S^3$ 
in a standard position. 
We denote by $\widetilde{\lambda}$ 
the standard longitude of $S^1 \times D^2$. 
Take a Seifert surface $\widetilde{R}$ of $\widetilde{K}$ 
as in the figure.

\begin{figure}[h]
\centering
\includegraphics[width=.45\textwidth]{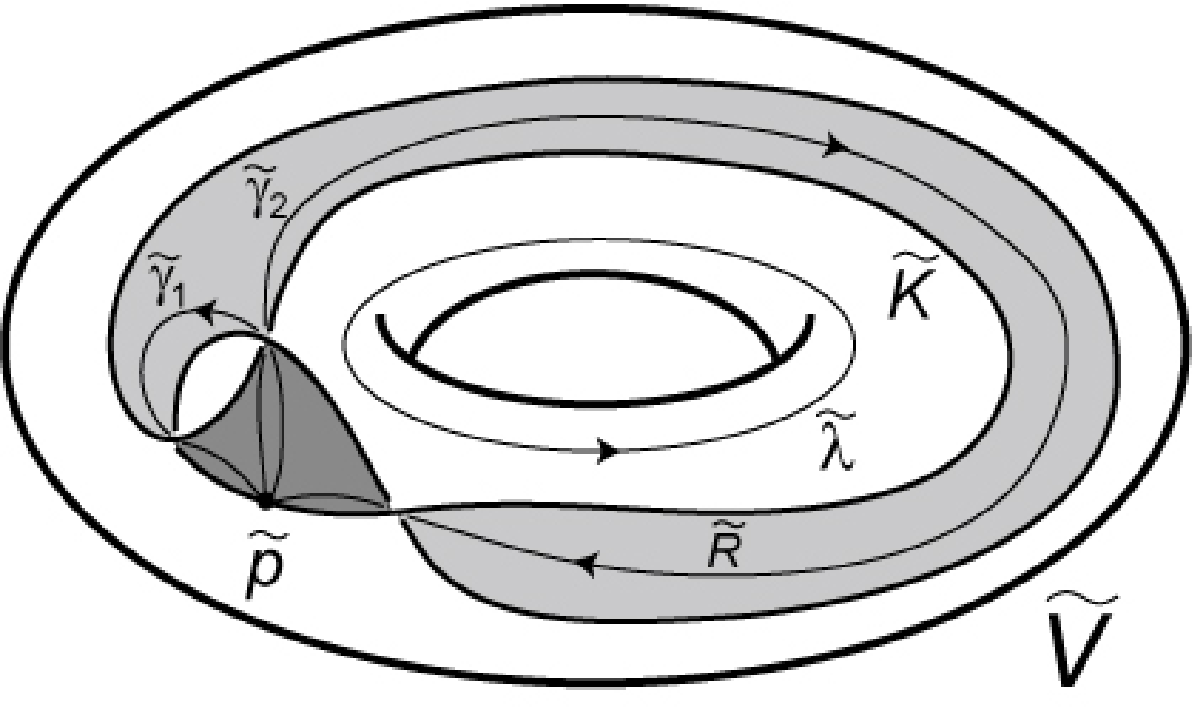}
\caption{The knot $\widetilde{K}$ in $S^1 \times D^2$}\label{fig:double1}
\end{figure}

For a knot $\widehat{K}$ 
(not necessarily homologically fibered) in $S^3$, we 
take a tubular neighborhood $N(\widehat{K})$ of $\widehat{K}$. 
Attaching $\widetilde{V}$ to ${\rm cl}(S^3-N(\widehat{K}))$, 
we obtain a doubled knot $K$ in $S^3$ 
with the Seifert surface $R$. 

If we attach $\widetilde{V}$ to ${\rm cl}(S^3-N(\widehat{K}))$
by gluing $\widetilde{\lambda}$ to the 0-framing of 
$\partial N(\widehat{K})$, 
then we have the Seifert surface $R$ whose Seifert matrix is the same as 
that of $\widetilde{R}$. Therefore, as seen in Example \ref{ex:gkm}, 
if $\widehat{K}$ is homologically fibered, 
so is $K$. 

\begin{proposition}\label{prop:nakanishi}
The handle number $h(R)$ of $R$ is greater than or equal to 
the Nakanishi index $m(\widehat{K})$ of $\widehat{K}$. 
\end{proposition}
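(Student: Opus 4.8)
The plan is to apply Proposition~\ref{prop:estimate} with a carefully chosen local coefficient system $\mathcal{R}$, and then match the resulting homological bound with the definition of the Nakanishi index. Recall that the Nakanishi index $m(\widehat{K})$ is the minimal size of a square presentation matrix of the Alexander module $H_1(E(\widehat{K})_\infty)=H_1(E(\widehat{K});\mathbb{Z}[t^{\pm 1}])$ over $\Lambda:=\mathbb{Z}[t^{\pm 1}]$, i.e.\ the minimal number of generators of this module over $\Lambda$. So the coefficient ring I want is $\mathcal{R}=\Lambda$, pulled back along the homomorphism $\pi_1(M)\to\pi_1(E(K))\to H_1(E(K))\cong\mathbb{Z}$ (sending the meridian to $t$), where $(M,\gamma)$ is the complementary sutured manifold for $R$. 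By Proposition~\ref{prop:estimate}, $h(R)=h(M,\gamma)$ is at least the minimal number of $\Lambda$-module generators of $H_1(M,i_+(\Sigma_{1,1});\Lambda)$, so it suffices to identify this relative twisted homology group (or at least bound its number of generators below) by $H_1(E(\widehat{K});\Lambda)$.

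The key computation is therefore to understand $H_1(M,i_+(\Sigma_{1,1});\Lambda)$ for the doubled-knot surface. First I would use the explicit structure from Section~\ref{section:fibre}: $M$ is built from $\widetilde{V}=S^1\times D^2$ attached to $\mathrm{cl}(S^3-N(\widehat{K}))$ after removing the product sutured neighborhood of $R$, and $R$ is the genus-one surface of Figure~\ref{fig:double1} with two bands, one of which is ``tied into'' $\widehat{K}$. Cutting along $R$ and analyzing $M$, one sees that $M$ deformation retracts onto (a piece built from) the exterior $E(\widehat{K})$ together with some 1- and 2-handles coming from the bands and the disk; concretely, the complement of $R$ in the doubled-knot exterior decomposes so that $E(\widehat{K})$ appears as an essential piece. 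The homomorphism to $\mathbb{Z}$ restricts on the $E(\widehat{K})$-piece to the meridian map of $\widehat{K}$, so the twisted chain complex of $(M,i_+(\Sigma_{1,1}))$ with $\Lambda$-coefficients, after excising the contractible/product part, computes $H_\ast(E(\widehat{K});\Lambda)$ up to a shift or up to summing with a free $\Lambda$-module of controlled rank. From this I would extract that $H_1(M,i_+(\Sigma_{1,1});\Lambda)$ surjects onto $H_1(E(\widehat{K});\Lambda)$ (or is isomorphic to it, modulo free summands that do not lower the generator count in the relevant way), hence its minimal number of $\Lambda$-generators is at least $m(\widehat{K})$.

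The main obstacle is precisely this identification step: making rigorous the claim that $E(\widehat{K})$ sits inside the complementary sutured manifold $M$ of the doubled-knot surface in a way that is compatible with the coefficient system, and controlling the ``extra'' cells (the handle coming from the second, unknotted band, the disk $D^2$, and the gluing region of $\widetilde{V}$) so that they contribute only free $\Lambda$-summands that cannot be used to reduce the generating set of $H_1(E(\widehat{K});\Lambda)$ itself. One clean way to handle this is to write down an admissible presentation of $\pi_1(M)$ adapted to the doubled-knot picture (as in the examples of Section~\ref{sec:magnus}), compute the Fox-derivative presentation matrix of $H_1(M,i_+(\Sigma_{1,1});\Lambda)$ over $\Lambda$, and observe that after elementary operations it contains a presentation matrix of the Alexander module of $\widehat{K}$ as a block, while the complementary block is invertible over $\Lambda$; then the minimal number of generators of the whole module is at least that of the Alexander-module block, which is $m(\widehat{K})$ by definition. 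Combining this with Proposition~\ref{prop:estimate} gives $h(R)\ge m(\widehat{K})$.
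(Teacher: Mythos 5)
Your choice of coefficient system does not work. You pull $\Lambda=\mathbb{Z}[t^{\pm 1}]$ back along $\pi_1(M)\to\pi_1(E(K))\to H_1(E(K))\cong\mathbb{Z}$, sending the meridian of the \emph{doubled} knot $K$ to $t$. But every loop in the complementary sutured manifold $M$ is disjoint from the Seifert surface $R$, hence has zero intersection number with $R$ and therefore linking number $0$ with $K$; so this composite homomorphism is trivial (this is exactly the observation made in Section \ref{sec:factorization} for the map $H_1(M)\to H_1(E(L))\xrightarrow{\rho}\mathbb{Z}$). The induced local system on $M$ is thus the trivial one, $H_1(M,i_+(\Sigma_{1,1});\Lambda)$ is just the untwisted relative homology tensored with $\Lambda$ (zero when $M$ is a homology cylinder), and Proposition \ref{prop:estimate} then gives no information about $\widehat{K}$. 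The variable that sees the Alexander module of $\widehat{K}$ is not the meridian of $K$ but the class $t$ of the Wirtinger meridians of $\widehat{K}$ inside $H_1(M)\cong\mathbb{Z}s\oplus\mathbb{Z}t$; the paper works with the full abelianization coefficients $\mathbb{Z}[s^{\pm},t^{\pm}]$ and only afterwards specializes $s\mapsto 1$. This homomorphism does not factor through $H_1(E(K))$, so it cannot be obtained the way you propose.

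There is also a gap in your final bookkeeping even if the correct ring is used. After the specialization, the Fox-derivative presentation matrix of $H_1(M,i_+(\Sigma_{1,1});\mathbb{Z}[t^{\pm}])$ is equivalent to the Alexander matrix of $\widehat{K}$ with \emph{one extra column} adjoined; hence this module is a quotient of the relative Alexander module $H_1(G_{\widehat{K}},\{1\};\mathbb{Z}[t^{\pm}])$ by one additional relation. Your claim that the whole module then needs at least as many generators as the ``Alexander-module block'' goes in the wrong direction: a quotient may need fewer generators, and the extra column has to be accounted for. What one actually gets is $e(H_1(G_{\widehat{K}},\{1\};\mathbb{Z}[t^{\pm}]))\le e(H_1(M,i_+(\Sigma_{1,1});\mathbb{Z}[t^{\pm}]))+1$, and the missing ingredient is Kawauchi's lemma applied to the exact sequence $0\to H_1(G_{\widehat{K}};\mathbb{Z}[t^{\pm}])\to H_1(G_{\widehat{K}},\{1\};\mathbb{Z}[t^{\pm}])\to\mathbb{Z}[t^{\pm}]\to 0$, which yields $e(H_1(G_{\widehat{K}},\{1\};\mathbb{Z}[t^{\pm}]))=m(\widehat{K})+1$; the two $+1$'s cancel and give $h(R)\ge m(\widehat{K})$. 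Without this step (and with the trivial coefficient system above), the proposed argument does not reach the stated bound.
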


\noindent
Recall that the {\it Nakanishi index} $m(\widehat{K})$ of 
a knot $\widehat{K}$ is the minimum size of 
square matrices representing $H_1 (G_{\widehat{K}};\mathbb{Z} [t^\pm])$ as 
a $\mathbb{Z} [t^\pm]$-module, where $G_{\widehat{K}}$ is the knot group of 
$\widehat{K}$ and 
$t$ is a generator of the abelianization of $G_{\widehat{K}}$. 
($H_1 (G_{\widehat{K}};\mathbb{Z} [t^\pm])$ is 
nothing other than the first homology group 
of the infinite cyclic cover of the knot exterior of $\widehat{K}$.) 
It is shown in Kawauchi \cite{kawauchi2} that 
\[m(\widehat{K}) =e(H_1 (G_{\widehat{K}};\mathbb{Z} [t^\pm])),\] 
where $e(A)$ of a $\mathbb{Z} [t^\pm]$-module $A$ 
is the minimal number of elements generating 
$A$ over $\mathbb{Z} [t^\pm]$.

\begin{proof}[Proof of Proposition $\ref{prop:nakanishi}$]
Since $h(R) \ge e(H_1 (M,i_+(\Sigma_{1,1});\mathbb{Z} [t^\pm]))$ 
by Proposition \ref{prop:estimate}, 
it suffices to show that 
$e(H_1 (M,i_+(\Sigma_{1,1});\mathbb{Z} [t^\pm])) \ge m(\widehat{K})$.

Let $\langle \widetilde{\gamma}_1, \widetilde{\gamma}_2 \rangle$ 
be a generating system of 
$\pi_1 (\widetilde{R},\widetilde{p})$ as in Figure \ref{fig:double1}.
We denote by $\gamma_{i}$ $(i=1,2)$ 
the image of $\tilde{\gamma}_{i}$ in $R$ and 
denote by $p$ the image of $\widetilde{p}$.
Further, we denote by $(M,\gamma)$ the complementary sutured manifold for $R$.
It is easy to see that 
a presentation of $\pi_1 (M,p)$ can be obtained by adding a generator 
$x$ to the Wirtinger presentation 
$\langle x_1, x_2, \ldots, x_l \mid r_1, \ldots, r_{l-1} \rangle$ 
of $G_{\widehat{K}}$ (with basepoint $p$) as shown in Figure \ref{fig:double2}.

\begin{figure}[h]
\centering
\includegraphics[width=.85\textwidth]{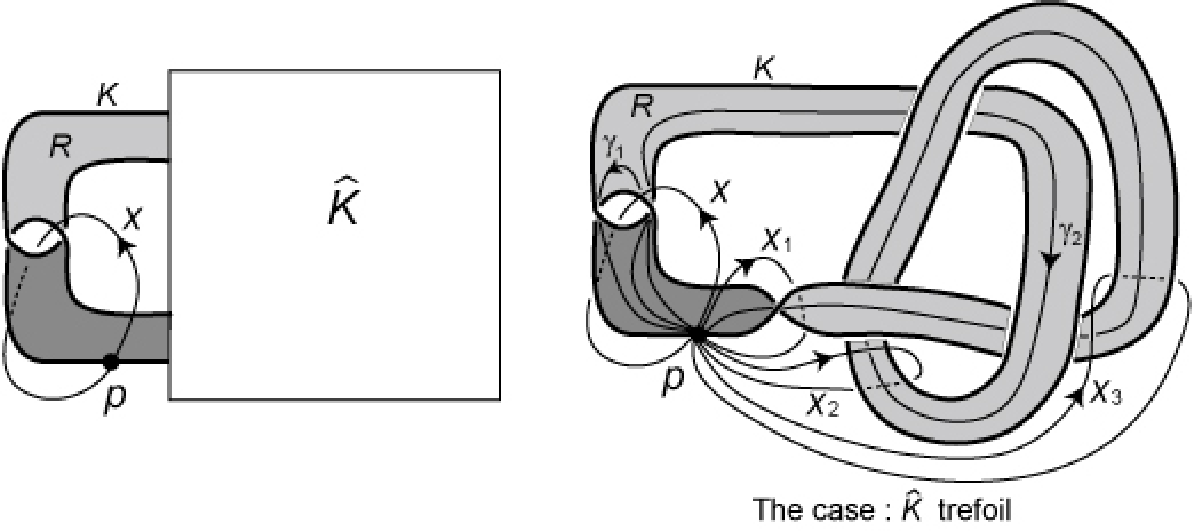}
\caption{Doubled knot}\label{fig:double2}
\end{figure}

>From these data, 
we can give an admissible presentation of $\pi_1 (M,p)$ as follows:
\[
\pi_{1}(M,p)\cong 
\left\langle
\begin{array}{c|c}
\begin{array}{l}
i_- (\gamma_1), i_-(\gamma_2),\\
x, x_1, x_2, \ldots, x_l,\\
i_+ (\gamma_1), i_+ (\gamma_2)
\end{array}&
\begin{array}{l}
i_-(\gamma_1)w_1 x, i_- (\gamma_2) w_2,\\
r_1, \ldots , r_{l-1},\\ 
i_+ (\gamma_1) x, i_+(\gamma_2)x w_3
\end{array}
\end{array}\right\rangle,\]
where $w_1, w_2, w_3$ are words in $x_1,\ldots,x_l$. 
The abelianization map 
$\rho: \pi_1 (M) \to H_1 (M)\cong \mathbb{Z}^2 = 
\mathbb{Z}s \oplus \mathbb{Z}t$ is 
given by 
\[x \mapsto s,\qquad x_1,x_2,\ldots,x_l \mapsto t.\]

A computation in matrices with entries in $\mathbb{Z} H_1 (M)
=\mathbb{Z}[s^\pm,t^\pm]$ shows that 
\[\sideset{^{\rho}\!\!}{}
{\mathop{\begin{pmatrix} A \\ B \\ C \end{pmatrix}}\nolimits}
=
\bordermatrix{
&i_-(\gamma_1)w_1 x & i_- (\gamma_2) w_2 & 
r_1 & \ldots & r_{l-1} & i_+ (\gamma_1) x & 
i_+(\gamma_2)x w_3 \cr 
i_-(\gamma_1)&1&0&0&\cdots&0&0&0\cr
i_-(\gamma_2)&0&1&0&\cdots&0&0&0\cr
x&\ast&0&0&\cdots&0&s&\ast\cr
x_1&\ast&\ast&a_{11}&\cdots&a_{1, l-1}&0&b_1\cr
\vdots&\vdots&\vdots&\vdots&\ddots&\vdots&\vdots&\vdots&\cr
x_l&\ast&\ast&a_{l1}&\cdots&a_{l, l-1}&0&b_l\cr
i_+(\gamma_1)&0&0&0&\cdots&0&1&0\cr
i_+(\gamma_2)&0&0&0&\cdots&0&0&1
},\]
where 
$a_{ij}=\overline{\displaystyle\frac{\partial r_j}{\partial x_i}}$ \ 
coincides with the $(i,j)$-entry (applied an involution) of the 
Alexander matrix with respect to 
the Wirtinger presentation of $G_{\widehat{K}}$, and 
$b_i= \overline{\displaystyle\frac{\partial(i_+(\gamma_2)x w_3)}
{\partial x_i}}$.

Recall that the matrix 
$\sideset{^{\rho}\!\!}{}
{\mathop{\begin{pmatrix} A \\ B \end{pmatrix}}\nolimits}$ 
gives a representation matrix of 
$H_1 (M,i_+(\Sigma_{1,1});\mathbb{Z}[s^\pm,t^\pm])$. 
As a representation matrix, 
$\sideset{^{\rho}\!\!}{}
{\mathop{\begin{pmatrix} A \\ B \end{pmatrix}}\nolimits}$ 
is equivalent to 
\[\begin{pmatrix}
a_{11}&\cdots&a_{1, l-1}&b_1\\
\vdots&\ddots&\vdots&\vdots\\
a_{l1}&\cdots&a_{l, l-1}&b_l
\end{pmatrix}.\]
Therefore, if we apply the natural map 
$\mathbb{Z} [s^\pm, t^\pm] \to 
\mathbb{Z} [t^\pm]$ $(s \mapsto 1)$ to each entry, 
we have an exact sequence
\[\mathbb{Z}[t^\pm] \longrightarrow 
H_1 (G_{\widehat{K}},\{1\};\mathbb{Z}[t^\pm]) 
\longrightarrow H_1 (M,i_+(\Sigma_{1,1});\mathbb{Z} [t^\pm]) 
\longrightarrow 0,\] 
which shows that 
\begin{align}\label{ineq1}
e(H_1 (G_{\widehat{K}},\{1\};\mathbb{Z}[t^\pm])) 
\le e(H_1 (M,i_+(\Sigma_{1,1});\mathbb{Z} [t^\pm])) +1.
\end{align}
\noindent
(Recall that the Alexander matrix of $\widehat{K}$ is a presentation matrix of 
$H_1 (G_{\widehat{K}},\{1\};\mathbb{Z}[t^\pm])$.)

In the homology exact sequence
\[0 \longrightarrow H_1 (G_{\widehat{K}};\mathbb{Z}[t^\pm]) 
\longrightarrow H_1 (G_{\widehat{K}},\{1\};\mathbb{Z}[t^\pm]) \longrightarrow 
H_0 (\{1\};\mathbb{Z}[t^\pm]) \longrightarrow 
H_0(G_{\widehat{K}};\mathbb{Z}[t^\pm]),\]
the fourth map is given by the augmentation map
\[H_0 (\{1\};\mathbb{Z}[t^\pm]) \cong \mathbb{Z}[t^\pm] 
\longrightarrow \mathbb{Z} \cong H_0(G_{\widehat{K}};\mathbb{Z}[t^\pm]), 
\qquad (t \mapsto 1),\]
whose kernel is $(t-1) \mathbb{Z}[t^\pm] \cong \mathbb{Z}[t^\pm]$, 
a free $\mathbb{Z}[t^\pm]$-module. 
Hence, we obtain an exact sequence
\[0 \longrightarrow H_1 (G_{\widehat{K}};\mathbb{Z}[t^\pm]) 
\longrightarrow H_1 (G_{\widehat{K}},\{1\};\mathbb{Z}[t^\pm]) \longrightarrow 
\mathbb{Z}[t^\pm] \longrightarrow 0.\]
Then, by \cite[Lemma 2.5]{kawauchi2}, we have 
\begin{equation}\label{ineq2}
e(H_1 (G_{\widehat{K}},\{1\};\mathbb{Z}[t^\pm])) 
= e(H_1 (G_{\widehat{K}};\mathbb{Z}[t^\pm])) +1 = m(\widehat{K}) +1.
\end{equation}
\noindent
The conclusion follows from (\ref{ineq1}) and (\ref{ineq2}). 
\end{proof}

\begin{corollary}
There exist homologically fibered knots 
having Seifert surfaces of genus 1 with arbitrarily large handle number.
\end{corollary}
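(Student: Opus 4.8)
The plan is to feed suitable companion knots into the doubling construction of Section~\ref{sec:nakanishi} and invoke Proposition~\ref{prop:nakanishi}. That proposition gives $h(R)\ge m(\widehat{K})$ for the genus one Seifert surface $R$ of the doubled knot $K$ obtained from a companion $\widehat{K}$, and the construction shows that $K$ is homologically fibered whenever $\widehat{K}$ is. Hence it suffices to exhibit a sequence $\widehat{K}_1,\widehat{K}_2,\ldots$ of homologically fibered knots with $m(\widehat{K}_n)\to\infty$, and then $K_n$ and its genus one surface $R_n$ do the job.

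First, I would take $\widehat{K}_n$ to be the connected sum of $n$ copies of the trefoil. The trefoil is fibered, and a connected sum of fibered knots is fibered, so each $\widehat{K}_n$ is fibered and in particular homologically fibered; consequently the associated doubled knot $K_n$ is homologically fibered and carries a genus one Seifert surface $R_n$, and $h(R_n)\ge m(\widehat{K}_n)$ by Proposition~\ref{prop:nakanishi}.

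Next I would bound $m(\widehat{K}_n)$ from below. The Alexander module $H_1(G_{\widehat{K}_n};\mathbb{Z}[t^\pm])$ of a connected sum is the direct sum of the Alexander modules of the summands, and that of the trefoil is the cyclic module $\mathbb{Z}[t^\pm]/(t^2-t+1)$; hence $H_1(G_{\widehat{K}_n};\mathbb{Z}[t^\pm])\cong\bigl(\mathbb{Z}[t^\pm]/(t^2-t+1)\bigr)^{\oplus n}$. Applying the ring homomorphism $\mathbb{Z}[t^\pm]\to\mathbb{F}_3$ with $t\mapsto -1$, which kills $t^2-t+1$ because $t^2-t+1=(t+1)^2-3t$, turns this module into the $n$-dimensional $\mathbb{F}_3$-vector space $\mathbb{F}_3^{\oplus n}$; since the minimal number of generators cannot increase under such a base change, $m(\widehat{K}_n)=e\bigl(H_1(G_{\widehat{K}_n};\mathbb{Z}[t^\pm])\bigr)\ge n$. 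Therefore $h(R_n)\ge n$, and letting $n$ grow produces homologically fibered knots with genus one Seifert surfaces of arbitrarily large handle number.

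I do not expect a serious obstacle here: the geometric content is entirely packaged in Proposition~\ref{prop:nakanishi}, the fiberedness of $\widehat{K}_n$ is the classical behaviour of fiber surfaces under connected sum, and the only point requiring a moment's care is the inequality $m(\widehat{K}_n)\ge n$, for which one just needs a field quotient of $\mathbb{Z}[t^\pm]$ through which $t^2-t+1$ becomes zero (here $\mathbb{F}_3$; the cyclotomic field $\mathbb{Q}(\zeta_6)$ would serve equally well, via $H_1(G_{\widehat{K}_n};\mathbb{Z}[t^\pm])\otimes\mathbb{Q}(\zeta_6)\cong\mathbb{Q}(\zeta_6)^{\oplus n}$).
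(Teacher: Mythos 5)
Your proof is correct and takes essentially the paper's own route: form the doubled knot with its genus one Seifert surface and apply Proposition \ref{prop:nakanishi} to companions of arbitrarily large Nakanishi index. The only difference is that the paper simply cites the known existence of such companions, whereas you exhibit an explicit family (connected sums of $n$ trefoils) and correctly verify $m(\widehat{K}_n)\ge n$ by passing from $\bigl(\mathbb{Z}[t^{\pm}]/(t^2-t+1)\bigr)^{\oplus n}$ to $\mathbb{F}_3^{\oplus n}$ via $t\mapsto -1$.
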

\begin{proof}
It is known that there exist knots with arbitrarily large 
Nakanishi index. 
Our claim follows by combining this fact with 
Proposition \ref{prop:nakanishi}. 
\end{proof}

\begin{example}
We present an example which shows the estimate of
Proposition \ref{prop:nakanishi} is sharp.

Let $\widehat{K}$ be the pretzel knot $P(3,-3,3)=9_{46}$.
The Nakanishi index of $\widehat{K}$ is 2 from the list in \cite{kook}.  
Let $K$ be a doubled knot along $\widehat{K}$ 
and let $\tau_{1}$ and $\tau_{2}$ 
(resp. $\tau'_{1}$ and $\tau'_{2}$) be 
the arcs whose ends are in $+$-side (resp. $-$-side) 
of the Seifert surface $R$ as illustrated 
in Figure \ref{fig:double10}.

\begin{figure}[h]
\centering
\includegraphics[width=.8\textwidth]{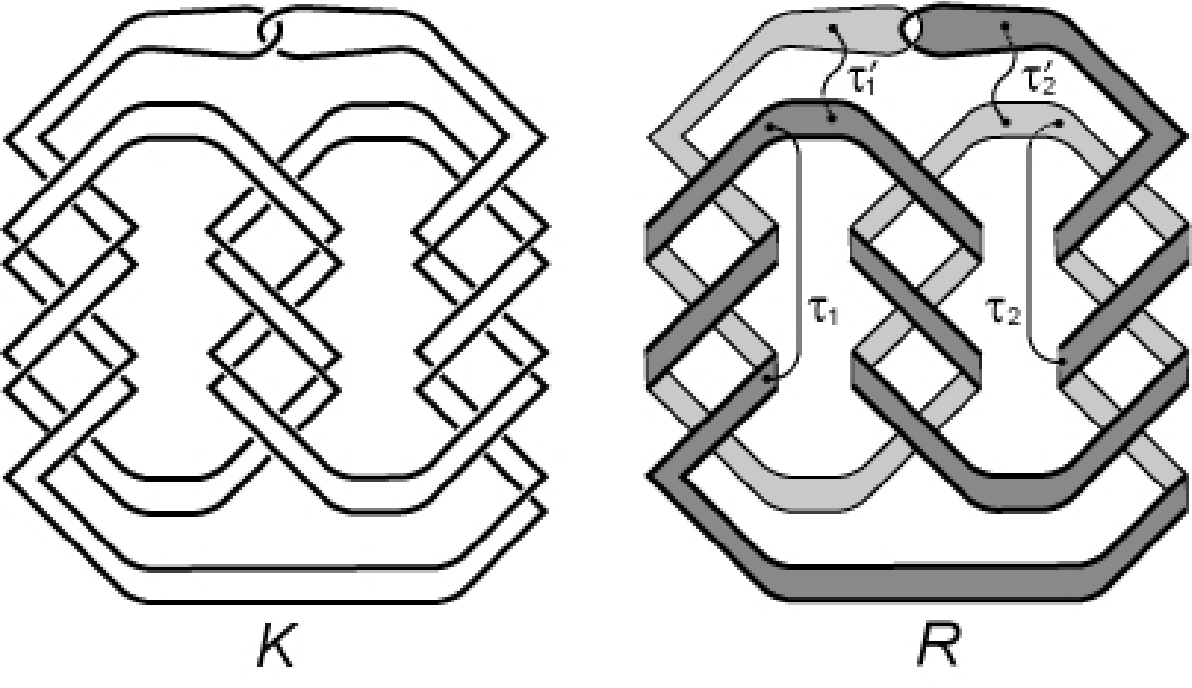}
\caption{Doubled knot $K$ obtained from $P(3,-3,3)$}\label{fig:double10}
\end{figure}

Let $(M,\gamma)$ be the complementary sutured manifold for $R$. 
Then we can observe that $({\rm cl}(M-N(\tau_{1}\cup\tau_{2} 
\cup\tau'_{1}\cup\tau'_{2})),\gamma)$, say $(\check{M},\gamma)$,  
is also a sutured manifold.
Furthermore, we can show that $(\check{M},\gamma)$ is 
a product sutured manifold 
by using the technique of product decompositions (see Gabai \cite{gabai2}).
This means that 
$(M,\gamma)$ has a Heegaard splitting $(W,W')$ 
such that  $h(W)=h(W')=2$ where 
$\tau_{1}$ and $\tau_{2}$ (resp. $\tau'_{1}$ and $\tau'_{2}$) 
correspond to 
the attaching 1-handles of $W$ (resp. $W'$).
Thus we have $h(R)\le 2$. 
(See \cite{goda4} for the detail of this technique.)
Therefore we have $h(R)=2$ by Proposition \ref{prop:nakanishi}.
Note that the Alexander polynomial of $K$ is equal to 
$t^2-t+1$, namely $K$ is homologically fibered. 
\end{example}


\bibliographystyle{amsplain}

\end{document}